\newtheorem{theorem}{Theorem}[section]
\newtheorem{proposition}[theorem]{Proposition}
\newtheorem{corollary}[theorem]{Corollary}
\newtheorem{lemma}[theorem]{Lemma}
\theoremstyle{remark}
\newtheorem{remark}[theorem]{Remark}
\newtheorem{example}[theorem]{Example}
\theoremstyle{definition}
\newtheorem{definition}[theorem]{Definition}
\newcommand{\VBonf}{V_{\mbox{\tiny Bonf}}}
\newcommand{\VSimes}{V_{\mbox{\tiny Simes}}}
\newcommand{\VDKW}{V_{\mbox{\tiny DKW}}}%{V_{\mbox{\tiny DKW}}}
\newcommand{\Vhyb}{V_{\mbox{\tiny hybrid}}}
\newcommand{\Rfam}{\mathfrak{R}}
\newcommand{\con}{\texttt{con}}
\newcommand{\len}{\texttt{len}}
\renewcommand{\P}{\mathbb{P}}
\newcommand{\E}{\mathbb{E}}
\newcommand{\telque}{\,:\,}
\newcommand{\mtc}{\mathcal}
\newcommand{\wt}[1]{{\widetilde{#1}}}
\newcommand{\ol}[1]{\overline{#1}}
\newcommand{\ind}[1]{{\mathds{1}\left\{#1\right\}}}
\newcommand{\paren}[1]{\left(#1\right)}
\newcommand{\set}[1]{\left\{#1\right\}}
\newcommand{\abs}[1]{\left| #1 \right|}
\newcommand{\cH}{{\mtc{H}}}
\newcommand{\cP}{{\mtc{P}}}
\newcommand{\cN}{{\mtc{N}}}
\renewcommand{\l}{\ell}
\newcounter{nbdrafts}
\newcommand{\checknbdrafts}{
\ifnum \thenbdrafts > 0
\@latex@warning@no@line{**********************************************************************}
\@latex@warning@no@line{* The document contains \thenbdrafts \space draft note(s)}
\@latex@warning@no@line{**********************************************************************}
\fi}
\newcommand{\Nm}{\mathbb{N}_m}
\newcommand{\Vtilde}{\wt{V}_{\Rfam}}
\newcommand{\Vtildeq}{\wt{V}^q_{\Rfam}}
\newcommand{\Vbar}{\ol{V}_{\Rfam}}
\newcommand{\setK}{\mathcal{K}}
\newcommand{\setC}{\mathcal{C}}
\newcommand{\N}[1]{\mathbb{N}_{#1}}
\newcommand{\comp}[1]{{#1}^{\mathsf{c}}} %pour le complÃÂ©mentaire avec un joli "c"
\newcommand{\Pro}[1]{\mathbb{P}\left(#1\right)}
\newcommand{\Esp}[1]{\mathbb{E}\left[ #1 \right]}
\newcommand{\ti}{{i'}}%{\tilde\imath}
\newcommand{\tj}{{j'}}%{\tilde\jmath}
\newcommand{\tk}{{k'}}%{\tilde k}
\newcommand{\h}{\phi}
\newcommand{\Rf}[2]{P_{#1:#2} }
\begin{document}

\begin{frontmatter}

%\title{Improved post hoc bounds for localized signal}
\title{Post hoc false positive control for spatially structured hypotheses}
\runtitle{Post hoc false positive control}
\begin{aug}
\author{\fnms{Guillermo} \snm{Durand}%\thanksref{t3,t1}
\ead[label=e1]{guillermo.durand@upmc.fr}}
\address{%Etienne Roquain\\
Sorbonne Universit\'e (Universit\'e Pierre et Marie Curie), LPSM,\\ 4, Place Jussieu, 75252 Paris cedex 05, France\\
\printead{e1}\\
% \phantom{E-mail: guillermo.durand@upmc.fr}
}

%\and

\author{\fnms{Gilles} \snm{Blanchard}%\thanksref{t2,t1}
\ead[label=e2]{gilles.blanchard@math.uni-potsdam.de}}
\address{%Gilles Blanchard\\
Universit\"at Potsdam, Institut f\"ur Mathematik\\
Karl-Liebknecht-Stra{\ss}e 24-25 14476 Potsdam, Germany\\
\printead{e2}\\
%\phantom{E-mail: gilles.blanchard@math.uni-potsdam.de\ }
}

%\and
\author{\fnms{Pierre} \snm{Neuvial}\ead[label=e3]{pierre.neuvial@math.univ-toulouse.fr}}
\address{
Institut de Math\'ematiques de Toulouse; \\
UMR 5219, Universit\'e de Toulouse, CNRS\\
UPS IMT, F-31062 Toulouse Cedex 9, France\\
\printead{e3}\\
% \phantom{Email: pierre.neuvial@math.univ-toulouse.fr\ }
}
%\and
\author{\fnms{Etienne} \snm{Roquain}%\thanksref{t3,t1}
\ead[label=e4]{etienne.roquain@upmc.fr}}
\address{%Etienne Roquain\\
Sorbonne Universit\'e (Universit\'e Pierre et Marie Curie), LPSM,\\ 4, Place Jussieu, 75252 Paris cedex 05, France\\
\printead{e4}\\
% \phantom{E-mail: etienne.roquain@upmc.fr }
}

\end{aug}
\begin{abstract}
In a high dimensional multiple testing framework, we present new confidence bounds on the false positives contained in subsets $S$ of selected null hypotheses. The coverage probability holds simultaneously over all %the selected variable
 subsets $S$, which means that the obtained confidence bounds are post hoc. Therefore,  $S$ can be chosen arbitrarily, possibly by using the data set several times.
We focus in this paper specifically on the case where the null hypotheses are spatially structured.
%the active variable set is locally structured, in the sense that the alternatives are gathered in some unknown clusters.
Our method is based on recent advances in post hoc inference and particularly on % by elaborating upon 
the general methodology of \citet{blanchard2017post}; we build confidence bounds for some pre-specified forest-structured subsets $\{R_k,k\in \mathcal{K}\}$, called the reference family, and then we deduce a bound for any subset $S$ by interpolation.
The proposed bounds are shown to improve substantially previous ones when 
the signal is locally structured. Our findings are supported both by theoretical results and numerical experiments.
Moreover, we show that our bound can be obtained by a low-complexity algorithm, which makes our approach completely operational for a practical use. The proposed bounds are implemented in the  open-source \textsf{R} package \texttt{sansSouci}\footnote{available from \url{https://github.com/pneuvial/sanssouci}.}.
\end{abstract}

\begin{keyword}[class=AMS]
\kwd[Primary ]{ 62G10}
\kwd[; secondary ]{62H15}
\end{keyword}

\begin{keyword}
\kwd{post hoc inference}\kwd{selective inference}\kwd{multiple testing}\kwd{Simes inequality}\kwd{Forest structure}\kwd{DKW inequality}
\end{keyword}

\end{frontmatter}

%\tableofcontents

\section{Introduction}

\subsection{Background}

Modern statistical data analysis often involves asking many questions of interest simultaneously, possibly using the data repeatedly, as long as the user feels that this could provide additional information. To avoid selection bias due to
various forms of data snooping, specific strategies can be proposed to take
into account the procedure as whole, and be investigated as to the statistical
guarantees they provide. This problem is often referred to as selective inference, a long standing research field, with a recent renewal of interest. An historical reference is the work of \citet{scheffe1953method} (see also \citealp[p. 69]{scheffe1959analysis}), which is to our knowledge the earliest work proposing simultaneous selective %\todo{ajouter *selective* ?? car sinon y a e.G. Bonferroni}
 inference.
%For instance, this yields to simultaneous confidence intervals, that are valid after any model selection step.
 In the context of linear regression,  \citet{berk2013valid} proposed an improvement of this Scheff\'e protection by defining a less conservative correction term (the so-called PoSI constant), see also \citet{bachoc2014valid,bachoc2018post} for recent developments on this issue.

Other strategies perform inference on the observed selection set only, either by a false coverage rate control \citep{benjamini2005false,benjamini2014selective}
or by a controlling a criterion conditional to a specific initial selection step, see the series of works \citet{fithian2014optimal,taylor2015statistical,tibshirani2016exact,choi2017selecting,taylor2018post}. In other studies, the selection step is based on sample splitting, see \citet{cox1975note,buhlmann2014high, dezeure2015high}, which is another way to tackle selective inference by explicitly avoiding data reuse.

%has known a renewal of interest recently with the emergence of large data sets and the problem of false discoveries in science.
%is relatively new and is close in spirit to more long standing research areas as variable selection or multiple testing.
%\todo{add references}

%
%Relation to previous work: DKW for estimating $\pi_0$ approach is similar (yet not exactly the same) as the one of  which should be
%$$
%\frac{\#\{i\::\:p_i>t\}+m^{1/2} \sqrt{(\log (2/\alpha))/2}}{1-t}
%$$
%so our bound is a little improvement? (because we solve a second degree equation)
%Anyway, not the point because we want to combine

%\subsection{Post hoc bound}

We follow in this paper the aim of establishing confidence bounds on the number of false positives in a multiple testing framework,
simultaneously over all possible set of selected hypotheses.
If we observe a random variable $X\sim P$, $P$ belonging to some model $\mtc{P}$, for which $m$ null hypotheses $H_{0,i} \subset \mtc{P} $, $i \in \Nm = \set{1,\ldots,m}$ are
under investigation for $P$, the aim is to build
a function ${V}(X,\cdot) : S \subset \Nm \mapsto {V}(X,S)$ (denoted by ${V}(S)$ for short) satisfying %\todo{why not $\forall P$ in (1)?}
\begin{equation}\label{aim}
%\text{for all }
\forall P\in\mtc{P},\, \qquad \P_{X\sim P}\Big(\forall S \subset %\{1,\dots,m\}
\Nm ,\:
|S\cap \cH_0(P)| \leq {V}(S) \Big)\geq 1-\alpha,
\end{equation}
where $\cH_0(P)=\{i \in \Nm \telque P \mbox{ satisfies } H_{0,i}\}$ is the set of true null hypotheses.
The bound $V(\cdot)$ will be referred to as a post hoc bound throughout this manuscript.

The problem of constructing post hoc bounds has been first tackled specifically in the case where the selection sets $S$ are of the form of $p$-value level sets: $\{i\::\:p_i(X)\leq t\}$, $t\in[0,1]$, where each $p_i(X)$ is a $p$-value for the null hypothesis $H_{0,i} $, $1\leq i\leq m$. The resulting bounds are often referred to as confidence envelopes, see \citet{genovese2004stochastic,meinshausen2006false}.
% Later, \citet{genovese2006exceedance} proposed to extend this approach to arbitrary subsets $S$, by a so-called inversion method.
%In \citet{goeman2011multiple}, this task is extensively discussed, with an angle towards the closed testing methodology.
Later, \citet{genovese2006exceedance} and \citet{goeman2011multiple}  proposed to extend this approach to arbitrary subsets $S$, by using a methodology based on performing $2^m-1$ local tests (one for each intersection hypothesis), with a possible complexity reduction by using shortcuts.
In particular, the approach of \citet{goeman2011multiple} extensively relies on the closed testing principle, which was 
introduced by \cite{marcus1976closed}. 
%\textcolor{magenta}{
This approach has been further extended in \citet{meijer2015multiple,meijer2015region} by using the sequential rejection principle of \citet{goeman2010sequential}. This allows to incorporate structural informations  into the post hoc bound. 
In particular, the method in \citet{meijer2015region}, whose goal inspired the present work, deals with geometrically structured null hypotheses, along space or time and shows that incorporating such an external information can substantially improve the detection of signal and thus can increase the accuracy of the resulting post hoc bound.
%This motivates use to revisit , our method is markedly different, as we discuss in Section~\ref{sec:compaGoeman}. 
%}
%\todo{This approach has then been extended to more general sets of intersection hypotheses in \citet{meijer2015region} and \citet{meijer2015multiple}.}
%been sketched in \citet{meinshausen2006false,genovese2006exceedance} and has been introduced more extensively in \citet{goeman2011multiple}, with an angle towards closed testing.

%\textcolor{magenta}{
%Our approach follows the recent methodology introduced in \citet{blanchard2017post},
%}
More recently, \citet{blanchard2017post} (BNR below) have proposed a flexible methodology
that adjusts the complexity of the bound by way of a reference family:
the post hoc bound is based on a family
  $\Rfam=((R_k(X),\zeta_k(X))_{k\in\setK}$ ($R_k,\zeta_k$ for short), with $R_k\subset \Nm$ (and $R_k\neq R_{\tk}$ if $k\neq\tk$), $\zeta_k\in \mathbb{N}$,
that satisfies the following joint error rate (JER) control:
  \begin{equation}
%\text{For all }
\forall P \in \cP, \qquad
 \qquad \P_{X\sim P}\Big(\forall k  \in \setK ,\:
|R_k\cap \cH_0(P)| \leq \zeta_k \Big)\geq 1-\alpha,
\label{eq:jercontrol}
\end{equation}
An important difference between \eqref{aim} and \eqref{eq:jercontrol} is that $S$ in \eqref{aim} is let arbitrary and typically chosen by the user, whereas $R_k,\zeta_k$ in \eqref{eq:jercontrol} is part of the  methodology and is chosen by the statistician to make \eqref{eq:jercontrol} hold.
Once the reference family is fixed, a post hoc bound is obtained from \eqref{eq:jercontrol} simply by interpolation, by exploiting the constraints that the event in \eqref{eq:jercontrol} imposes to the unknown set $\cH_0(P)$, namely that it is a subset $A$ with the property "$\forall k  \in \setK ,\:
|R_k\cap A| \leq \zeta_k$":
\begin{equation}\label{eq:optbound1}
  V^*_\Rfam(S) = \max\big\{ \abs{S\cap A}, A \subset \Nm, \forall k  \in \setK ,\:
|R_k\cap A| \leq \zeta_k \big\}, \:\:\: S\subset \Nm\,.
\end{equation}
Hence, if \eqref{eq:jercontrol} holds, then $ V=V^*_\Rfam$ satisfies \eqref{aim}. This post-hoc bound will be referred to as the {\it optimal bound} (relative
to a given reference family).

\subsection{Contributions of the paper}

%\textcolor{magenta}{
In this paper, we propose new post hoc bounds that 
incorporate the specific spatial structure of the null hypotheses. While this aim is similar in spirit to 
\citet{meijer2015region}, our method is markedly different, as it relies on the general strategy laid down by BNR, with a specifically structured reference family $R_k, k\in \setK$ (see Section~\ref{sec:compaGoeman} for a comparison between our approach and the one of \citealp{meijer2015region}). In addition, 
the way the method is built here is different than the one proposed in Section~3-6 of BNR:
%the general philosophy of our approach is slightly different from BNR:
%}
%In this paper, we propose a new type of post hoc bound %that improve the state of the art for locally structured null hypotheses, by choosing a
%based on a reference family $R_k, k\in \setK$ with a specific structure.
%While our work relies on the methodology of BNR, the philosophy is different, 
%as
 the main focus in BNR is the case of (random) reference sets $R_k=R_k(X)$ that are designed in order to satisfy \eqref{eq:jercontrol} with  $\zeta_k=k-1$ (thus corresponding to a ``joint $k$-family-wise error rate''). By contrast, in the present work the reference sets $R_k$ are fixed in advance, and the (random) bounds on the number false positives $\zeta_k=\zeta_k(X)$ are designed to satisfy the constraint \eqref{eq:jercontrol}. %, e.g., having a binary tree structure. In that case, contrary to what is done in BNR where $\zeta_k$ is suggested to be taken equal to $k-1$, the reference family is deterministic and the  $\zeta_k(X)$ should be chosen to satisfy the constraint \eqref{eq:jercontrol}.
%In our work, each confidence bound $\zeta_k(X)$ is calibrated via the DKW inequality, and the uniformity over $k\in \setK$ is ensured via an union bound.
The rationale behind this approach is that the reference sets $R_k$ can be chosen arbitrarily by the statistician, so that it can accommodate any pre-specified structure (reflecting some prior knowledge on the considered problem). Since we are interested in structured signal, we focus on a reference family enjoying a forest structure, meaning that two reference sets are either disjoint or nested. 

The second ingredient of our method is the local bounds $\zeta_k(X)$, that should estimate $|R_k\cap \cH_0(P)|$ with a suitable deviation term. While any deviation inequality can be used, we have chosen to focus on the DKW inequality \citep{dvoretzky1956asymptotic}, that has the advantage to be sub-Gaussian. Hence, the uniformity over the range $k\in \setK$ can be obtained by a simple union bound without being too conservative.

%More specifically, the reference family is  chosen to enjoy a forest structure, while the  $\zeta_k(X)$ are chosen to satisfy the constraint \eqref{eq:jercontrol} via the DKW inequality. The uniformity over the range $k\in \setK$ is then ensured via a simple union bound.
Let us mention that using the DKW inequality to obtain a confidence bound for the proportion of null hypotheses is not new, see \citet{genovese2004stochastic} (Equation~(16) therein), \citet{meinshausen2006false}, and \citet{farcomeni2011conservative}. While our bound is a uniform improvement of the existing version (see Remark~\ref{rem:comparisonpreviouspi0} below for more details),
our main innovation is to use the DKW bound in a local manner and to appropriately combine these local bounds to derive an overall post hoc bound. The improvement can be substantial, as illustrated in our numerical experiments.

% We calibrate the $\zeta_k(X)$ by using the DKW inequality. This new approach can be seen as a dual of the one followed in BNR, for which the $R_k$'s were random and $\zeta_k=k-1$. The potential improvement is substantial, as Figure~\ref{} shows \todo{TOADD}.

%Establishing a confidence bound for the proportion of true signal has a long history in multiple testing. In our work, we have chosen to use a bound based on the famous DKW inequality. While this idea is not new, see GW2004 see Equation~(16) therein and \citet{farcomeni2011conservative} (see also Meinhausen and Rice 2006), our bound is a slight improvement of the existing version. With our notation, the previous $(1-\alpha)$-confidence bound would correspond to take
%$$
%\zeta_k(X)=\frac{\sum_{i\in R_k} \mathbf{1}{\{p_i(X) > t\}}+|R_k|^{1/2} C}{1-t}
%%\frac{\#\{i\in R_k\::\:p_i>t\}+m^{1/2} \sqrt{(\log (2/\alpha))/2}}{1-t}.
%$$
%Compared to \eqref{equ:zeta}, our deviation term is of order $\left(\sum_{i\in R_k} \mathbf{1}{\{p_i(X) > t\}}\right)^{1/2}$ which can be smaller than $|R_k|^{1/2}$. Obviously,
%this improvement is not main point of the paper;
%the main innovation is to use this bound in a local manner and to appropriately combine these local bounds to derive an overall post hoc bound.

The paper is organized as follows: precise setup and notation are introduced in Section~\ref{sec:prel}. For any reference family with a forest structure, the optimal post hoc bound is computed in Section~\ref{sec:Forest}.
The calibration of the local bounds $\zeta_k$ and of the overall reference family is done in Section~\ref{sec:calibration}. This section also includes a theoretical comparison with previous methods, which quantifies formally the amplitude of the improvement induced by the new method. The latter is supported by numerical experiments in Section~\ref{sec:num}, where a hybrid approach is also introduced to mimic the best between the new approach and the existing Simes bound (the latter being defined in \eqref{equ:SimesV} below). A discussion is given in Section~\ref{sec:dis} and the proofs are provided in Section~\ref{sec:proof}. Additional technical details are postponed to Appendices~\ref{app:sec:aux} and \ref{app:sec:leaves}.
%For completeness, an online illustration is also shown at \url{TOADD}.

%\todo{Faut-il decrire la procedure avec un binary Forest +  borne post hoc avec une interpolation simple ?}
%\todo{par simple, tu veux dire avec par exemple du HB local ?}
%\todo{Donner $V^\star$ pour un cas simple ?}

\section{Preliminaries}\label{sec:prel}

\subsection{Assumptions}

We focus on the common situation where a test statistic $T_i(X)$ is available for each null hypothesis $H_{0,i}$. For $i \in \Nm$, each statistic $T_i(X)$ is transformed into a $p$-value $p_i(X)$, satisfying the following assumptions:
 \begin{align}
&\forall i \in\mathcal{H}_0, \:\:\forall t\in[0,1], \:\:\Pro{p_i(X)\leq t} \leq t;
\tag{Superunif}
\label{eq:superunif}\\
&\{p_i(X)\}_{i\in\mathcal{H}_0} \text{ is a family of independent $p$-values and is independent of } \{p_i(X)\}_{i\in\mathcal{H}_1}.
\tag{Indep}
\label{eq:indep}
\end{align}
Extending our results to the case where \eqref{eq:indep} fails is possible, see the discussion in Section~\ref{sec:dis}.

\subsection{Classical post hoc bounds}

As argued in BNR, computing the optimal post hoc bound \eqref{eq:optbound1} relative
to a given reference family $(R_k,\zeta_k)_{k \in \mtc{K}}$ can be  NP-hard, and simpler, more conservative versions can be provided, that is, bounds $V$ such that for all $S\subset \Nm$, $V^*_{\Rfam}(R)\leq V(R)$.
A simple upper-bound for $V^*_{\Rfam}$ is given by
\begin{align}
\Vbar(S)&= |S|\wedge \min_{k\in\setK}\left\{\zeta_{k}+ |S\setminus R_k|  \right\} , \:\:\: S\subset \Nm\,.
\label{eq:Vbar}
\end{align}
It is straightforward to check that
\begin{align}
V^*_{\Rfam}(S)\leq \Vbar(S), \:\:\: S\subset \Nm.
\label{eq:Vbarconservative}
\end{align}
While this inequality is strict in general, BNR established that it is an equality if the reference family is nested, that is,
\begin{equation}\label{equ:nested}\tag{Nested}
\mbox{$\setK=\{1,\dots,K\}$ and $R_k\subset R_{k+1}$ for $1\leq k \leq K-1$. }
\end{equation}
Condition \eqref{equ:nested} is mild when the sequence $\zeta_k$ is nondecreasing, e.g., $\zeta_k=k-1$.

A consequence of \eqref{eq:Vbarconservative} is that $\Vbar$ is a post hoc bound in the sense of \eqref{aim} as soon as the reference family $\Rfam$ is such that \eqref{eq:jercontrol} holds.
A simple union bound under \eqref{eq:superunif} yields that \eqref{eq:jercontrol} holds with $\Rfam=\{(R_1,\zeta_1)\}$, $R_1=\{i\in\Nm\::\: p_i \leq \alpha/m\}$, $\zeta_1=0$. This leads to the Bonferroni post hoc  bound
\begin{align}\label{equ:BonfV}
\VBonf(S) = \sum_{i\in S}\ind{p_i(X)>\alpha/m}, \:\:\:\:\:S\subset \Nm.
\end{align}
The more subtle Simes inequality \citep{simes1986improved}, valid under \eqref{eq:superunif}--\eqref{eq:indep}, ensures that  \eqref{eq:jercontrol} holds with $\Rfam=\{(R_k,\zeta_k),1\leq k \leq m\}$, $R_k=\{i\in\Nm\::\: p_i \leq \alpha k/m\}$, $\zeta_k=k-1$.
This leads to the Simes post hoc bound
\begin{align}\label{equ:SimesV}
\VSimes(S) = \min_{1\leq k \leq m}\left\{\sum_{i\in S}\ind{p_i(X)>\alpha k/m} + k-1\right\}, \:\:\:\:\:S\subset \Nm.
\end{align}
As noted in BNR, this bound is identical to post hoc bound of \citet{goeman2011multiple}, which will be used as a benchmark in this paper.

\subsection{Improved interpolation bound}

When the sequence $\zeta_k$ is not nondecreasing, inequality \eqref{eq:Vbarconservative} can be far too conservative. % and we introduce here a new upper-bound  for $V^*_{\Rfam}$.
We introduce the following extension: for a reference family $\Rfam=(R_k(X),\zeta_k(X))_{k\in\setK}$ of cardinal $K=|\setK|$,
\begin{align}
\Vtildeq(S)&=\min_{Q\subset \setK, |Q|\leq q} \Bigg( \sum_{k\in Q} \zeta_{k}\wedge|S\cap R_{k}| +\bigg|S\setminus\bigcup_{k\in Q} R_{k}\bigg|  \Bigg) , \:\:\: 1\leq q\leq K , \:\:\: S\subset \Nm\,;
\label{eq:Vtildeq}\\
\Vtilde(S)&= \Vtilde^{K}(S), \:\:\: S\subset \Nm\,.
\label{eq:VtildeK}
\end{align}
Obviously, we have $\Vtilde^1=\Vbar$ and $\Vtildeq$ is non-increasing in $q$.
The following result shows that these bounds are all conservative versions of $V^*_{\Rfam}$.
\begin{lemma}\label{lem:Vtilde}
For any reference family $\Rfam$,  we have
\begin{equation}\label{equ:Vtildeconservative}
V^*_{\Rfam}(S)\leq \Vtilde(S)\leq \Vtildeq(S)\leq \Vbar(S), \:\:\: 1\leq q\leq K,\:\:\:S\subset \Nm.
\end{equation}
 In particular, if $\Rfam$ is such that \eqref{eq:jercontrol} holds, then $\Vtilde$ is a post hoc bound in the sense of \eqref{aim}.
\end{lemma}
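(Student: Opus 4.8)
The plan is to establish the chain \eqref{equ:Vtildeconservative} from right to left, since the last two inequalities are essentially bookkeeping and the first one carries the conceptual content. Fix $S\subset\Nm$ throughout.

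\textbf{Step 1: $\Vtildeq(S)\le\Vbar(S)$ and monotonicity in $q$.} Restricting the minimum in \eqref{eq:Vtildeq} to singletons $Q=\{k\}$ (legitimate once $q\ge 1$) recovers exactly the terms $\zeta_k\wedge|S\cap R_k| + |S\setminus R_k|$. Since $\zeta_k\wedge|S\cap R_k| + |S\setminus R_k| \le \zeta_k + |S\setminus R_k|$, taking the minimum over $k$ gives $\Vtildeq(S)\le\min_k\{\zeta_k+|S\setminus R_k|\}$; noting also the trivial bound $\Vtildeq(S)\le|S|$ (take $Q=\emptyset$, or observe each summand is at most the corresponding piece of a partition of $S$) yields $\Vtildeq(S)\le\Vbar(S)$. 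Monotonicity $\Vtilde^{q+1}(S)\le\Vtilde^q(S)$ is immediate because enlarging the feasible set $\{Q:|Q|\le q\}$ can only decrease the minimum; in particular $\Vtilde(S)=\Vtilde^K(S)\le\Vtildeq(S)$ for all $q$.

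\textbf{Step 2: $V^*_\Rfam(S)\le\Vtilde(S)$ — the main point.} Let $A\subset\Nm$ be any admissible set in \eqref{eq:optbound1}, i.e.\ $|R_k\cap A|\le\zeta_k$ for all $k\in\setK$, and let $Q\subset\setK$ be arbitrary. I would decompose $S\cap A$ according to membership in $\bigcup_{k\in Q}R_k$: writing $S\cap A = \big((S\cap A)\setminus\bigcup_{k\in Q}R_k\big)\sqcup\big((S\cap A)\cap\bigcup_{k\in Q}R_k\big)$, the first block has size at most $|S\setminus\bigcup_{k\in Q}R_k|$, and the second block is covered by $\bigcup_{k\in Q}(S\cap A\cap R_k)$, hence has size at most $\sum_{k\in Q}|S\cap A\cap R_k|$ by the union bound. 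For each $k\in Q$ we bound $|S\cap A\cap R_k|$ in two ways: by $|A\cap R_k|\le\zeta_k$, and by $|S\cap R_k|$; thus $|S\cap A\cap R_k|\le\zeta_k\wedge|S\cap R_k|$. Combining, $|S\cap A|\le\sum_{k\in Q}\zeta_k\wedge|S\cap R_k| + |S\setminus\bigcup_{k\in Q}R_k|$ for every $Q$, in particular for every $Q$ with $|Q|\le K$ (all of them). Taking the minimum over such $Q$ on the right and then the maximum over admissible $A$ on the left gives $V^*_\Rfam(S)\le\Vtilde(S)$.

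\textbf{Step 3: post hoc property.} This is now a direct consequence: if $\Rfam$ satisfies \eqref{eq:jercontrol}, then on the event of probability at least $1-\alpha$ appearing there, $\cH_0(P)$ is itself an admissible set for \eqref{eq:optbound1}, so $|S\cap\cH_0(P)|\le V^*_\Rfam(S)\le\Vtilde(S)$ simultaneously over all $S$ by the already-proved first inequality; hence $\Vtilde$ satisfies \eqref{aim}.

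The only mildly delicate point is the double bound $|S\cap A\cap R_k|\le\zeta_k\wedge|S\cap R_k|$ together with the union bound in Step~2 — one must be careful that the sets $S\cap A\cap R_k$ for $k\in Q$ may overlap, so the bound $\sum_{k\in Q}|S\cap A\cap R_k|$ is genuinely the union bound and not an equality; this is harmless for an upper bound but is where the argument could be mis-stated. Everything else is elementary set-theoretic manipulation.
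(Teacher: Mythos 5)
Your proposal is correct and follows essentially the same route as the paper: the core step is the identical decomposition of $S\cap A$ into the part inside $\bigcup_{k\in Q}R_k$ (bounded via subadditivity and $|S\cap A\cap R_k|\le\zeta_k\wedge|S\cap R_k|$) and the part outside, followed by minimizing over $Q$ and maximizing over admissible $A$. Your Steps 1 and 3 merely spell out in more detail what the paper dismisses as straightforward (monotonicity in $q$, the identity $\Vtilde^1=\Vbar$, and the admissibility of $\cH_0(P)$ on the JER event), and all of these details check out.
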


Lemma~\ref{lem:Vtilde} is proved in Section~\ref{proof:firstlemma}.
The inequality $V^*_{\Rfam}(S)\leq \Vtilde(S)$ in \eqref{equ:Vtildeconservative} is strict in general, see Example~\ref{contre_no}.
As we will show in the next section, this relation is nevertheless an equality when $\Rfam$ has a specific forest structure, which makes $\Vtilde$ a particularly interesting bound.

\begin{example}
\label{contre_no}
Let $m=4$, %$\setK=\N{3}$,
$K=3$, $R_1=\{1,2,4\}$, $R_2=\{2,3,4\}$, $R_3=\{1,3,4\}$. Consider  the event where $\zeta_1(X)=\zeta_2(X)=\zeta_3(X)=1$. For $S=\N{4}$, we easily check that $V^*_{\Rfam}(S)=1$ and $\Vtilde(S)=2$.
\end{example}

\section{Post hoc bound for forest structured reference family}\label{sec:Forest}

%We show in this section the interest of the new proposed shortcut $\Vtildeq(S)$ for Forest structured reference families. %First, it coincides with $V^*_{\Rfam}$, and thus is optimal in that sense. Second, it can be easily computed.

\subsection{Forest structure}\label{sec:forest}

%\todo{Check if this corresponds to the classical Forest structure}

%We define below a forest structure.
\begin{definition}
A reference family $\Rfam=(R_k,\zeta_k)_{k\in\setK}$ is said to have a forest structure if following property is satisfied:
\begin{equation}
\forall k,\tk\in\setK , \:\:R_k \cap R_\tk \in \{ R_k,  R_\tk , \varnothing \},
\label{eq:Forest}\tag{Forest}
\end{equation}
that is, two elements of $\{R_k\}_{k\in\setK}$ are either disjoint or nested.
\label{def:Forest}
\end{definition}

The forest structure is general enough to cover a wide range of different situations, as for instance the disjoint case
\begin{equation}
\forall k,\tk\in\setK , \:\:  k\neq\tk \Rightarrow R_k \cap R_\tk=\varnothing.
\label{equ:disjoint}\tag{Disjoint}
\end{equation}
and the nested case  \eqref{equ:nested}. %: if \eqref{equ:nested} holds, then the family $(R_k)_k$ can be reindexed under the form \eqref{localstruct} and condition \eqref{eq:Forest} is met. % (the empty set case being excluded).
In general, if each $R_k$ is considered as a node and if an oriented edge $R_k \leftarrow R_{\tk}$ is depicted between two different sets $R_k$ and $R_{\tk}$  if and only if $R_k \subset R_{k'}$ and there is no $R_{k''}$ such that $R_k \subsetneq R_{k''}\subsetneq R_{k'}$; the obtained graph correspond to a (directed) forest in the classical graph theory sense, see e.g. \citet{kolaczyk2009statistical}. An illustration is given in  Figure~\ref{fig:graph}.
The positions of the nodes in this picture rely on the depth of $\Rfam$, which can be defined as the function
\begin{equation}
\h \: : \: \left\{
\begin{array}{l  c l  }
 \setK & \to & \mathbb{N}^*\\
k & \mapsto & 1 + \left| \{\tk\in\setK: R_\tk\supsetneq R_k \} \right|   .
\end{array}
\right.
\label{eq:depth}
\end{equation}
For instance, under \eqref{equ:disjoint}, $\phi(k)=1$ for all $k\in \setK$, while under \eqref{equ:nested}, $ \h(k)=K+1-k$ for all $1\leq k\leq K$.
%\todo{d'aprÃšs \url{https://stackoverflow.com/questions/2603692/what-is-the-difference-between-tree-depth-and-depth} il faudrait changer depth en depth et la faire dÃ©marrer Ã  0}

%it is a reformulation of the classical hierarchical forest structure of the graph theory.
%A natural notion of depth comes from this hierarchical structure.

\begin{example}\label{ex:base}
Let $m=25$, $R_1 = \{1, \dotsc , 20 \}$, $R_2  =  \{1, 2  \}$, $R_3   =   \{3 , \dotsc , 10 \}$, $R_4  =    \{11, \dotsc , 20 \}$, $R_5 =  \{5, \dotsc , 10 \}$, $R_6   =     \{11, \dotsc , 16 \}$, $R_7  =   \{17, \dotsc ,20  \}$, $R_8=\{21,22\}$, $R_9 = \{22\}$. Then the corresponding reference family $\Rfam=(R_k,\zeta_k)_{1\leq k\leq 9}$ satisfies \eqref{eq:Forest}.
The sets $R_1$, $R_8$ are of depth~$1$; the sets $R_2,R_3,R_4,R_9$ are of depth~$2$; the sets $R_5,R_6,R_7$ are of depth~$3$.
%Assume $m=25$ and we have at hand $9$ sets defined as follows:
%\begin{equation*}
%\begin{array}{rcl  | rcl | rcl}
% R_1 &= &\{1, \dotsc , 20 \} & R_5  & =  &   \{5, \dotsc , 10 \} &R_8&=&\{21,22\}\\
%R_2  & =  & \{1, 2  \}& R_6  & =  &   \{11, \dotsc , 16 \}   & R_9 &=& \{22\}\\
%R_3  & =  &   \{3 , \dotsc , 10 \} & R_7  & =  &   \{17, \dotsc ,20  \}  &&&\\
%R_4  & =  &   \{11, \dotsc , 20 \}  &&&
%\end{array}
%\end{equation*}
\end{example}

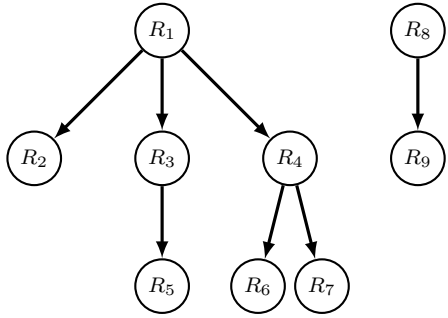
\begin{figure}[h!]
\begin{center}
%\begin{minipage}{0.45\textwidth}
\begin{tikzpicture}[scale=0.85]
 % dÃ©finition des styles
 \tikzstyle{quadri}=[circle,draw,text=black, thick]
 \tikzstyle{estun}=[->,>=latex,very thick]
% les n?uds
 \node[quadri] (R1) at (0,3) {$R_1$};
 \node[quadri] (R2) at (-2,1) {$R_2$};
 \node[quadri] (R3) at (0,1) {$R_3$};
 \node[quadri] (R4) at (2,1) {$R_4$};
  \node[quadri] (R5) at (0,-1) {$R_5$};
  \node[quadri] (R6) at (1.5,-1) {$R_6$};
  \node[quadri] (R7) at (2.5,-1) {$R_7$};
  \node[quadri] (R8) at (4,3) {$R_8$};
  \node[quadri] (R9) at (4,1) {$R_9$};
 % les flÃšches
  \draw[estun] (R1)--(R2);
  \draw[estun] (R1)--(R3);
  \draw[estun] (R1)--(R4);
  \draw[estun] (R3)--(R5);
    \draw[estun] (R4)--(R6);
  \draw[estun] (R4)--(R7);
 \draw[estun] (R8)--(R9);
%  \draw[estun] (R)to[bend left](Q);  \draw[estun] (R)--(P);
%  \draw[estun] (L)--(Q.south east);  \draw[estun] (L)--(P);
%  \draw[estun] (C)to[bend right](Q.east); \draw[estun] (C)to[bend left](P);
%  \draw[estun] (C)--(L.south east);  \draw[estun] (C)to[bend left](R);
% % la lÃ©gende
 % \draw[estun] (-4.5,2.5)--(-3,2.5)node[midway,above]{$\supset$};
\end{tikzpicture}
%\end{minipage}
%\hspace{1cm}
%\begin{minipage}{0.45\textwidth}
\end{center}
\caption[Graph of the reference family of Example~\ref{ex:base}]{
Graph corresponding to the reference family given in Example~\ref{ex:base}.
\label{fig:graph}}
\end{figure}
%\todo{suggestion PN : cercle double pour un node qui est la rÃ©union de ses enfants ?}
%Etienne: ca va surcharger un peu bon a voir
%Note also that $\Rfam$ can be rewritten with more suitable notation.
A useful characterization of a forest-structure reference family is given in the next lemma.
\begin{lemma}\label{lm:partition}
For any reference family $\Rfam=(R_k,\zeta_k)_{k\in\setK}$ having the structure \eqref{eq:Forest}, there exists a partition $(P_n)_{1\leq n \leq N}$ of $\Nm$ such that
for each $k\in \setK$, there exists some $(i,j)$ with $1\leq i\leq j\leq N$ and $R_k=\Rf{i}{j}$, where we denote
\begin{equation}
\Rf{i}{j}=\bigcup_{i\leq n \leq j}P_n,\:\:\:\:\:\:\:1\leq i\leq j\leq N.
\label{eq:Rij}
\end{equation}
Conversely, for some partition $(P_n)_{1\leq n \leq N}$ of $\Nm$, consider any reference family of the form $\Rfam=(\Rf{i}{j},\zeta_{i,j})_{(i,j)\in\setC}$ with $\setC\subset \{(i,j)\in \mathbb{N}_N^2 \::\: i\leq j\}$ such that for $(i,j), (\ti,\tj)\in\setC$, we have
$$\mbox{
$\llbracket i,j \rrbracket\cap \llbracket \ti,\tj \rrbracket=\varnothing$;
or
$\llbracket i,j \rrbracket\subset \llbracket \ti,\tj\rrbracket$;
or
$\llbracket \ti,\tj \rrbracket \subset \llbracket i,j\rrbracket$
},$$
where $\llbracket i,j \rrbracket$ denotes the set of all integers between $i$ and $j$.
%
%
%$$\mbox{
%$i\leq j<\ti\leq\tj$;
%or
%$\ti\leq\tj<i\leq j$;
%or  $\ti\leq i\leq j\leq\tj$;
%or $i\leq\ti\leq\tj\leq j$.
%}$$
%\todo{changer cette ligne en parlant plutÃŽt des intervalles d'entiers $\llbracket i,j \rrbracket$ et $\llbracket \ti,\tj \rrbracket$ ?}
Then $\Rfam$ has the structure \eqref{eq:Forest}.
\end{lemma}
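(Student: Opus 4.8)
The plan is to prove the two directions of Lemma~\ref{lm:partition} separately, both relying on the basic combinatorial fact that a laminar family of subsets of a finite set (two members disjoint or nested) can be ``coordinatized'' by intervals.

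\medskip

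\textbf{The forward direction.}
Given a reference family $\Rfam=(R_k,\zeta_k)_{k\in\setK}$ satisfying \eqref{eq:Forest}, I would first build the partition $(P_n)_{1\le n\le N}$ of $\Nm$ by declaring two indices $a,b\in\Nm$ equivalent if they belong to exactly the same sets $R_k$, i.e.\ $\{k\in\setK: a\in R_k\}=\{k\in\setK: b\in R_k\}$; the equivalence classes form a partition, which I will then order suitably. Concretely, I would process the sets $R_k$ in order of decreasing cardinality (equivalently, from the roots of the forest towards the leaves): starting from the trivial partition $\{\Nm\}$, each $R_k$ refines the current partition by splitting each current block $B$ into $B\cap R_k$ and $B\setminus R_k$; the \eqref{eq:Forest} property guarantees that at every stage each $R_k$ is exactly a union of current blocks, so the blocks are never ``cut across.'' To get the interval representation $R_k=\Rf ij$, I would maintain the invariant that the current blocks are linearly ordered so that every set already processed is a contiguous run of blocks; when splitting a block $B=P_n$ into $B\cap R_k$ and $B\setminus R_k$ one simply inserts the two pieces consecutively in place of $P_n$, and by the laminar property this does not break contiguity of any previously processed $R_\ell$ (either $R_\ell\cap B=\varnothing$, or $R_\ell\subset B$, or $B\subset R_\ell$, and in each case the re-split stays contiguous). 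At termination, relabel the blocks $P_1,\dots,P_N$ from left to right; then by construction each $R_k$ equals $\Rf ij$ for the appropriate $i\le j$.

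\medskip

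\textbf{The converse direction.}
This is the easy half: given a partition $(P_n)_{1\le n\le N}$ and a family $\Rfam=(\Rf ij,\zeta_{i,j})_{(i,j)\in\setC}$ with the stated laminarity of the index intervals $\llbracket i,j\rrbracket$, I simply need to check \eqref{eq:Forest}. Since the $P_n$ are pairwise disjoint and nonempty, the map $\llbracket i,j\rrbracket\mapsto\Rf ij=\bigcup_{i\le n\le j}P_n$ is injective and preserves inclusions and disjointness: $\Rf ij\cap\Rf{\ti}{\tj}=\bigcup_{n\in\llbracket i,j\rrbracket\cap\llbracket\ti,\tj\rrbracket}P_n$, which is $\Rf ij$, $\Rf{\ti}{\tj}$, or $\varnothing$ exactly according to whether $\llbracket i,j\rrbracket\cap\llbracket\ti,\tj\rrbracket$ equals $\llbracket i,j\rrbracket$, $\llbracket\ti,\tj\rrbracket$, or $\varnothing$. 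By hypothesis on $\setC$ one of these three cases always holds, so \eqref{eq:Forest} follows immediately.

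\medskip

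\textbf{Main obstacle.}
The only delicate point is the forward direction, and within it the claim that the blocks can be linearly ordered so that \emph{every} $R_k$ is an interval of consecutive blocks simultaneously. The clean way to see this is to induct on $|\setK|$: remove a maximal set $R_{k_0}$ (a root of some tree in the forest); by induction the remaining family admits a partition with the interval property; then $R_{k_0}$ is a union of some of these blocks, and the key subtlety is that these blocks are automatically \emph{consecutive} in the induced order, because any other remaining set is either disjoint from $R_{k_0}$, contained in it, or contains it --- a set straddling the boundary of the block-run of $R_{k_0}$ would contradict \eqref{eq:Forest}. One then splits the blocks of $R_{k_0}$ according to the subtree hanging below $R_{k_0}$ without disturbing the rest of the order. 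Making this ordering argument fully rigorous (rather than hand-waving ``process roots to leaves'') is where the actual work lies; everything else is bookkeeping.
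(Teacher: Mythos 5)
Your proof is correct and takes essentially the same route as the paper's: a top-down refinement of the trivial partition $\{\Nm\}$ guided by the forest structure (the paper batches the sets by depth level and maintains the invariants that shallower sets are consecutive unions of current blocks while each depth-$h$ set is exactly one block; you process one set at a time in decreasing cardinality order with the equivalent contiguity invariant), and in both cases the key point is that laminarity forces each newly inserted set to lie inside a single current block, so splitting that block in place preserves contiguity of every previously placed set. The converse direction is the same straightforward verification in both.
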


For the ease of notation, the set $\setC$ will be identified to $\setK$ throughout the paper, which leads to the following slight  abuse: denoting indifferently $k\in\setK$ or $(i,j)\in\setK$, and
\begin{equation}\label{localstruct}
\Rfam=(R_k,\zeta_k)_{k\in\setK} \:\:\mbox{ or }\:\:\Rfam=(\Rf{i}{j},\zeta_{i,j})_{(i,j)\in\setK}.%,\:\: \setK\subset .
\end{equation}
%and we will use both notation $k\in\setK$ or $(i,j)\in\setK$. %, depending on the most convenient in the context.

We call ``atoms'' the elements of the underlying partition $(P_n)_{1\leq n \leq N}$ because they have the thinnest granularity in the structure and because any subset $R_k$ of the family can be expressed as a combination of these atoms. Note however that this partition is not unique. A simple algorithm to compute $(P_n)_n$ and the proof of Lemma~\ref{lm:partition} are provided in Appendix~\ref{app:sec:leaves}.
An example of such a partition is  given in Example~\ref{ex:base2} and Figure~\ref{fig:graphwithpartition}.

\begin{example}\label{ex:base2}
For the reference family given in Example~\ref{ex:base}, a partition as in Lemma~\ref{lm:partition} is given by
$P_1 =R_2$, $P_2  =   R_3\setminus R_5$, $P_3  =   R_5$, $P_4=R_6$, $P_5=R_7$, $P_6=R_8\setminus R_9$, $P_7=R_9$, $P_8=\Nm \setminus \{R_1 \cup R_8 \}$.
%
%
%\begin{equation*}
%\begin{array}{rcl  | rcl}
% P_1 &= &R_2 & P_5  & =  &  R_7 \\
%P_2  & =  & R_3\setminus R_5&P_6  &=   & R_8  \\
%P_3  & =  &  R_5 &P_7  &=   & \{23, 24, 25 \}   , \\
%P_4  & =  &  R_6  &&&
%\end{array}
%\end{equation*}
%furthermore:
%\begin{equation*}
%\begin{array}{rcl  | rcl}
% R_1 &= &\Rf{1}{5} & R_5  & =  &  \Rf{3}{3} \\
%R_2  & =  & \Rf{1}{1}& R_6  & =  &  \Rf{4}{4}   \\
%R_3  & =  &  \Rf{2}{3}& R_7  & =  &  \Rf{5}{5} \\
%R_4  & =  &  \Rf{4}{5} &R_8&=& \Rf{6}{6},
%\end{array}
%\end{equation*}
\end{example}

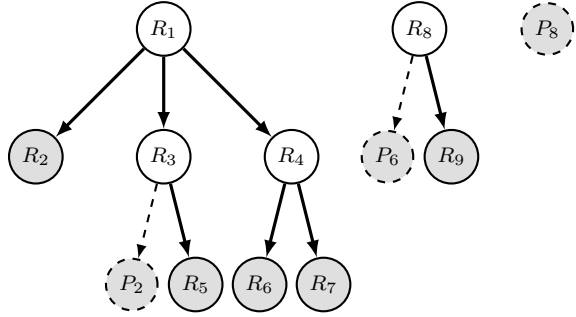
\begin{figure}[h!]
\begin{center}
\begin{tikzpicture}[scale=0.85]
 % dÃ©finition des styles
 \tikzstyle{quadri}=[circle,draw,text=black,thick]
 \tikzstyle{estun}=[->,>=latex,very thick]
% les n?uds
 \node[quadri] (R1) at (0,3) {$R_1$};
 \node[quadri, fill=gray!25] (R2) at (-2,1) {$R_2$};
 \node[quadri] (R3) at (0,1) {$R_3$};
 \node[quadri] (R4) at (2,1) {$R_4$};
  \node[quadri, dashed,fill=gray!25] (P2) at (-0.5,-1) {$P_2$};
  \node[quadri,fill=gray!25] (R5) at (0.5,-1) {$R_5$};
  \node[quadri,fill=gray!25] (R6) at (1.5,-1) {$R_6$};
  \node[quadri,fill=gray!25] (R7) at (2.5,-1) {$R_7$};
  \node[quadri] (R8) at (4,3) {$R_8$};
  \node[quadri,dashed,fill=gray!25] (P6) at (3.5,1) {$P_6$};
  \node[quadri,fill=gray!25] (R9) at (4.5,1) {$R_9$};
  \node[quadri,dashed,fill=gray!25] (P8) at (6,3) {$P_8$};
 % les flÃšches
  \draw[estun] (R1)--(R2);
  \draw[estun] (R1)--(R3);
  \draw[estun] (R1)--(R4);
  \draw[estun] (R3)--(R5);
  \draw[estun,dashed,thick] (R3)--(P2);
    \draw[estun] (R4)--(R6);
  \draw[estun] (R4)--(R7);
 \draw[estun] (R8)--(R9);
  \draw[estun,dashed,thick] (R8)--(P6);
%  \draw[estun] (R)to[bend left](Q);  \draw[estun] (R)--(P);
%  \draw[estun] (L)--(Q.south east);  \draw[estun] (L)--(P);
%  \draw[estun] (C)to[bend right](Q.east); \draw[estun] (C)to[bend left](P);
%  \draw[estun] (C)--(L.south east);  \draw[estun] (C)to[bend left](R);
% % la lÃ©gende
 % \draw[estun] (-4.5,2.5)--(-3,2.5)node[midway,above]{$\supset$};
\end{tikzpicture}
%\end{minipage}
\end{center}
\caption[Graph of the reference family of Example~\ref{ex:base} with atoms]{
Graph corresponding to the reference family given by Example~\ref{ex:base}, with the associated partition (atoms) $\{P_n,1\leq n \leq N\}$, displayed by light gray nodes and given in Example~\ref{ex:base2}. The nodes that correspond to atoms that are not in the reference family are depicted with a dashed circle.
\label{fig:graphwithpartition}}
\end{figure}

%In general, the atoms are not all members of the family.
An important particular case in our analysis is the case where the forest structure includes all atoms, that is
\begin{equation}
\forall n\in\{1,\dotsc,N\}, \:\:P_n\in\{ R_k, k\in\setK\} .%\text{, or, equivalently, }   \forall i\in\{1,\dotsc,N\}, (i,i)\in\setK .
\tag{All-atoms}
\label{eq:atom}
\end{equation}
When \eqref{eq:atom} does not hold (as in Example~\ref{ex:base2}), we can impose this condition by adding them to the structure, building in this way the completed reference family:

\begin{definition}\label{def:complet}
Consider any reference family $\Rfam=(\Rf{i}{j},\zeta_{i,j})_{(i,j)\in\setK}$ satisfying \eqref{eq:Forest} and associated to atoms $(P_n)_{1\leq n \leq N}$ by \eqref{localstruct}.
Let $\setK^+=\{(i,i), 1\leq i\leq N : (i,i)\not\in\setK  \}$, $\zeta_{i,i}=|\Rf{i}{i}|=|P_i|$ for all $(i,i)\in\setK^+$, and $\setK^\oplus=\setK\cup\setK^+$. %$K^\oplus=|\setK^\oplus|$,
Then the completed version of $\Rfam$ is given by
$%\begin{equation*}
\Rfam^\oplus=(\Rf{i}{j},\zeta_{i,j} )_{(i,j)\in\setK^\oplus}.%\tag{comp-Forest}
%\label{eq:complet}
%\end{equation*}
$\end{definition}
For the reference family $\Rfam$ given by Example~\ref{ex:base}, the completed version $\Rfam^\oplus$ is depicted in Figure~\ref{fig:completedgraph}.

\begin{figure}[h!]
\begin{center}
\begin{tikzpicture}[scale=0.85]
 % dÃ©finition des styles
 \tikzstyle{quadri}=[circle,draw,text=black,thick]
 \tikzstyle{estun}=[->,>=latex,very thick]
% les n?uds
 \node[quadri] (R1) at (0,3) {$R_1$};
 \node[quadri, fill=gray!25] (R2) at (-2,1) {$R_2$};
 \node[quadri] (R3) at (0,1) {$R_3$};
 \node[quadri] (R4) at (2,1) {$R_4$};
  \node[quadri,fill=gray!25] (P2) at (-0.5,-1) {$P_2$};
  \node[quadri,fill=gray!25] (R5) at (0.5,-1) {$R_5$};
  \node[quadri,fill=gray!25] (R6) at (1.5,-1) {$R_6$};
  \node[quadri,fill=gray!25] (R7) at (2.5,-1) {$R_7$};
  \node[quadri] (R8) at (4,3) {$R_8$};
  \node[quadri,fill=gray!25] (P6) at (3.5,1) {$P_6$};
  \node[quadri,fill=gray!25] (R9) at (4.5,1) {$R_9$};
  \node[quadri,fill=gray!25] (P8) at (6,3) {$P_8$};
 % les flÃšches
  \draw[estun] (R1)--(R2);
  \draw[estun] (R1)--(R3);
  \draw[estun] (R1)--(R4);
  \draw[estun] (R3)--(R5);
  \draw[estun] (R3)--(P2);
    \draw[estun] (R4)--(R6);
  \draw[estun] (R4)--(R7);
 \draw[estun] (R8)--(R9);
  \draw[estun] (R8)--(P6);
%  \draw[estun] (R)to[bend left](Q);  \draw[estun] (R)--(P);
%  \draw[estun] (L)--(Q.south east);  \draw[estun] (L)--(P);
%  \draw[estun] (C)to[bend right](Q.east); \draw[estun] (C)to[bend left](P);
%  \draw[estun] (C)--(L.south east);  \draw[estun] (C)to[bend left](R);
% % la lÃ©gende
 % \draw[estun] (-4.5,2.5)--(-3,2.5)node[midway,above]{$\supset$};
\end{tikzpicture}
%\end{minipage}
\end{center}
\caption[Graph of the completed reference family of Example~\ref{ex:base}]{
Graph corresponding to the completed version $\Rfam^\oplus$ of the reference family $\Rfam$ given by Example~\ref{ex:base} with the atoms given in Example~\ref{ex:base2}.\label{fig:completedgraph}}
\end{figure}
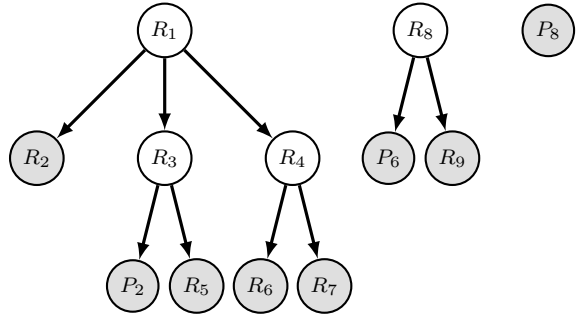

\subsection{Deriving the optimal post hoc bound}

The next result shows that the expression of the optimal post hoc bound $V^*_{\Rfam}$ can be simplified when
$\Rfam$ satisfies \eqref{eq:Forest}.

\begin{theorem}\label{thm:Forest}
Let $\Rfam$ be a reference family having the structure~\eqref{eq:Forest}. %, as in Definition~\ref{def:Forest}.
Then the optimal bound $V^*_{\Rfam}$ \eqref{eq:optbound1} can be derived from the bounds $\Vtildeq$ \eqref{eq:Vtildeq} and $\Vtilde$ \eqref{eq:VtildeK} in the following way:
\begin{align}
V^*_{\Rfam}(S)&= \Vtilde(S), \:\:\:S\subset \Nm;\label{itemi}\\
V^*_{\Rfam}(S)&= \Vtilde^d(S), \:\:\:S\subset \Nm,\label{itemii}
\end{align}
where $d$ is the maximum number of disjoint sets that can be found in the reference family, that is,
$$
d=\max \{ |Q|,  Q \subset \setK : \forall k,\tk\in Q , \:\:  k\neq\tk \Rightarrow R_k \cap R_\tk=\varnothing  \}.
%_{\substack{Q \subset \setK \\ \forall k,\tk\in Q , \:\:  k\neq\tk \Rightarrow R_k \cap R_\tk=\varnothing }} |Q| .
$$
\end{theorem}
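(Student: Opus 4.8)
The plan is to prove \eqref{itemi} first, since \eqref{itemii} will then follow by identifying which subsets $Q$ in the definition of $\Vtilde^q$ actually matter. We already know from Lemma~\ref{lem:Vtilde} that $V^*_{\Rfam}(S)\leq \Vtilde(S)$ always holds, so the whole content is the reverse inequality $\Vtilde(S)\leq V^*_{\Rfam}(S)$ under \eqref{eq:Forest}. For this I would produce, for each fixed $S$, an explicit admissible set $A\subset\Nm$ (i.e. satisfying $|R_k\cap A|\leq \zeta_k$ for all $k$) with $|S\cap A|\geq \Vtilde(S)$. Using the atom representation from Lemma~\ref{lm:partition}, write $S$ as a disjoint union of $S\cap P_n$; the construction of $A$ should keep or discard parts of $S$ atom by atom, guided by where the ``binding'' constraints are. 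Concretely, I expect the optimal $A$ to be of the form $S\setminus\bigcup_{k\in Q^\star}R_k$ enlarged back by, inside each $R_k$ with $k\in Q^\star$, a carefully chosen subset of $S\cap R_k$ of size exactly $\zeta_k$ (or $|S\cap R_k|$ if that is smaller), where $Q^\star$ is a minimizing family in \eqref{eq:VtildeK}. The forest/laminar structure is what makes this coherent: because two reference sets are nested or disjoint, the constraints attached to different $k\in Q^\star$ can be satisfied ``locally'' without interfering, which is exactly what fails in Example~\ref{contre_no}.

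The cleanest route is probably an induction on the forest, or equivalently on the number of atoms $N$, processing the laminar family from the leaves upward. At a leaf atom $P_n$ that equals some $R_k$, one decides how much of $S\cap P_n$ to retain in $A$ so as not to violate the constraint at that $R_k$; then one contracts $P_n$ into its parent, updating the residual budget $\zeta$ of every ancestor set by subtracting the amount retained, and recurses on the smaller forest. The bookkeeping has to match the two nested minima: the outer minimum over $Q$ in \eqref{eq:Vtildeq} against the inner $\wedge$ of $\zeta_k$ with $|S\cap R_k|$. I would set up a lemma asserting that the greedy local choice (retain as much of $S$ in each atom as the tightest enclosing active constraint allows) yields a globally admissible $A$, and that $|S\cap A|$ equals the right-hand side of \eqref{eq:Vtildeq}/\eqref{eq:VtildeK}; the forest property guarantees ``the tightest enclosing active constraint'' is well defined because active constraints containing a given atom form a chain.

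For \eqref{itemii}, once \eqref{itemi} is established it suffices to show that the minimum in $\Vtilde^K(S)$ is attained by some $Q$ that is an antichain of pairwise disjoint reference sets — intuitively, if $Q$ contains two nested sets $R_k\subset R_{k'}$, the inner one is redundant (the term $\zeta_k\wedge|S\cap R_k|$ can only help if $R_k$ is not already covered, but then keeping only the relevant disjoint pieces does at least as well), so one may prune $Q$ down to a disjoint subfamily without increasing the objective; any such subfamily has size at most $d$, whence $\Vtilde^d(S)=\Vtilde^K(S)=V^*_{\Rfam}(S)$. I would make the pruning step precise by a short exchange/monotonicity argument on the functional $Q\mapsto \sum_{k\in Q}\zeta_k\wedge|S\cap R_k| + |S\setminus\bigcup_{k\in Q}R_k|$, checking that removing a non-maximal element of $Q$ never increases it.

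The main obstacle I anticipate is the first step: constructing the admissible $A$ achieving $\Vtilde(S)$ and verifying its admissibility against \emph{all} constraints simultaneously, not just those indexed by the minimizing $Q^\star$. The subtlety is that a set $R_k$ with $k\notin Q^\star$ might still see some retained elements, and one must argue its constraint $|R_k\cap A|\leq\zeta_k$ is not violated; this is where the laminar structure and the ``retain only up to the tightest enclosing budget'' rule have to be invoked carefully, and getting the induction hypothesis strong enough to carry the budget updates through is the delicate part of the argument.
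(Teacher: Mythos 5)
Your strategy for \eqref{itemi} is essentially the paper's: since Lemma~\ref{lem:Vtilde} already gives $V^*_{\Rfam}(S)\le\Vtilde(S)$, both arguments exhibit an admissible $A\subset S$ attaining $\Vtilde(S)$ by a single bottom-up pass over the forest after completing the family with its atoms (Definition~\ref{def:complet} and Lemma~\ref{lm:completed} are what make your ``tightest enclosing active constraint'' well defined even for elements covered by no chosen $R_k$). The step you defer --- ``getting the induction hypothesis strong enough to carry the budget updates through'' --- is precisely where the paper's proof does its work: rather than subtracting residual budgets from ancestors, it reformulates $\Vtilde(S)$ as a minimum of $\sum_k\zeta_k\wedge|S\cap R_k|$ over partitions of $\Nm$ into reference sets (Lemma~\ref{lm:min:part:atom}) and maintains the two-part invariant that $A^{h+1}$ satisfies every constraint of depth $\ge h+1$ \emph{and} that $|A^{h+1}|$ equals the corresponding minimum restricted to depth $\ge h+1$; this forces $|A^{h+1}\cap R_k|=\zeta_k\wedge|S\cap R_k|$ on the minimizing partition and makes the level-$h$ truncation simultaneously admissible and optimal. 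Some such invariant is unavoidable: a purely local ``retain up to the residual budget'' rule certifies admissibility but not that $|A|$ actually reaches the minimum in \eqref{eq:VtildeK}, so your plan is sound but its hardest step is genuinely the one left unexecuted. For \eqref{itemii} you diverge from the paper, and your route is cleaner: removing a non-maximal element $k_0$ of $Q$ from $Q\mapsto\sum_{k\in Q}\zeta_k\wedge|S\cap R_k|+\big|S\setminus\bigcup_{k\in Q}R_k\big|$ deletes a nonnegative term without changing the union, so the minimum is attained on an antichain, which under \eqref{eq:Forest} is pairwise disjoint and hence of cardinality at most $d$; the paper instead routes through the completed family and the partition representation, splitting the minimizing partition into original members (pairwise disjoint, hence at most $d$ of them) and added atoms. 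Your pruning argument avoids the completion entirely for this part and buys a shorter, self-contained proof of \eqref{itemii} given \eqref{itemi}.
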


%A byproduct of Theorem~\ref{thm:Forest} is that, if \eqref{equ:nested} holds, then $d=1$ and thus  $V^*_{\Rfam}= \Vtilde^d=\Vtilde^1=\Vbar$, which recovers Proposition 2.5 of BNR.
%Another interesting case is the structure \eqref{equ:disjoint}, for which $d=K$ and $V^*_{\Rfam}= \Vtilde$. In addition, it turns out that $\Vtilde$ can have a simpler expression in that case.

A byproduct of Theorem~\ref{thm:Forest} is that, if \eqref{equ:nested} holds, $V^*_{\Rfam}=\Vtilde^1(S)=\Vbar$ and we recover Proposition 2.5 of BNR. Another interesting case is the structure \eqref{equ:disjoint}, where $\Vtilde$ has a simpler form. This is summarized in the following result.

\begin{corollary}\label{cor:disjoint}
Let $\Rfam$ be a reference family.
\begin{itemize}
\item[(i)] if $\Rfam$ satisfies \eqref{equ:nested}, then $V^*_{\Rfam}=\Vbar$.
\item[(ii)] if $\Rfam$ satisfies \eqref{equ:disjoint}, then
$V^*_{\Rfam}(S) =  \sum_{k=1}^K \zeta_k\wedge |S\cap R_k| + \big|S\setminus\bigcup_{k=1}^K R_k  \big| ,\:\:\: S\subset \Nm.
$
\end{itemize}
\end{corollary}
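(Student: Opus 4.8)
The plan is to prove Corollary~\ref{cor:disjoint} as a direct consequence of Theorem~\ref{thm:Forest}, since both \eqref{equ:nested} and \eqref{equ:disjoint} are special cases of \eqref{eq:Forest} for which the relevant parameter $d$ (the maximum number of pairwise disjoint reference sets) takes an extreme value. The two items are handled separately.

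\textbf{Item (i).} Under \eqref{equ:nested}, the sets $R_1\subset R_2\subset\cdots\subset R_K$ are totally ordered by inclusion, so any two of them are nested and never disjoint; hence the only subsets $Q\subset\setK$ with pairwise disjoint members are the singletons (and the empty set), giving $d=1$. Theorem~\ref{thm:Forest}\eqref{itemii} then yields $V^*_{\Rfam}=\Vtilde^1$, and by the observation right after \eqref{eq:VtildeK} we have $\Vtilde^1=\Vbar$. This also recovers Proposition~2.5 of BNR, as already noted.

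\textbf{Item (ii).} Under \eqref{equ:disjoint}, all $K$ sets $R_1,\dots,R_K$ are pairwise disjoint, so $Q=\setK$ itself is an admissible family in the definition of $d$, forcing $d=K$. Theorem~\ref{thm:Forest}\eqref{itemii} then gives $V^*_{\Rfam}=\Vtilde^K=\Vtilde$, and it remains to simplify \eqref{eq:Vtildeq} with $q=K$. In that case the constraint $|Q|\le q$ is vacuous, so the minimum is attained at $Q=\setK$ (adding indices to $Q$ can only decrease each summand $\zeta_k\wedge|S\cap R_k|$ relative to the term $|S\setminus\bigcup_{k\in Q}R_k|$ it replaces — more precisely, replacing a piece $S\cap R_k$ of the leftover set by the term $\zeta_k\wedge|S\cap R_k|$ never increases the total). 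Using disjointness, $\bigcup_{k=1}^K R_k$ is a disjoint union, so $\big|S\setminus\bigcup_{k=1}^K R_k\big| = |S| - \sum_{k=1}^K |S\cap R_k|$ and the sets $S\cap R_k$ partition $S\cap\bigcup_k R_k$; this makes the expression $\sum_{k=1}^K \zeta_k\wedge|S\cap R_k| + \big|S\setminus\bigcup_{k=1}^K R_k\big|$ exactly the claimed formula, with no further minimization needed.

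\textbf{Main obstacle.} There is essentially no obstacle here: once Theorem~\ref{thm:Forest} is granted, the only thing to verify carefully is that in the disjoint case the minimum over $Q$ in \eqref{eq:Vtildeq} is indeed achieved at the full index set $Q=\setK$, i.e. that enlarging $Q$ is always (weakly) beneficial. This follows because for each $k$, moving $k$ into $Q$ trades the contribution $|S\cap R_k|$ (coming from the leftover term, by disjointness) for $\zeta_k\wedge|S\cap R_k|\le |S\cap R_k|$; iterating over all $k$ shows $Q=\setK$ is optimal. Hence the stated closed form holds, completing the proof.
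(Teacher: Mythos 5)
Your proposal is correct and follows essentially the same route as the paper: both items are derived from Theorem~\ref{thm:Forest} by identifying $d=1$ in the nested case (so $V^*_{\Rfam}=\Vtilde^1=\Vbar$) and $d=K$ in the disjoint case, and then showing via the same exchange argument (adding an index $k_0\notin Q$ trades $|S\cap R_{k_0}|$ from the leftover term for $\zeta_{k_0}\wedge|S\cap R_{k_0}|$, using disjointness) that the minimum in \eqref{eq:Vtildeq} is attained at $Q=\setK$. No gaps.
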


Theorem~\ref{thm:Forest} and Corollary~\ref{cor:disjoint} are respectively proved in Section~\ref{sec:proof:thm} and Section~\ref{sec:proof:cor}.

The proof of Theorem~\ref{thm:Forest} being constructive, it provides an algorithm to compute easily $V^*_{\Rfam}(S)$, that we now describe.
% Our algorithm relies extensively on the notion of depth function:
% \begin{example}
%If the structure contains only atoms, that is if $\setK$ only contains only some of the $(i,i)$'s, then we are in the~\eqref{equ:disjoint} case. All depths are then equal to 1.
%\label{ex:disjoint}
%\end{example}
%
%\begin{example}
%If $\setK=\{(1,1),(1,2),\dotsc,(1,n)  \}$, $n\leq N$, we are in the~\eqref{equ:nested} case: $\Rf{1}{1}\subset\dotsc\subset \Rf{1}{n}$. Furthermore, $ \h(1,j )=n+1-j$ for all $1\leq j\leq n$.
%\label{ex:nested}
%\end{example}
%
%\begin{example}\label{ex:base3}
%$\h(1,5)=\h(6,6)=1$, $\h(1,1)=\h(2,3)=\h(4,5)=2$, $\h(3,3)=\h(4,4)=\h(5,5)=3$.% and finally
%\end{example}
Let us first introduce an additional piece of notation. For some reference family $\Rfam=(\Rf{i}{j},\zeta_{i,j})_{(i,j)\in\setK}$ of depth function $\phi$ (see \eqref{eq:depth}), we denote
\begin{equation*}
\setK^h=\{ (i,j)\in\setK : \h(i,j)=h \text{ or } (i=j \text{ and } \h(i,i)\leq h )      \}, \:\:\:h\geq 1.
%\label{eq:Kh}
\end{equation*}
Hence, each $\setK^h$ contains the indexes of the sets of depth $h$ and also the atoms with an inferior depth. Figure~\ref{fig:graphKtruc} displays some $\setK^h$ for the reference family of Example~\ref{ex:base}.
%
%\begin{example}
%In Example~\ref{ex:base} we have:
%\begin{equation*}
%\begin{split}
%\setK^1=&\{(1,5),(6,6)  \} \\
%\setK^2=&\{(1,1),(2,3),(4,5),(6,6)  \} \\
%\setK^3=&\{(1,1),(3,3),(4,4),(5,5),(6,6)  \}.\\
%\end{split}
%\end{equation*}
%\end{example}

\begin{figure}[h!]
  \begin{center}
    % \begin{minipage}{0.3\textwidth}
    %   \centerline{{\large $\setK^1$}}
    \begin{tiny}
      \begin{tikzpicture}[scale=0.65]
        % dÃ©finition des styles
        \tikzstyle{quadri}=[circle,draw,text=black, thick]
        \tikzstyle{estun}=[->,>=latex,very thick]
        % les n?uds
        \node[quadri,fill=orange] (R1) at (0,3) {$R_1$};
        \node[quadri] (R2) at (-2,1) {$R_2$};
        \node[quadri] (R3) at (0,1) {$R_3$};
        \node[quadri] (R4) at (2,1) {$R_4$};
        \node[quadri] (R5) at (0,-1) {$R_5$};
        \node[quadri] (R6) at (1.5,-1) {$R_6$};
        \node[quadri] (R7) at (2.5,-1) {$R_7$};
        \node[quadri,fill=orange] (R8) at (4,3) {$R_8$};
        \node[quadri] (R9) at (4,1) {$R_9$};
        % les flÃšches
        \draw[estun] (R1)--(R2);
        \draw[estun] (R1)--(R3);
        \draw[estun] (R1)--(R4);
        \draw[estun] (R3)--(R5);
        \draw[estun] (R4)--(R6);
        \draw[estun] (R4)--(R7);
        \draw[estun] (R8)--(R9);
        % \draw[estun] (R)to[bend left](Q);  \draw[estun] (R)--(P);
        % \draw[estun] (L)--(Q.south east);  \draw[estun] (L)--(P);
        % \draw[estun] (C)to[bend right](Q.east); \draw[estun] (C)to[bend left](P);
        % \draw[estun] (C)--(L.south east);  \draw[estun] (C)to[bend left](R);
        % % la lÃ©gende
        % \draw[estun] (-4.5,2.5)--(-3,2.5)node[midway,above]{$\supset$};
        \node[draw,label=above:{\large $\setK_1$},fit=(R1)(R2)(R3)(R4)(R5)(R6)(R7)(R8)(R9)] {};
      \end{tikzpicture}
      % }
      % \end{minipage}
      % \hspace{0.5cm}
      % \begin{minipage}{0.3\textwidth}
      %   \centerline{{\large$\setK^2$}}
      %   {\tiny
      \begin{tikzpicture}[scale=0.65]
        % dÃ©finition des styles
        \tikzstyle{quadri}=[circle,draw,text=black, thick]
        \tikzstyle{estun}=[->,>=latex,very thick]
        % les n?uds
        \node[quadri] (R1) at (0,3) {$R_1$};
        \node[quadri,fill=orange] (R2) at (-2,1) {$R_2$};
        \node[quadri,fill=orange] (R3) at (0,1) {$R_3$};
        \node[quadri,fill=orange] (R4) at (2,1) {$R_4$};
        \node[quadri] (R5) at (0,-1) {$R_5$};
        \node[quadri] (R6) at (1.5,-1) {$R_6$};
        \node[quadri] (R7) at (2.5,-1) {$R_7$};
        \node[quadri] (R8) at (4,3) {$R_8$};
        \node[quadri,fill=orange] (R9) at (4,1) {$R_9$};
        % les flÃšches
        \draw[estun] (R1)--(R2);
        \draw[estun] (R1)--(R3);
        \draw[estun] (R1)--(R4);
        \draw[estun] (R3)--(R5);
        \draw[estun] (R4)--(R6);
        \draw[estun] (R4)--(R7);
        \draw[estun] (R8)--(R9);
        % \draw[estun] (R)to[bend left](Q);  \draw[estun] (R)--(P);
        % \draw[estun] (L)--(Q.south east);  \draw[estun] (L)--(P);
        % \draw[estun] (C)to[bend right](Q.east); \draw[estun] (C)to[bend left](P);
        % \draw[estun] (C)--(L.south east);  \draw[estun] (C)to[bend left](R);
        % % la lÃ©gende
        % \draw[estun] (-4.5,2.5)--(-3,2.5)node[midway,above]{$\supset$};
        \node[draw,label=above:{\large $\setK_2$},fit=(R1)(R2)(R3)(R4)(R5)(R6)(R7)(R8)(R9)] {};
      \end{tikzpicture}
      % }
      % \end{minipage}
      % \hspace{0.5cm}
      % \begin{minipage}{0.3\textwidth}
      %   \centerline{{\large$\setK^3$}}
      %   {\tiny
      \begin{tikzpicture}[scale=0.65]
        % dÃ©finition des styles
        \tikzstyle{quadri}=[circle,draw,text=black, thick]
        \tikzstyle{estun}=[->,>=latex,very thick]
        % les n?uds
        \node[quadri] (R1) at (0,3) {$R_1$};
        \node[quadri,fill=orange] (R2) at (-2,1) {$R_2$};
        \node[quadri] (R3) at (0,1) {$R_3$};
        \node[quadri] (R4) at (2,1) {$R_4$};
        \node[quadri,fill=orange] (R5) at (0,-1) {$R_5$};
        \node[quadri,fill=orange] (R6) at (1.5,-1) {$R_6$};
        \node[quadri,fill=orange] (R7) at (2.5,-1) {$R_7$};
        \node[quadri] (R8) at (4,3) {$R_8$};
        \node[quadri,fill=orange] (R9) at (4,1) {$R_9$};
        % les flÃšches
        \draw[estun] (R1)--(R2);
        \draw[estun] (R1)--(R3);
        \draw[estun] (R1)--(R4);
        \draw[estun] (R3)--(R5);
        \draw[estun] (R4)--(R6);
        \draw[estun] (R4)--(R7);
        \draw[estun] (R8)--(R9);
        % \draw[estun] (R)to[bend left](Q);  \draw[estun] (R)--(P);
        % \draw[estun] (L)--(Q.south east);  \draw[estun] (L)--(P);
        % \draw[estun] (C)to[bend right](Q.east); \draw[estun] (C)to[bend left](P);
        % \draw[estun] (C)--(L.south east);  \draw[estun] (C)to[bend left](R);
        % % la lÃ©gende
        % \draw[estun] (-4.5,2.5)--(-3,2.5)node[midway,above]{$\supset$};
        \node[draw,label=above:{\large $\setK_3$},fit=(R1)(R2)(R3)(R4)(R5)(R6)(R7)(R8)(R9)] {};
      \end{tikzpicture}
    \end{tiny}
    % \end{minipage}
    % \hspace{1cm}
    % \begin{minipage}{0.45\textwidth}
  \end{center}
  \caption[Nodes depicting $\setK^1$, $\setK^2$, $\setK^3$ for the reference family of Example~\ref{ex:base}]{
    Display of the nodes corresponding to $\setK^1$, $\setK^2$, $\setK^3$ (in orange) for the reference family given in Example~\ref{ex:base}.
    \label{fig:graphKtruc}}
\end{figure}
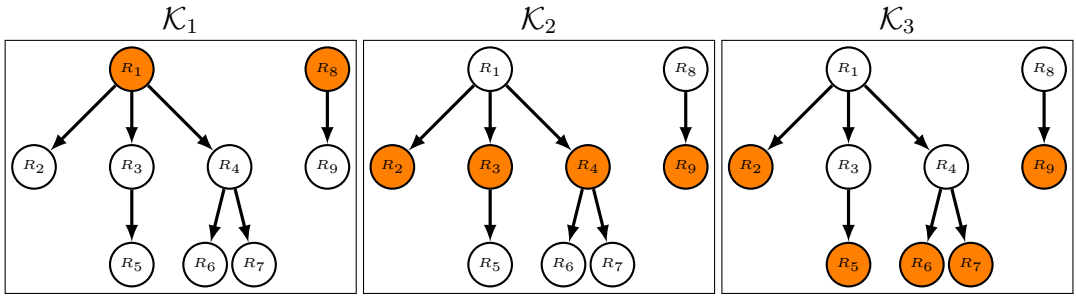

Algorithm~\ref{algo:Vstar} below gives the steps to compute $V^*_{\Rfam}(S)$: first, complete the family $\Rfam$ by adding all the members of the partition, as explained in Definition~\ref{def:complet}, in order to get $\Rfam^\oplus$. By Lemma~\ref{lm:completed}, we have $V^*_{\Rfam^\oplus}(S)=V^*_{\Rfam}(S)$, so that this operation does not change the targeted quantity. In particular, \eqref{eq:atom} holds after this step. Second, the algorithm uses a reverse loop, which successively updates a vector $V$ whose components correspond to active nodes; the current value of the bound is equal to the sum of the components of $V$. Each step of the loop will update the value of $V$ to make the bound possibly smaller, to obtain at the end $V^*_{\Rfam}(S)$. The time complexity of the Algorithm~\ref{algo:Vstar} for a given $S$ is $O(Hm)$, where $H=\max_{k\in\setK}\h(k)$ is the maximal depth of the reference family. where $\phi$ is the depth function defined by~\eqref{eq:depth}.

Let us describe the loop in more detail by using the particular situation of Figure~\ref{fig:graphKtruccomplet}.
Initialization: $H=3$ and $\setK^H=\setK^3$, which corresponds to the active nodes in the rightmost graph. Hence, $V$ is equal to the vector of values $\zeta_{k}\wedge|S\cap R_{k}|$ among these nodes. First step:  $h=2$ hence $\setK^h=\setK^2$, for which the active nodes are displayed in the middle graph. Each of these nodes $k\in \setK^2$, gives a bound  $\zeta_{k}\wedge|S\cap R_{k}|$ that should be compared with the one of the previous step, that is,  $ \sum_{k'\in Succ_k} V_{k'} $, where $Succ_k$ denotes the offspring of $R_k$. The vector $V$ is defined by the best choice among these two. Second (and final) step: $h=1$ hence $\setK^h=\setK^1$ (leftmost graph) which only contains the roots of the forest and where $V$ is updated following the same process. The algorithm then returns $V^*_{\Rfam}(S)=\sum_{k\in\setK^1} V_k$.

%\IncMargin{1em}
\begin{algorithm}
%\DontPrintSemicolon
\KwData{$\Rfam=(\Rf{i}{j},\zeta_{i,j})_{(i,j)\in\setK}$ and $S\subset \Nm$.}
%\KwData{$S\subset \Nm$.}
\KwResult{$V^*_{\Rfam}(S)$.}
%\Begin{
$\Rfam \longleftarrow \Rfam^\oplus$; $\setK \longleftarrow \setK^\oplus$ (completion, see Definition~\ref{def:complet})\;
%$\h$ depth function of $\Rfam$ (see \eqref{eq:depth})\;
$ H \longleftarrow \max_{k\in\setK} \h(k)  $, see \eqref{eq:depth}\;
$V \longleftarrow (\zeta_{k}\wedge|S\cap R_{k}|)_{k\in\setK^H}$\;
\For{$h\in \{H-1,\dotsc,1\} $}{
$newV\longleftarrow (0)_{k \in  \setK^h}$\;
	\For{$k \in  \setK^h$}{
		$Succ_k \longleftarrow \{ k' \in  \setK^{h+1} : R_{k'}\subset R_k\}$\;
		$newV_k \longleftarrow \min\left( \zeta_{k}\wedge|S\cap R_{k}| ,  \sum_{k'\in Succ_k} V_{k'}   \right)$\;
	}
$V\longleftarrow newV$\;
}
%}
\Return $\sum_{k\in\setK^1} V_k$.
\caption[Computation of \texorpdfstring{$V^*_{\Rfam}(S)$}{Vstar}]{Computation of $V^*_{\Rfam}(S)$}
\label{algo:Vstar}
\end{algorithm}
%\DecMargin{1em}

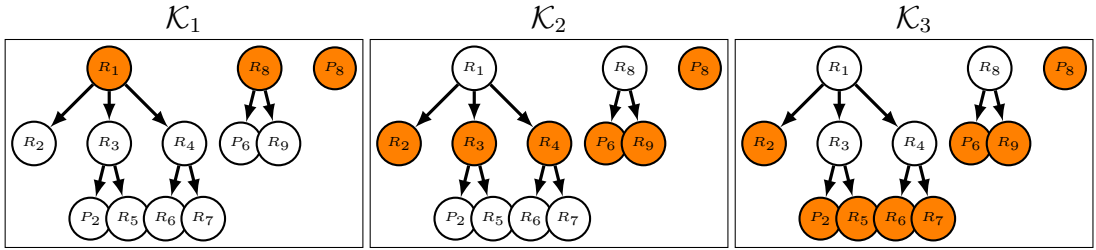
\begin{figure}[h!]
  \begin{center}
    % \begin{minipage}{0.32\textwidth}
    %   \centerline{{\large $\setK^1$}}
    \begin{tiny}
      \begin{tikzpicture}[scale=0.5]
        % dÃ©finition des styles
        \tikzstyle{quadri}=[circle,fill=white,draw,text=black,thick]
        \tikzstyle{estun}=[->,>=latex,very thick]
        % les n?uds
        \node[quadri, fill=orange] (R1) at (0,3) {$R_1$};
        \node[quadri] (R2) at (-2,1) {$R_2$};
        \node[quadri] (R3) at (0,1) {$R_3$};
        \node[quadri] (R4) at (2,1) {$R_4$};
        \node[quadri] (P2) at (-0.5,-1) {$P_2$};
        \node[quadri] (R5) at (0.5,-1) {$R_5$};
        \node[quadri] (R6) at (1.5,-1) {$R_6$};
        \node[quadri] (R7) at (2.5,-1) {$R_7$};
        \node[quadri, fill=orange] (R8) at (4,3) {$R_8$};
        \node[quadri] (P6) at (3.5,1) {$P_6$};
        \node[quadri] (R9) at (4.5,1) {$R_9$};
        \node[quadri,fill=orange] (P8) at (6,3) {$P_8$};
        % les flÃšches
        \draw[estun] (R1)--(R2);
        \draw[estun] (R1)--(R3);
        \draw[estun] (R1)--(R4);
        \draw[estun] (R3)--(R5);
        \draw[estun] (R3)--(P2);
        \draw[estun] (R4)--(R6);
        \draw[estun] (R4)--(R7);
        \draw[estun] (R8)--(R9);
        \draw[estun] (R8)--(P6);
        \node[draw,label=above:{\large $\setK_1$},fit=(R1)(R2)(R3)(R4)(R5)(R6)(R7)(R8)(R9)(P8)] {};
      \end{tikzpicture}
      % }
      % \end{minipage}
      % \hspace{2cm}
      % \begin{minipage}{0.32\textwidth}
      %   \centerline{{\large$\setK^2$}}
      %   {\tiny
      \begin{tikzpicture}[scale=0.5]
        % dÃ©finition des styles
        \tikzstyle{quadri}=[circle,fill=white,draw,text=black,thick]
        \tikzstyle{estun}=[->,>=latex,very thick]
        % les n?uds
        \node[quadri] (R1) at (0,3) {$R_1$};
        \node[quadri, ,fill=orange] (R2) at (-2,1) {$R_2$};
        \node[quadri,fill=orange] (R3) at (0,1) {$R_3$};
        \node[quadri,fill=orange] (R4) at (2,1) {$R_4$};
        \node[quadri] (P2) at (-0.5,-1) {$P_2$};
        \node[quadri] (R5) at (0.5,-1) {$R_5$};
        \node[quadri] (R6) at (1.5,-1) {$R_6$};
        \node[quadri] (R7) at (2.5,-1) {$R_7$};
        \node[quadri] (R8) at (4,3) {$R_8$};
        \node[quadri,fill=orange] (P6) at (3.5,1) {$P_6$};
        \node[quadri,fill=orange] (R9) at (4.5,1) {$R_9$};
        \node[quadri,fill=orange] (P8) at (6,3) {$P_8$};
        % les flÃšches
        \draw[estun] (R1)--(R2);
        \draw[estun] (R1)--(R3);
        \draw[estun] (R1)--(R4);
        \draw[estun] (R3)--(R5);
        \draw[estun] (R3)--(P2);
        \draw[estun] (R4)--(R6);
        \draw[estun] (R4)--(R7);
        \draw[estun] (R8)--(R9);
        \draw[estun] (R8)--(P6);
        \node[draw,label=above:{\large $\setK_2$},fit=(R1)(R2)(R3)(R4)(R5)(R6)(R7)(R8)(R9)(P8)] {};
      \end{tikzpicture}
      % }
      % \end{minipage}
      % \hspace{0.5cm}
      % \begin{minipage}{0.32\textwidth}
      %   \centerline{{\large$\setK^3$}}
      %   {\tiny
      \begin{tikzpicture}[scale=0.5]
        % dÃ©finition des styles
        \tikzstyle{quadri}=[circle,fill=white,draw,text=black,thick]
        \tikzstyle{estun}=[->,>=latex,very thick]
        % les n?uds
        \node[quadri] (R1) at (0,3) {$R_1$};
        \node[quadri, ,fill=orange] (R2) at (-2,1) {$R_2$};
        \node[quadri] (R3) at (0,1) {$R_3$};
        \node[quadri] (R4) at (2,1) {$R_4$};
        \node[quadri,,fill=orange] (P2) at (-0.5,-1) {$P_2$};
        \node[quadri,,fill=orange] (R5) at (0.5,-1) {$R_5$};
        \node[quadri,,fill=orange] (R6) at (1.5,-1) {$R_6$};
        \node[quadri,fill=orange] (R7) at (2.5,-1) {$R_7$};
        \node[quadri] (R8) at (4,3) {$R_8$};
        \node[quadri,fill=orange] (P6) at (3.5,1) {$P_6$};
        \node[quadri,fill=orange] (R9) at (4.5,1) {$R_9$};
        \node[quadri,fill=orange] (P8) at (6,3) {$P_8$};
        % les flÃšches
        \draw[estun] (R1)--(R2);
        \draw[estun] (R1)--(R3);
        \draw[estun] (R1)--(R4);
        \draw[estun] (R3)--(R5);
        \draw[estun] (R3)--(P2);
        \draw[estun] (R4)--(R6);
        \draw[estun] (R4)--(R7);
        \draw[estun] (R8)--(R9);
        \draw[estun] (R8)--(P6);
        \node[draw,label=above:{\large $\setK_3$},fit=(R1)(R2)(R3)(R4)(R5)(R6)(R7)(R8)(R9)(P8)] {};
      \end{tikzpicture}
    \end{tiny}
    % \end{minipage}
    % \hspace{1cm}
    % \begin{minipage}{0.45\textwidth}
  \end{center}
  \caption[Same as Figure~\ref{fig:graphKtruc} but for the completed version]{Same as Figure~\ref{fig:graphKtruc} but for the completed version.
    \label{fig:graphKtruccomplet}}
\end{figure}

\section{Local calibration of the reference family}\label{sec:calibration}

In this section, we explain how to build a reference family $\Rfam$ such that \eqref{eq:jercontrol} holds.
The results presented in this section hold for any deterministic $(R_k)_k$ and the calibration concerns only $(\zeta_k)_k$ here.
%When used with a forest structured $\Rfam$, this  provides a fully operational post-hoc bound. % for Forest structured reference family.

%In previous work, the reference family is built upon $R_k$ that makes {\it globally} at most $\zeta_k=k-1$ false positives. While this seems suitable for obtaining accurate bound for selection sets $S$ with small $p$-values, we will show that this is not well adapted to situations where the selection sets $S$ have a special local geometry. In this section, we propose to consider deterministic $R_k$'s and to calibrate each $\zeta_k(X)$ as a confidence bound for $|R_k\cap \cH_0|$ and then to apply a union bound.

\subsection{Calibration of $\zeta_k$ by DKW inequality}
\label{sec:calibr-zeta_k-dkw}

In this section, we estimate $|S \cap \cH_0|$ by using an approach close in spirit to the so-called Storey estimator \citep{storey2002direct}. The latter depends on a parameter, denoted by $t$ here, that has to be chosen appropriately (see \citealp{blanchard2009adaptive} for a discussion on this issue).
To avoid this caveat while improving accuracy, we can derive an estimator uniform on $t$ by using the DKW inequality \citep{dvoretzky1956asymptotic}, with the optimal constant of \citet{massart1990tight}.

For any deterministic subsets $R_k\subset \Nm$, $k\in\setK$, $K=|\setK|$, let
\begin{equation}\label{equ:zeta}
\zeta_k(X)= |R_k|\wedge \min_{t\in[0,1)} \left\lfloor \frac{C}{2(1-t)} +
\left(\frac{C^2}{4(1-t)^2} + \frac{\sum_{i\in R_k} \mathbf{1}{\{p_i(X) > t\}}}{1-t}\right)^{1/2} \right\rfloor^2      , \:\:\:\:\:k\in \setK ,
\end{equation}
where  $C=\sqrt{\frac{1}{2}\log \left(\frac{K}{\alpha} \right) }$ and $\lfloor x\rfloor$ denotes the largest integer smaller than or equal to $x$.

\begin{proposition}\label{prop:calibration}
Consider any deterministic (different) subsets $R_k\subset \Nm$, $k\in\setK$ ($K=|\setK|$) and assume $\alpha/K< 1/2$.  Assume that for all $k\in \setK$, the $p$-value family $\{p_i(X), \:i \in R_k\}$ satisfies \eqref{eq:superunif} and \eqref{eq:indep}.
Then the JER control \eqref{eq:jercontrol} holds for the reference family
$\Rfam=(R_k,\zeta_k(X))_{k\in\setK}$, for which the local bounds $\zeta_k$ are given by \eqref{equ:zeta}.
\end{proposition}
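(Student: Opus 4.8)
The plan is to show that for a fixed $k$, the event $\{|R_k \cap \cH_0| \leq \zeta_k(X)\}$ fails with probability at most $\alpha/K$, and then conclude by a union bound over $k \in \setK$. Fix $k$ and write $R_k^0 = R_k \cap \cH_0$, $n_k^0 = |R_k^0|$. The key observation is that $\zeta_k(X)$ is built from the empirical distribution of the $p$-values in $R_k$; since we only want a lower bound on the count $n_k^0$, we may restrict attention to the sub-empirical process indexed by the true null $p$-values $\{p_i(X) : i \in R_k^0\}$, which under \eqref{eq:superunif}--\eqref{eq:indep} is a family of independent variables each stochastically dominating a uniform. For such a family, the DKW inequality in the one-sided form of \citet{massart1990tight} gives, for any $\lambda > 0$,
\begin{equation*}
\Proba\Big( \exists t \in [0,1] : \widehat{F}_{R_k^0}(t) - t > \lambda \Big) \leq e^{-2 n_k^0 \lambda^2},
\end{equation*}
where $\widehat{F}_{R_k^0}(t) = (n_k^0)^{-1}\sum_{i \in R_k^0} \ind{p_i(X) \leq t}$; equivalently, with $\overline{N}_k^0(t) = \sum_{i \in R_k^0} \ind{p_i(X) > t}$, on an event of probability at least $1 - e^{-2 n_k^0 \lambda^2}$ we have $n_k^0 (1-t) - \overline{N}_k^0(t) \leq n_k^0 \lambda$ simultaneously over $t$. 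I would choose $\lambda = C/\sqrt{2 n_k^0}$ so that $e^{-2 n_k^0 \lambda^2} = e^{-C^2} = \alpha/K$ (here the hypothesis $\alpha/K < 1/2$ guarantees $C > 0$ and also that the threshold $\lambda$ we pick is in the regime where Massart's constant is valid).

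Next I would turn the deviation bound into the explicit formula \eqref{equ:zeta}. On the good event above, for every $t \in [0,1)$,
\begin{equation*}
n_k^0 (1-t) - \sqrt{n_k^0}\, C \sqrt{(1-t)/2}\cdot\sqrt{2} \;\leq\; \overline{N}_k^0(t) \;\leq\; \sum_{i \in R_k} \ind{p_i(X) > t},
\end{equation*}
the last step because $R_k^0 \subset R_k$. Writing $x = \sqrt{n_k^0}$, this reads $x^2(1-t) - C x - \sum_{i \in R_k}\ind{p_i(X)>t} \le 0$ wait --- more precisely, $x^2 (1-t) - Cx \le \sum_{i\in R_k}\ind{p_i(X)>t}$, i.e. dividing by $(1-t)$ and completing the square, $\big(x - \tfrac{C}{2(1-t)}\big)^2 \le \tfrac{C^2}{4(1-t)^2} + \tfrac{\sum_{i\in R_k}\ind{p_i(X)>t}}{1-t}$. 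Solving for $x = \sqrt{n_k^0}$ gives
\begin{equation*}
\sqrt{n_k^0} \;\leq\; \frac{C}{2(1-t)} + \left(\frac{C^2}{4(1-t)^2} + \frac{\sum_{i\in R_k}\ind{p_i(X)>t}}{1-t}\right)^{1/2},
\end{equation*}
and since this holds for every $t \in [0,1)$ we may take the minimum over $t$; then $n_k^0$ being an integer bounded by the square of the right-hand side, it is bounded by $\lfloor \min_t (\cdots) \rfloor^2$, and trivially $n_k^0 \le |R_k|$, which is exactly $n_k^0 \le \zeta_k(X)$. Hence $\Proba(|R_k \cap \cH_0| > \zeta_k(X)) \le \alpha/K$. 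A union bound over $k \in \setK$ then yields \eqref{eq:jercontrol}.

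The main obstacle is a slightly delicate corner case: when $n_k^0 = 0$ the DKW argument is vacuous but the bound $\zeta_k \ge 0 = n_k^0$ holds trivially, so this must be handled separately (or absorbed by noting the good event has full probability). One should also be careful that the deviation inequality is applied to the $p$-values of true nulls \emph{only} and that the floor/square manipulation preserves the inequality direction in the right way (we are lower-bounding $n_k^0$, so every relaxation must be in the direction of making the bound larger), and that replacing $\overline{N}_k^0(t)$ by $\sum_{i\in R_k}\ind{p_i>t}$ only increases the right-hand side. Beyond these bookkeeping points the argument is routine.
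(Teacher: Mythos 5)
Your overall strategy is the same as the paper's: apply the one-sided DKW--Massart inequality to the empirical process of the null $p$-values inside each $R_k$, invert the resulting quadratic inequality in $\sqrt{|R_k\cap\cH_0|}$ to obtain \eqref{equ:zeta}, and finish with a union bound over $\setK$. The paper merely packages the quadratic inversion in a separate elementary lemma (Lemma~\ref{lem:trivial}) and handles superuniformity via auxiliary i.i.d.\ uniform variables plus stochastic domination, which is the rigorous version of your ``stochastically dominating a uniform'' remark.

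There is, however, one genuine arithmetic error in your calibration. With $C=\sqrt{\tfrac12\log(K/\alpha)}$ one has $e^{-C^2}=\sqrt{\alpha/K}$, not $\alpha/K$; so your choice of deviation threshold $C/\sqrt{2n_k^0}$ gives a per-set failure probability of $\sqrt{\alpha/K}$, and the union bound would then only yield $K\sqrt{\alpha/K}=\sqrt{\alpha K}$, which is useless (it can exceed $1$). The correct threshold is $\epsilon=\sqrt{\log(K/\alpha)/(2n_k^0)}=C/\sqrt{n_k^0}$, for which $e^{-2n_k^0\epsilon^2}=\alpha/K$ exactly and $n_k^0\epsilon=C\sqrt{n_k^0}$, which is precisely what produces the quadratic $x^2(1-t)-Cx\le\sum_{i\in R_k}\ind{p_i(X)>t}$ that you write down next; your intermediate display containing the factor $\sqrt{(1-t)/2}\cdot\sqrt{2}$ is garbled and follows from neither choice. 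In other words, the later algebra is consistent only with the corrected threshold, and once that one line is fixed the proof goes through. Your remaining remarks --- the $n_k^0=0$ corner case, the role of $\alpha/K<1/2$ in validating the one-sided Massart constant, and the need for every relaxation (passing from the null counts to the full counts over $R_k$, the floor, the minimum over $t$) to go in the direction of enlarging the bound --- are all correct and match the paper's treatment.
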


Combining Proposition~\ref{prop:calibration}  with Lemma~\ref{lem:Vtilde}, we obtain that, under the assumptions of Proposition~\ref{prop:calibration}, the bound
\begin{align}\label{Vnew}
\VDKW=\Vtilde \mbox{ given by \eqref{eq:VtildeK} with } \Rfam=(R_k,\zeta_k(X))_{k\in\setK} \mbox{ and  $\zeta_k(X)$ given by \eqref{equ:zeta},}
\end{align}
 satisfies \eqref{aim} and thus is a valid post hoc bound.

Proposition~\ref{prop:calibration} is proved in Section~\ref{sub:proof:calibration}. Note that $\zeta_k(X)\geq \lfloor\log(K/\alpha)/2\rfloor\geq1$ as soon as $\alpha\leq e^{-2}K$. Hence, this contrasts with previous approaches~\citep{blanchard2017post,goeman2011multiple}, for which $\zeta_k=0$ was included in the reference family.
%The order of this lower bound is moderate, e.g., $\log(10^4)/2\leq 5$. In the next section,
%For instance, for any $k$, whenever $|R_k|\geq \log(K/\alpha)/2$, we have $\VDKW(R_k)\geq \log(K/\alpha)/2$, which
This means that using this reference family induces a minimum cost.
In the next section, we will see that this cost is generally compensated by the accuracy of the joint estimation of $|R_k\cap \cH_0|$, $k\in \mathcal{K}$.

\begin{remark}
In practice, $\zeta_k(X)$ in \eqref{equ:zeta} can be computed as
\begin{align*}
\zeta_k(X)=s\wedge\min_{ 0\leq \l \leq s} \left\lfloor \frac{C}{2(1-p_{(\l)})} +
\left(\frac{C^2}{4(1-p_{(\l)})^2} + \frac{s-\l}{1-p_{(\l)}} \right)^{1/2} \right\rfloor^2,
\end{align*}
where $s=|R_k|$ and
$
0=p_{(0)}\leq p_{(1)} \leq \dots \leq p_{(s)}
$
are the ordered $p$-values of $\{p_{i}(X),i\in R_k\}$.
\end{remark}

\begin{remark}\label{rem:comparisonpreviouspi0}
With our notation, the previous  $(1-\alpha)$-confidence bound of  \citet{genovese2004stochastic} (Equation~(16) therein) corresponds to take
$$
\zeta_k^{GW}(X)= |R_k|\wedge \min_{t\in[0,1)} \left\lfloor\frac{\sum_{i\in R_k} \mathbf{1}{\{p_i(X) > t\}}+|R_k|^{1/2} C}{1-t} \right\rfloor.
%\frac{\#\{i\in R_k\::\:p_i>t\}+m^{1/2} \sqrt{(\log (2/\alpha))/2}}{1-t}.
$$
By using \eqref{equ:majdwedge} in Lemma~\ref{lem:trivial} with $a=1-t$, $b=C$, $c=\sum_{i\in R_k} \mathbf{1}{\{p_i(X) > t\}}$,  and $d=|R_k|$,
we can see that the quantity $\zeta_k^{GW}(X)$ is always  larger than the  $\zeta_k(X)$ given by \eqref{equ:zeta}.
Hence our result is a uniform improvement of  \citet{genovese2004stochastic}.
%
%$d\wedge \left(\frac{b}{2a} + \sqrt{\frac{b^2}{4a^2} + \frac{c}{a}}\right)^2 \leq c/a +d^{1/2} b/a.$
%
%
%
%
%
%However, when $R_k$ is large, the above deviation term is of order $|R_k|^{1/2}$, which is of larger order than
%the deviation term of the new bound \eqref{equ:zeta}.
%% Compared to the deviation In comparison to \eqref{equ:zeta}, the deviation term of the
%%our  \todo{PN: you mean Storey's ? unclear.} deviation term is of order $\left(\sum_{i\in R_k} \mathbf{1}{\{p_i(X) > t\}}\right)^{1/2}$ which can be smaller than $|R_k|^{1/2}$.
\end{remark}

\begin{remark}
  \label{rem:influence-alpha-DKW}
The local bounds $\zeta_k$ in \eqref{equ:zeta} depend on the target level $\alpha$ only through $C$, where $2C^2=\log (K/\alpha)$. Therefore, the post hoc bounds derived from Proposition~\ref{prop:calibration} are expected to depend only weakly on $\alpha$. This important point is illustrated in our numerical experiments (Section \ref{sec:num}), where this property is used to propose a hybrid post hoc bound taking the best of both the Simes and the DKW-based bounds.
\end{remark}

\subsection{Comparison to existing post hoc bounds}
\label{sec:comp-class-bounds}
To explore the benefit of the new reference family when the signal is localized, let us consider a stylized model where the signal is localized according to a regular partition
\begin{equation}\label{equ:ref:parti}
R_k=\{1+(k-1)s,\dots, ks\}, \:\:1\leq k \leq K,
\end{equation}
 composed of $K$ regions of equal size $s$. In particular, this reference family satisfies \eqref{equ:disjoint}.
Among the regions $R_k$,  only $R_1$ contains false nulls, and  $r\in(0,1)$ denotes the proportion of signal in $R_1$, that is
\begin{align}
  \label{eq:def-r}
  r & = |R_1\cap\mathcal{H}_1|/|R_1|.
\end{align}
The remaining regions contain no signal, that is $|R_k\cap\mathcal{H}_1|=0$, for $k\geq 2$.\\
% We let
%$$K_1= \left\lfloor \frac{m^{1-\beta}}{sd}  \right\rfloor= \frac{m^{1-\beta}}{sd} +O(1) ,$$
%where $\beta\in(0,1)$ is the sparsity parameter. This implies that:
%$$m_1= K_1 s d = m^{1-\beta} + O(1).$$

In addition, we consider an independent Gaussian one-sided setting where the false nulls have mean $\mu>0$, that is, we assume that $X_i\sim\mathcal{N}(0, 1)$ if $i\in\mathcal{H}_0$ and $X_i\sim\mathcal{N}(\mu, 1)$ if $i\in\mathcal{H}_1$, and the $p$-values are derived as $p_i(X)=\bar\Phi(X_i)$, $i \in \Nm$, where $\bar\Phi$ denotes the upper-tail of the standard normal distribution.

%$\bar\Phi(\cdot)=1-\Phi(\cdot)$ and $\Phi$ is the c.d.f. of a standard normal distribution.

\begin{proposition}\label{prop:comparison}
Let us consider the post hoc bounds $\VBonf$ \eqref{equ:BonfV}; $\VSimes$ \eqref{equ:SimesV} and the new post hoc bound $\VDKW$ given by \eqref{Vnew} and associated to the reference regions $R_k$ defined above.
% $\VDKW=V^*_{\Rfam} $ associated to the reference family
%$\Rfam=((R_k,\zeta_k(X))_{k\in\setK}$ with the $R_k$'s defined above and where the $\zeta_k$'s given by \eqref{equ:zeta}.
In the setting defined above, we have
\begin{align}
\frac{\E(\VDKW(R_1))}{|R_1|}
&\leq 1 \wedge \left(
1-r + 2 r \:\ol{\Phi}(\mu) + \frac{4C}{\sqrt{s}}\left(1+\frac{C}{\sqrt{s}}  \right) \right)
%1-d + 2 d \:\ol{\Phi}(\mu) + 6 C|R_1|^{-1/2};
\label{boundVDKW}
\\
\frac{\E( \VSimes(R_1))}{|R_1|}&\geq (1-r)(1-\alpha s/m) + r\: \ol{\Phi}(\mu -\ol{\Phi}^{-1}(\alpha s/m));\label{boundSimes}\\
% &\todo{\leq 1-d + 2 d \:\ol{\Phi}(\mu) + \frac{4C}{\sqrt{s}}\left(1+\frac{C}{\sqrt{s}}  \right) }  \\
\frac{\E(\VBonf(R_1))}{|R_1|}
&=  (1-r)(1-\alpha/m) + r\: \ol{\Phi}(\mu-\ol{\Phi}^{-1}(\alpha/m))\label{boundVBonf}.
\end{align}
\end{proposition}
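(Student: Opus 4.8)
The plan is to establish the three bounds separately, starting with the two easier exact/lower bounds and then tackling the DKW bound, which will be the main work.

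\textbf{Bounds \eqref{boundVBonf} and \eqref{boundSimes}.} First I would handle the Bonferroni bound. By definition $\VBonf(R_1)=\sum_{i\in R_1}\ind{p_i(X)>\alpha/m}$, so by linearity of expectation $\E(\VBonf(R_1))=\sum_{i\in R_1}\P(p_i>\alpha/m)$. For a true null $i\in\cH_0\cap R_1$ the $p$-value is exactly $\ol\Phi(X_i)$ with $X_i\sim\cN(0,1)$, hence uniform, giving $\P(p_i>\alpha/m)=1-\alpha/m$; for a false null $i\in\cH_1\cap R_1$ we have $p_i=\ol\Phi(X_i)$ with $X_i\sim\cN(\mu,1)$, so $\P(p_i>\alpha/m)=\P(\ol\Phi(X_i)>\alpha/m)=\P(X_i<\ol\Phi^{-1}(\alpha/m))=\ol\Phi(\ol\Phi^{-1}(\alpha/m)-\mu)=1-\ol\Phi(\mu-\ol\Phi^{-1}(\alpha/m))$... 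I would need to be careful with the symmetry $\Phi(-x)=\ol\Phi(x)$ to match the stated form $\ol\Phi(\mu-\ol\Phi^{-1}(\alpha/m))$. Using $|R_1\cap\cH_1|=r|R_1|$ and $|R_1\cap\cH_0|=(1-r)|R_1|$ and dividing by $|R_1|$ gives \eqref{boundVBonf} as an equality. For the Simes bound, $\VSimes(R_1)\ge \sum_{i\in R_1}\ind{p_i>\alpha s/m}$ by taking the single term $k=s$ in the minimum defining $\VSimes$ (recall $s=|R_1|$, and $k-1$ with $k=s$ contributes, but dropping $\ind{}$ terms outside $R_1$ only, so actually $\VSimes(R_1)=\min_k\{\sum_{i\in R_1}\ind{p_i>\alpha k/m}+k-1\}\ge \sum_{i\in R_1}\ind{p_i>\alpha s/m}$ needs $k-1\ge 0$; more carefully one lower-bounds by choosing the $k$ that is best, but any lower bound valid for all $k$ works, so in particular one fixes $k=s$ only if that gives the claimed expression — I'd instead note $\VSimes(R_1)\ge\min_k\sum_{i\in R_1}\ind{p_i>\alpha k/m}$ and argue the relevant scale). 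Taking expectations with the same Gaussian computation as above, now with threshold $\alpha s/m$, yields \eqref{boundSimes}.

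\textbf{Bound \eqref{boundVDKW}.} This is the crux. Since the reference family satisfies \eqref{equ:disjoint}, Corollary~\ref{cor:disjoint}(ii) gives $V^*_\Rfam(R_1)=\sum_{k=1}^K\zeta_k\wedge|R_1\cap R_k|+|R_1\setminus\bigcup_k R_k|=\zeta_1\wedge|R_1|$, because $R_1\cap R_k=\varnothing$ for $k\ge 2$ and $R_1\subset\bigcup_k R_k$. Also $\VDKW(R_1)=\Vtilde(R_1)\le\Vbar(R_1)\le\zeta_1\wedge|R_1|$, and in fact $\Vtilde(R_1)=\zeta_1$ here (or one simply uses $\VDKW(R_1)\le\zeta_1$). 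So the problem reduces to bounding $\E(\zeta_1(X))$. Plugging $t$ into the minimum in \eqref{equ:zeta} and using that the minimum is at most the value at any fixed $t$, I would choose a convenient deterministic $t=t_0$ (likely $t_0$ chosen so $1-t_0$ is of constant order, perhaps $t_0$ such that the Gaussian tail terms are controlled); then $\zeta_1(X)\le\big(\tfrac{C}{2(1-t_0)}+(\tfrac{C^2}{4(1-t_0)^2}+\tfrac{W}{1-t_0})^{1/2}\big)^2$ where $W=\sum_{i\in R_1}\ind{p_i>t_0}$. Expanding the square and using $(a+\sqrt{a^2+b})^2\le 4a^2+... $ — more precisely $(\sqrt{x}+\sqrt{y})^2\le 2x+2y$ or the sharper $\big(\tfrac{C}{2a}+\sqrt{\tfrac{C^2}{4a^2}+\tfrac{W}{a}}\big)^2 = \tfrac{C^2}{2a^2}+\tfrac{W}{a}+\tfrac{C}{a}\sqrt{\tfrac{C^2}{4a^2}+\tfrac{W}{a}}$, and bounding $\sqrt{\tfrac{C^2}{4a^2}+\tfrac{W}{a}}\le\tfrac{C}{2a}+\sqrt{W/a}$, I'd get $\zeta_1(X)\le \tfrac{C^2}{a^2}+\tfrac{W}{a}+\tfrac{C}{a}\sqrt{W/a}$ with $a=1-t_0$. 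Then take expectations: $\E(W)=\sum_{i\in R_1}\P(p_i>t_0)=(1-r)|R_1|(1-t_0)+r|R_1|\P(\ol\Phi(X_i)>t_0)$ with $X_i\sim\cN(\mu,1)$, and $\P(\ol\Phi(X_i)>t_0)=\ol\Phi(\mu-\ol\Phi^{-1}(t_0))$; choosing $t_0$ small (so $\ol\Phi^{-1}(t_0)$... hmm) the cleanest choice giving the displayed RHS $1-r+2r\ol\Phi(\mu)$ suggests $t_0$ is chosen so $1-t_0$ is close to $1$ and $\ol\Phi(\mu-\ol\Phi^{-1}(t_0))\le 2\ol\Phi(\mu)$, which holds for $t_0\le 1/2$ say since then $\ol\Phi^{-1}(t_0)\ge 0$ and the tail only decreases — wait, $\ol\Phi^{-1}(t_0)\ge 0$ when $t_0\le 1/2$ so $\mu-\ol\Phi^{-1}(t_0)\le\mu$, making $\ol\Phi$ larger, not smaller. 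So instead I'd pick $t_0=1/2$ and bound $\ol\Phi(\mu-\ol\Phi^{-1}(1/2))=\ol\Phi(\mu)$ exactly since $\ol\Phi^{-1}(1/2)=0$, giving $a=1/2$ and $\E(W)=\tfrac12(1-r)|R_1|+r|R_1|\ol\Phi(\mu)\le\tfrac12|R_1|(1-r+2r\ol\Phi(\mu))$. Then $\E(\zeta_1)/|R_1|\le \tfrac{W\text{-term}}{a|R_1|}+\text{error} = (1-r+2r\ol\Phi(\mu))+\tfrac{C^2}{a^2|R_1|}+\tfrac{C}{a|R_1|}\sqrt{\E(W)/a}$, and with $a=1/2$, $|R_1|=s$, and $\sqrt{\E(W)/a}\le\sqrt{2s}$-ish, collecting constants and applying Jensen ($\E\sqrt{W}\le\sqrt{\E W}$) should produce the error term $\tfrac{4C}{\sqrt s}(1+\tfrac{C}{\sqrt s})$. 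Finally intersect with $1$ since $\VDKW(R_1)\le|R_1|$ always.

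\textbf{Main obstacle.} The delicate part is the DKW bound: getting the algebraic manipulation of the square root expression in \eqref{equ:zeta} to collapse exactly into the stated constant $\tfrac{4C}{\sqrt s}(1+\tfrac{C}{\sqrt s})$, including the correct handling of the $\wedge|R_1|$ truncation, the right choice of the evaluation point $t_0$ (the value $t_0=1/2$ is my best guess for producing precisely the $2r\ol\Phi(\mu)$ term), and the Jensen step for $\E\sqrt{W}$. I would likely isolate the elementary inequality $\big(\tfrac{C}{2a}+\sqrt{\tfrac{C^2}{4a^2}+\tfrac{x}{a}}\big)^2\le \tfrac{x}{a}+\tfrac{2C}{a}\sqrt{\tfrac{x}{a}}+\tfrac{2C^2}{a^2}$ as a sublemma (it follows from $(u+v)^2\le u^2+v^2+2uv$ and $\sqrt{u^2+w}\le u+\sqrt w$), apply it with $x=W$, $a=1/2$, take expectations, and bound each error term using $\E W\le s$ and $\E\sqrt W\le\sqrt s$.
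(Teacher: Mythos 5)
Your overall route is the same as the paper's: \eqref{boundVBonf} by direct computation of the marginal probabilities (with the symmetry $1-\ol{\Phi}(x-\mu)=\ol{\Phi}(\mu-x)$), and \eqref{boundVDKW} by plugging the deterministic point $t=1/2$ into the minimum in \eqref{equ:zeta} (so that $\ol{\Phi}^{-1}(1/2)=0$ produces the $2r\,\ol{\Phi}(\mu)$ term), expanding the square via $\sqrt{x+y}\leq\sqrt{x}+\sqrt{y}$, and using concavity for $\E[N^{1/2}]\leq(\E N)^{1/2}$ together with $\E N\leq s(1-t)$. This is exactly the paper's proof, and your sharper intermediate expansion $\zeta_1\leq C^2/a^2+W/a+(C/a)\sqrt{W/a}$ with $a=1/2$ does recover (in fact slightly beats) the constant $\frac{4C}{\sqrt{s}}\left(1+\frac{C}{\sqrt{s}}\right)$; note however that your stand-alone sublemma with the $2C^2/a^2$ term is lossier and would only give $\frac{4C}{\sqrt{s}}\left(1+\frac{2C}{\sqrt{s}}\right)$, so you should keep the sharper version.

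The one genuine gap is the Simes step. Taking ``the single term $k=s$'' bounds the minimum from \emph{above}, not below; and your fallback $\VSimes(R_1)\geq\min_{k}\sum_{i\in R_1}\ind{p_i>\alpha k/m}$ is true but too weak, since that minimum is attained at $k=m$ and yields the threshold $\alpha$ rather than $\alpha s/m$, hence a strictly smaller lower bound than \eqref{boundSimes}. The correct argument splits on $k$: for $k\leq s$ one has $\alpha k/m\leq\alpha s/m$, so $\sum_{i\in R_1}\ind{p_i>\alpha k/m}+k-1\geq\sum_{i\in R_1}\ind{p_i>\alpha s/m}$; for $k>s$ one has $k-1\geq s\geq\sum_{i\in R_1}\ind{p_i>\alpha s/m}$. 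In either case each term of the minimum is at least $\sum_{i\in R_1}\ind{p_i>\alpha s/m}$, and taking expectations gives \eqref{boundSimes}.
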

This proposition is proved in Section~\ref{sub:proof:comparison}. In particular, combining \eqref{boundVDKW} and \eqref{boundSimes} yields
\begin{equation}\label{equ:ratiobound}
\frac{\E(\VDKW(R_1))}{\E( \VSimes(R_1))}
\leq
\frac{1 \wedge \left(
1-r + 2 r \:\ol{\Phi}(\mu) + \frac{4C}{\sqrt{s}}\left(1+\frac{C}{\sqrt{s}}  \right)\right)}{(1-r)(1-\alpha s/m) + r\: \ol{\Phi}(\mu -\ol{\Phi}^{-1}(\alpha s/m))}.
\end{equation}
This ratio is displayed in Figure~\ref{IllustrationBound} for a choice of model parameters. The new bound can substantially improve the Simes bound over a wide range of effect sizes.

\begin{figure}[htp!]
  \centering
 \includegraphics[scale=0.5]{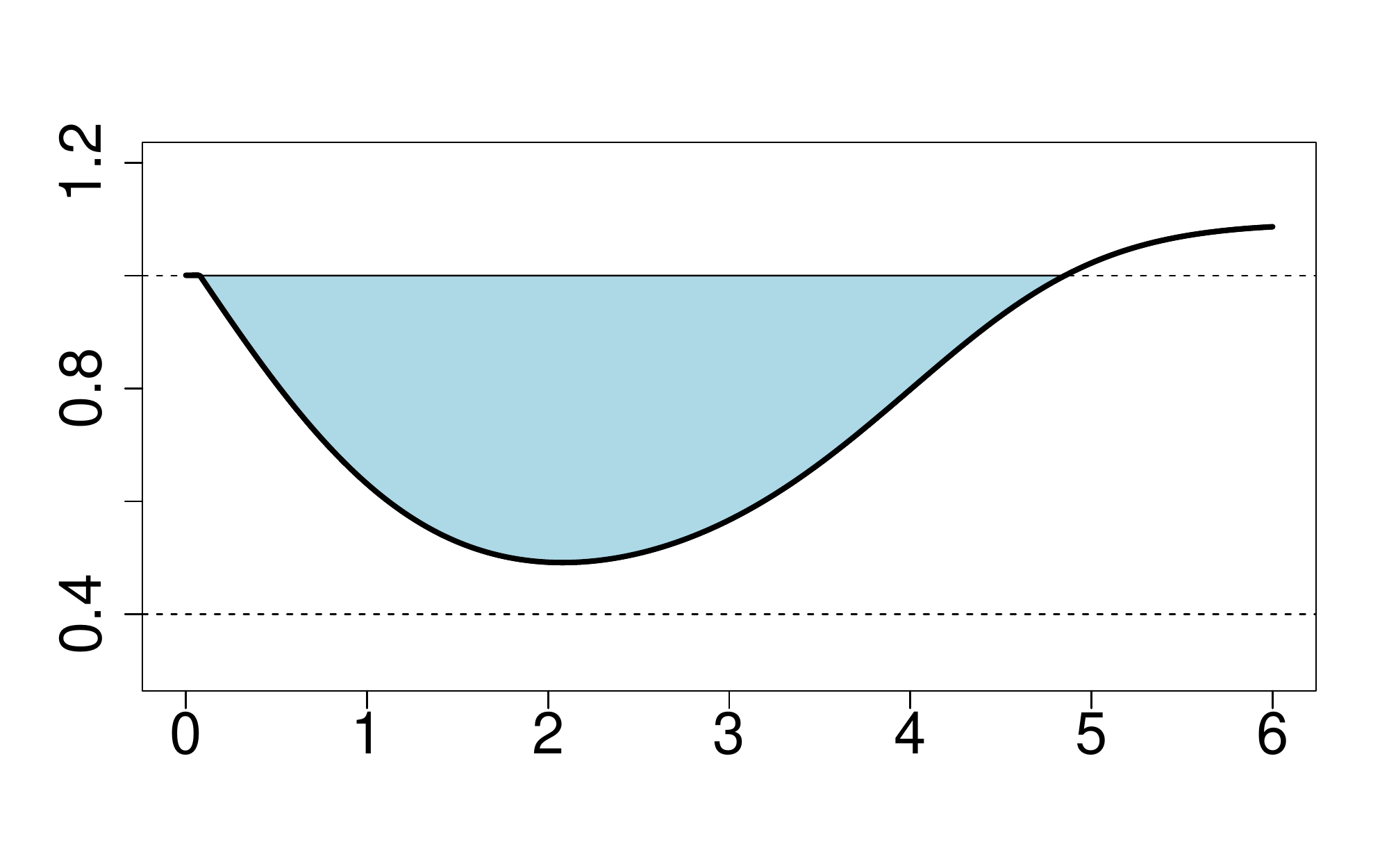}
 \vspace{-1cm}
\caption[Upper bound of the ratio between the new bound and the Simes bound against effect size]{$Y$-axis: upper bound of the ratio between the new bound and the Simes bound, see \eqref{equ:ratiobound}.
$X$-axis: effect size $\mu$. $m=10^7$, $s=m^{2/3}$, $K=m/s$, $r=3/5$, $\alpha=0.1$.}  \label{IllustrationBound}
%\caption[Ratio of~\eqref{boundVDKW} and~\eqref{boundSimes} versus effect size]{
%$Y$-axis: ratio between the upper bound in~\eqref{boundVDKW} of the new bound and the lower bound in~\eqref{boundSimes} of the Simes bound.
%$X$-axis: effect size $\mu$. $m=10^7$,
%$s=m^{2/3}$, $K=m/s$, $r=3/5$, $\alpha=0.1$.}  \label{IllustrationBound}
\end{figure}

This improvement can also be put forward by an asymptotic approach.

\begin{corollary}
Let us consider the framework of Proposition~\ref{prop:comparison}. In the asymptotic setting in $m$ where $s$ tends to infinity with $s\gg\log K$ and $\mu$ tends to infinity with $\mu -  \ol{\Phi}^{-1}(\alpha/m) \rightarrow -\infty$, we have
\begin{align*}
%\limsup_m \left\{\frac{\E(\VDKW(R_1))}{\E(\VBonf(R_1))} \right\} \leq 1-d.
\limsup_m \left\{ \frac{\E(\VDKW(R_1))}{|R_1|} \right\} \leq 1-r, \:\:\mbox{ and }\:\: \limsup_m \left\{ \frac{\E(\VBonf(R_1))}{|R_1|} \right\}= 1.
\end{align*}
If moreover $s \ll m$ (i.e., $K\rightarrow\infty$) and $\mu -  \ol{\Phi}^{-1}(\alpha s/m) \rightarrow -\infty$, we have
\begin{align*}
\limsup_m \left\{ \frac{\E(\VDKW(R_1))}{|R_1|} \right\} \leq 1-r, \:\:\mbox{ and }\:\: \limsup_m \left\{ \frac{\E(\VSimes(R_1))}{|R_1|} \right\}= 1.
\end{align*}
%\begin{align*}
%\limsup_m \left\{\frac{\E(\VDKW(R_1))}{\E(\VSimes(R_1))} \right\} \leq 1-d.
%\end{align*}
\end{corollary}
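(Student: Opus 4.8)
The plan is to derive the asymptotic statements directly from the explicit bounds in Proposition~\ref{prop:comparison}, by a routine limit analysis of the three right-hand sides in \eqref{boundVDKW}--\eqref{boundVBonf}.

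First I would treat the DKW bound. Inequality \eqref{boundVDKW} gives $\E(\VDKW(R_1))/|R_1| \le 1\wedge\left(1-r + 2r\,\ol\Phi(\mu) + \frac{4C}{\sqrt s}\left(1+\frac{C}{\sqrt s}\right)\right)$, with $2C^2=\log(K/\alpha)$, so $C/\sqrt s = \sqrt{\log(K/\alpha)/(2s)}\to 0$ under $s\gg\log K$ (note $\log(K/\alpha)=\log K + \log(1/\alpha)$ and $\log(1/\alpha)$ is fixed, so $\log(K/\alpha)\sim\log K = o(s)$). Since $\mu\to\infty$ forces $\ol\Phi(\mu)\to 0$, the bracketed quantity tends to $1-r$, whence $\limsup_m \E(\VDKW(R_1))/|R_1| \le 1-r$. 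This argument uses only $s\gg\log K$ and $\mu\to\infty$, hence applies verbatim in both regimes of the corollary, so nothing extra is needed for the two $\VDKW$ statements.

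Second I would handle the Bonferroni and Simes lower bounds. From \eqref{boundVBonf}, $\E(\VBonf(R_1))/|R_1| = (1-r)(1-\alpha/m) + r\,\ol\Phi(\mu-\ol\Phi^{-1}(\alpha/m))$. Here $\alpha/m\to 0$ so $(1-r)(1-\alpha/m)\to 1-r$; and the hypothesis $\mu - \ol\Phi^{-1}(\alpha/m)\to-\infty$ gives $\ol\Phi(\mu-\ol\Phi^{-1}(\alpha/m))\to 1$, so the whole expression tends to $(1-r)+r=1$. Combined with the trivial upper bound $\VBonf(R_1)\le|R_1|$, this yields $\lim_m \E(\VBonf(R_1))/|R_1| = 1$, hence the $\limsup$ equals $1$. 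The Simes case is identical using \eqref{boundSimes}: under the additional hypotheses $K\to\infty$ (so $\alpha s/m=\alpha/K\to 0$) and $\mu-\ol\Phi^{-1}(\alpha s/m)\to-\infty$, we get $(1-r)(1-\alpha s/m)\to 1-r$ and $\ol\Phi(\mu-\ol\Phi^{-1}(\alpha s/m))\to 1$, so $\E(\VSimes(R_1))/|R_1|\to 1$, and again the matching upper bound $\VSimes(R_1)\le|R_1|$ pins the $\limsup$ at $1$.

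There is no real obstacle here; the corollary is essentially a packaging of Proposition~\ref{prop:comparison} with elementary limits. The only point deserving a line of care is checking that $\log(K/\alpha) = o(s)$ follows from the stated hypothesis $s \gg \log K$ (since $\alpha$ is a fixed constant, $\log(1/\alpha)$ is negligible), which makes the DKW remainder term $\frac{4C}{\sqrt s}(1+\frac C{\sqrt s})$ vanish; and noting that $\limsup$ rather than $\lim$ is used for $\VDKW$ because \eqref{boundVDKW} is only an upper bound, whereas for $\VBonf$ and $\VSimes$ the two-sided squeeze (Proposition bound from one side, $|R_1|$ from the other) gives genuine convergence.
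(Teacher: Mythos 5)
Your proof is correct and follows exactly the route the paper intends: the corollary is stated as an immediate consequence of Proposition~\ref{prop:comparison} (the paper gives no separate proof), and your elementary limit analysis of \eqref{boundVDKW}--\eqref{boundVBonf} — including the observations that $C/\sqrt{s}\to 0$ under $s\gg\log K$ and that the trivial bound $\VSimes(R_1)\le|R_1|$ is needed to turn the one-sided inequality \eqref{boundSimes} into an exact limit — fills in precisely the omitted details.
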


In particular, this corollary establishes that the order of the new bound can improve the Simes bound by a factor $1-r$.

\section{Numerical experiments}\label{sec:num}

\subsection{Setting}
In this section we perform  numerical experiments to compare
our new post hoc bound $\VDKW$ \eqref{Vnew}  with Simes post hoc bound \eqref{equ:SimesV}.
%our proposed post-hoc bounds $V^*_{\Rfam}$ to bounds derived from the Simes inequality and given by \eqref{equ:SimesV}.
Let $q$ be some fixed integer, say larger than $1$. We consider two versions of our new bound:
\begin{itemize}
\item
The first version of our post hoc bound, denoted $V_{\rm part}$, is defined by \eqref{Vnew} in which the reference family $\Rfam^{\rm part}$ is the regular partition of $\Nm$ given by \eqref{equ:ref:parti} for $K^{\rm part}=2^q$ ($s=m/2^q$ being assumed to be an integer).
\item
The second version of our post hoc bound, denoted $V_{\rm tree}$, is defined similarly by \eqref{Vnew}, but  the reference family $\Rfam^{\rm tree}$ is given this time by the perfect binary tree whose leaves are the elements of $\Rfam^{\rm part}$. Hence, by using the notation of Lemma~\ref{lm:partition}, this means $P_k=\{1+(k-1)s,\dots, ks\},$ $1\leq k \leq 2^q$. The cardinal of the reference family is thus $K^{\rm tree} = 2^{q+1}-1$.
\end{itemize}
The true/false null hypothesis configuration is as follows:
the false null hypotheses are contained in $P_k$ for $1 \leq k \leq K_1$, for some fixed value of $K_1$. The quantity $r$ is defined similarly as in \eqref{eq:def-r}, as the fraction of false null hypotheses in those $P_k$, %partition pieces
 and is set to $r \in \{0.5, 0.75, 0.9, 1\}$. All of the other partition pieces only contain true null hypotheses. Finally,  the true null $p$-values are distributed as i.i.d. $\cN(0,1)$, and false null $p$-values are distributed as i.i.d. $\cN(\bar{\mu}, 1)$, where $\bar{\mu}$ is a fixed value in $\{2, 3, 4\}$.  This construction is illustrated in Figure~\ref{fig:graph-simu} for $q=3$ (leading to $K^{\rm part}=8$ and $K^{\rm tree}=15$) and $K_1=2$. In our experiments, we have chosen $q=7$ and $s=100$ (corresponding to $K^{\rm part}=128$ and $K^{\rm tree}=255$ and $m=12800$), and $K_1=8$.

\begin{figure}[h!]
\begin{center}
%\begin{minipage}{0.45\textwidth}
\begin{tikzpicture}[scale=0.85]
 % dÃ©finition des styles
 \tikzstyle{quadri}=[circle,draw,text=black, thick]
 \tikzstyle{quadriH1}=[circle,draw,text=black,fill=pink, thick]
 \tikzstyle{estun}=[<-,>=latex,very thick]

% les n?uds
 \node[quadri] (R18) at (-0.5,6) {$P_{1:8}$};

 \node[quadri] (R14) at (-3.5,4) {$P_{1:4}$};
 \node[quadri] (R58) at (2.5,4) {$P_{5:8}$};

 \node[quadriH1] (R12) at (-5,2) {$P_{1:2}$};
 \node[quadri] (R34) at (-2,2) {$P_{3:4}$};
 \node[quadri] (R56) at (1,2) {$P_{5:6}$};
 \node[quadri] (R78) at (4,2) {$P_{7:8}$};

 \node[quadriH1] (R1) at (-6,0) {$P_{1}$};
 \node[quadriH1] (R2) at (-4,0) {$P_{2}$};
 \node[quadri] (R3) at (-3,0) {$P_{3}$};
 \node[quadri] (R4) at (-1,0) {$P_{4}$};
 \node[quadri] (R5) at (0,0) {$P_{5}$};
 \node[quadri] (R6) at (2,0) {$P_{6}$};
 \node[quadri] (R7) at (3,0) {$P_{7}$};
 \node[quadri] (R8) at (5,0) {$P_{8}$};

% the ref families
\node[draw,very thick,label=left:{Partition},dotted,fit=(R1)(R2)(R3)(R4)(R5)(R6)(R7)(R8)] (part) {};
\node[draw,label=left:{Tree},fit=(R1)(R2)(R3)(R4)(R5)(R6)(R7)(R8)(R12)(R34)(R56)(R78)(R14)(R58)(R18)] (tree) {};

%  \node[draw,fill=pink,opacity=0.4, label=below:{$K_1$},fit=(R1)(R2)] (K1){};
%  \draw[thick, decoration={brace, mirror, raise=0.5cm}, decorate]
%    (part) -- node[below = 12pt] {x} (R2);

 % les flÃšches
  \draw[estun] (R1)--(R12);
  \draw[estun] (R2)--(R12);
  \draw[estun] (R3)--(R34);
  \draw[estun] (R4)--(R34);
  \draw[estun] (R5)--(R56);
  \draw[estun] (R6)--(R56);
  \draw[estun] (R7)--(R78);
  \draw[estun] (R8)--(R78);

  \draw[estun] (R12)--(R14);
  \draw[estun] (R34)--(R14);
  \draw[estun] (R56)--(R58);
  \draw[estun] (R78)--(R58);

  \draw[estun] (R14)--(R18);
  \draw[estun] (R58)--(R18);

\end{tikzpicture}
\end{center}
\caption[Partition and perfect binary tree structures used in simulations]{Partition and perfect binary tree structures used in simulations, here with $q=3$ and $K_1=2$ ($K^{\rm part}=8$ and $K^{\rm tree}=15$). The pink nodes are those containing some signal.}
\label{fig:graph-simu}
\end{figure}
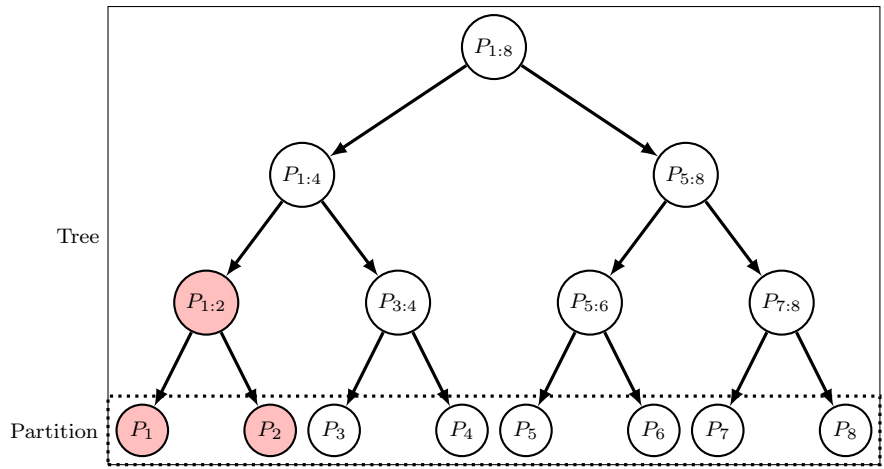

We also performed numerical experiments with $s \in \{10, 20, 50\}$ and $K_1 \in \{1, 4, 16\}$, and with Poisson- and Gaussian-distributed $\bar{\mu}$. Because the results are qualitatively similar, we only report the above-described setting.

\subsection{Comparing confidence envelopes}

%One possible application of post hoc bounds is to build confidence envelopes on the number of true or false discoveries among the most significant hypotheses.

One possible way to evaluate the performance of post hoc bounds is to consider the associated confidence envelopes on the number of true discoveries among the most significant hypotheses. Formally, for $k = 1, \dots, m$, we let $S_k = \{i_1, \dots, i_k\}$, where $i_j$ is the index of the $j^{\rm th}$ smallest $p$-value. Note that focusing on such sets is \textit{a priori} favorable to the Simes bound, for which the reference family are among the $S_k$. In Figure~\ref{fig:upper-bounds-alpha=0.05}, each panel corresponds to a particular choice of the model parameters $d$ (in rows) and $\bar{\mu}$ (in columns). Each panel compares the actual number of true positives
%$(k-V(S_k))$,
 $(k-|\cH_0\cap S_k|)$,
 $k = 1, \dots, m$ (labelled ``Oracle'') to  post hoc bounds of the form
%$(k-V^*(S_k))$,
$(k-V(S_k))$,
$k = 1, \dots, m$, where $V$ is $V_{\rm Simes}$, $V_{\rm part}$, or $V_{\rm tree}$.
%either the Simes bound  $V_{\rm Simes}$ given by \eqref{equ:SimesV}, the ``tree'' bound $V_{\rm tree}$ corresponding to the reference family $\Rfam^{\rm tree}$ of the full tree, or the ``partition'' bound $V_{\rm part}$ corresponding to the reference family $\Rfam^{\rm part}$.
 In this figure, the confidence level is set to $1-\alpha = 95\%$.
\begin{figure}[!h]
  \centering
\includegraphics[width=0.99\textwidth]{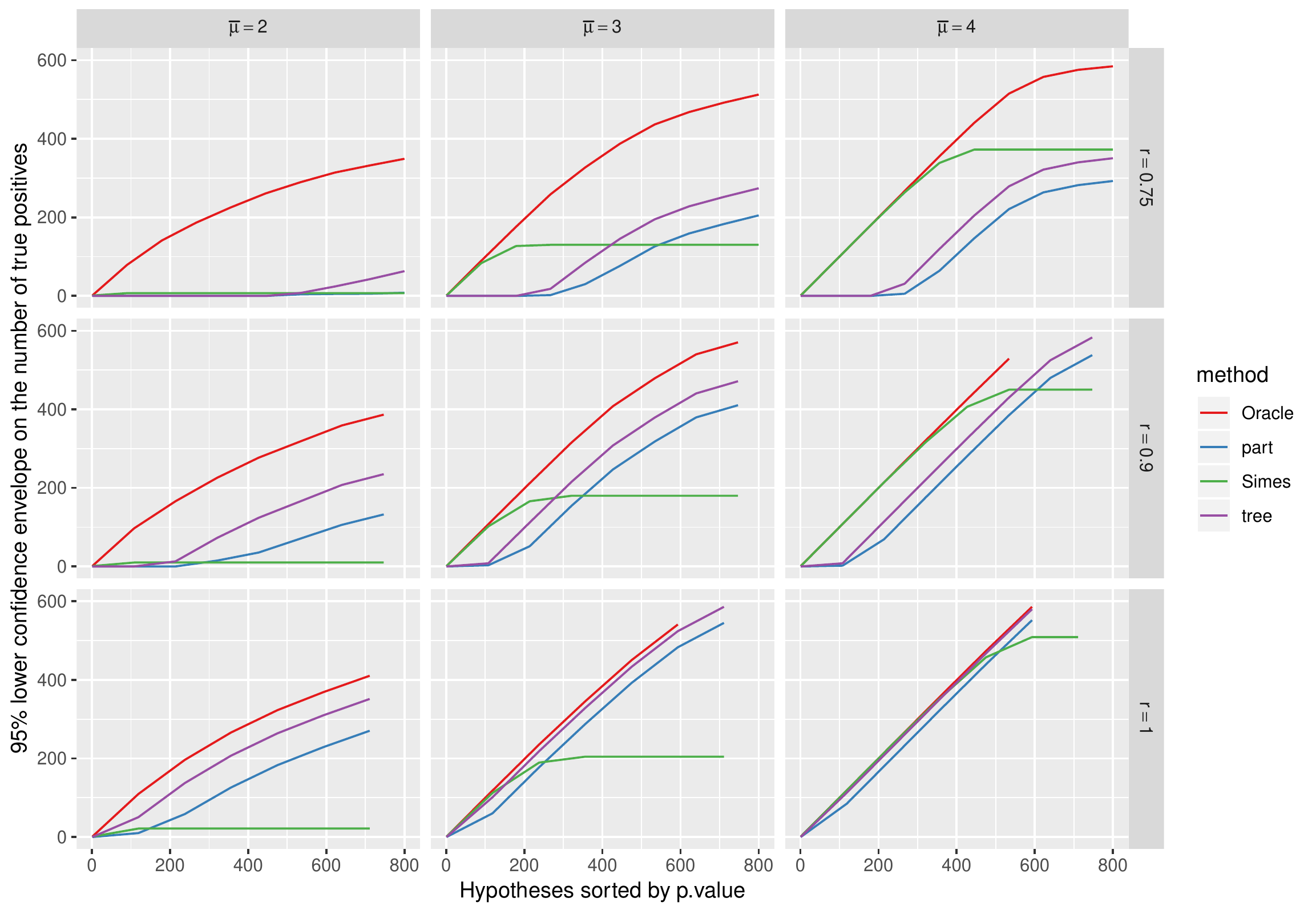}
\caption[95\texorpdfstring{\%}{pourcent} lower confidence envelopes from Simes inequality and from the proposed methods]{95\% lower confidence envelopes on the number of true positives obtained from Simes inequality and from the proposed methods are compared to the actual (Oracle) number of true positives.}
\label{fig:upper-bounds-alpha=0.05}
\end{figure}
%\todo{say somewhere that sorting the hypotheses by $p$-value is favorable to Simes.}

The chosen model parameters span a wide range of situations between very low and very high signal. For very low signal ($\bar{\mu}=2, r=0.75$, top-left panel), all the bounds are trivial, i.e. provide $V(S_k)$ close to $\abs{S_k} (=k)$. As expected, all the bounds get sharper as the signal to noise ratio increases, that is, as $\bar{\mu}$ or $r$ increase, and for very high signal ($\bar{\mu}=4, r=1$, bottom-right panel), all the bounds are very close to the actual number of true positives. The tree-based bound dominates the partition-based bound, which is expected because in this particular experiment, the regions $P_k$ containing signal are adjacent (see Figure~\ref{fig:graph-simu}), and the
multiscale nature of the tree-based bound allows it to take advantage of large-scale clusters. When the signal regions are not adjacent, these two bounds are very close (additional numerical experiments not shown). Our proposed bounds are more sensitive to the proportion of signal in each active region, while the Simes bound is more sensitive to the strength of the signal in those regions. As a result, none of the Simes and the ``tree'' bound is uniformly better than the other one. The Simes bound is typically sharper than the ``tree'' bound for small values of $k$, but becomes more conservative for larger values of $k$.  This is expected, because the ``tree'' bound is based on  \emph{estimating the proportion of} non-null items, while the Simes bound is based on  \emph{pinpointing} non-null items.

\subsection{Hybrid approach}

An interesting question raised in Section~\ref{sec:calibr-zeta_k-dkw} (Remark~\ref{rem:influence-alpha-DKW}) is how these bounds are influenced by the target confidence level, which is fixed to  $1-\alpha = 95\%$ in Figure~\ref{fig:upper-bounds-alpha=0.05}. In Figure~\ref{fig:upper-bounds-alphas} we compare the bounds obtained across values of $\alpha$ (corresponding to different line types) for $\bar{\mu} \in \{3, 4\}$ and $r \in \{0.75, 0.9\}$. The influence of $\alpha$ on the Simes bound is quite substantial. This is consistent with the shape of the bound~\eqref{equ:SimesV}, the $p$-values are directly compared to $\alpha$. The influence of $\alpha$ on the bounds derived from \eqref{equ:zeta} is much weaker, as expected from  Remark \ref{rem:influence-alpha-DKW}. In particular, the envelopes derived from the ``tree'' method are very close  to each other when $\alpha$ varies from $0.001$ to $0.05$.
\begin{figure}[!h]
  \centering
\includegraphics[width=0.99\textwidth]{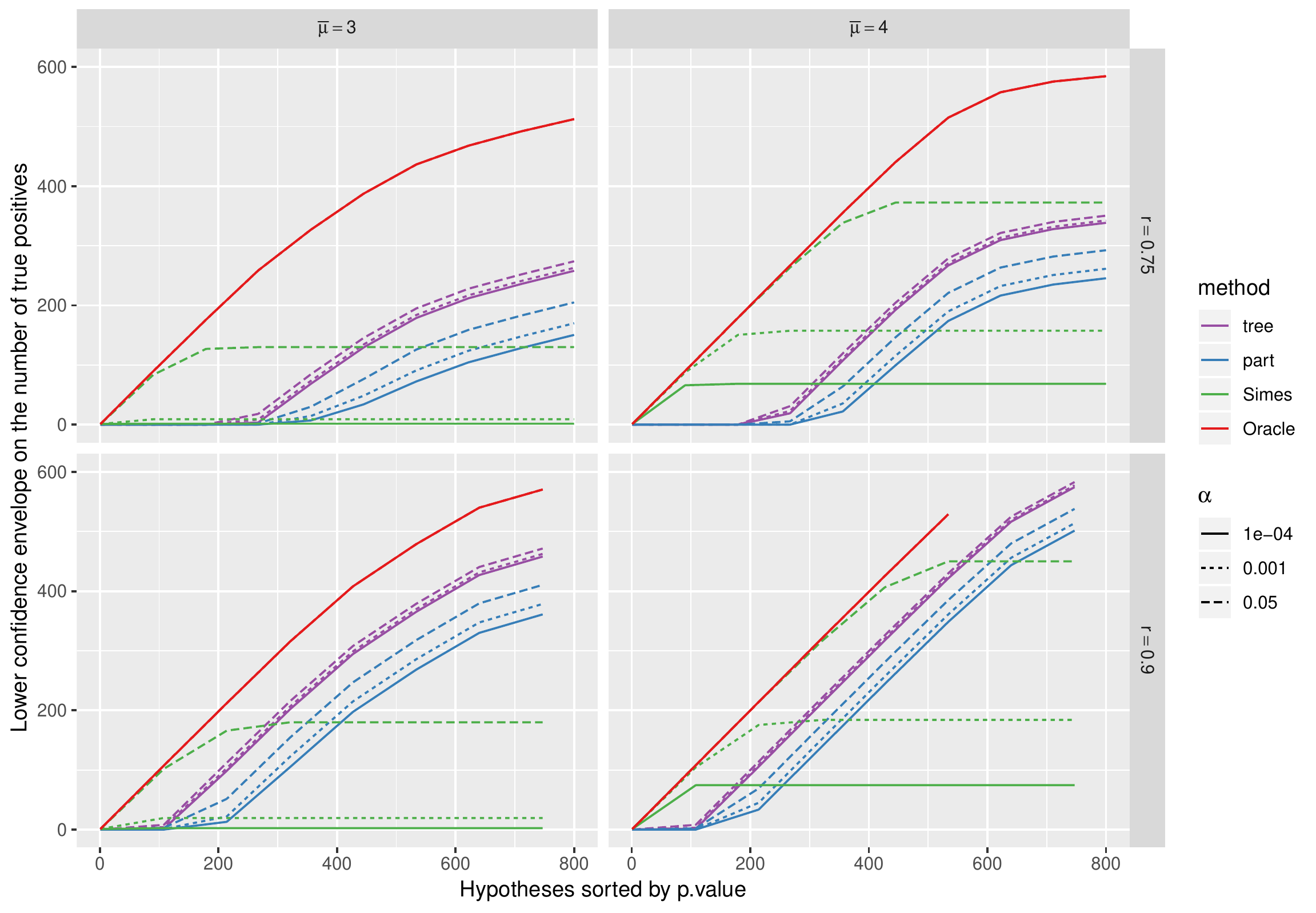}
\caption[Influence of the target level parameter \texorpdfstring{$\alpha$}{alpha} on upper confidence envelopes]{Influence of the target level parameter $\alpha$ on upper confidence envelopes on the number of true positives.}
\label{fig:upper-bounds-alphas}
\end{figure}
These striking differences suggest to introduce hybrid confidence envelopes that could take advantage of the superiority of the Simes bound on sets $S_k$ for small $k$ with that of the DKW-tree-based bound on sets $S_k$ for larger $k$. For a fixed $\gamma \in [0,1]$, let us define the bound $\Vhyb^\gamma$ as follows.  For $S \subset \Nm$,
\begin{align*}
  \Vhyb^\gamma(\alpha, S) = \min \paren{\VSimes((1-\gamma) \alpha, S) , V_{\rm tree}(\gamma\alpha, S)},
\end{align*}
where the notation in the bounds explicitly acknowledges the dependence of the bounds in the target level $\alpha$. By an union bound, $\Vhyb^\gamma(\alpha, \cdot)$ is a $(1-\alpha)$-level post hoc bound.
%\todo{Ã  formaliser dans un corollaire ici ou dans 4.1, histoire de faire ressortir le rÃ©sultat?}
Figure \ref{fig:upper-bounds-hybrid} gives an illustration with $\alpha = 0.05$ and $\gamma = 0.02$.  In this case, the hybrid envelope is the minimum of the Simes envelope at level $(1-\gamma)\alpha = 0.049$ and the DKW-tree-based envelope at level $0.001$. Because $(1-\gamma)\alpha$ is very close to $\alpha$, the confidence envelope $ \Vhyb^{0.02}$ is essentially equivalent to the Simes-based confidence envelope for small $k$; for larger values of $k$,  $ \Vhyb^{0.02}$ is only slightly worse than the DKW-tree-based confidence envelope at level $\gamma \alpha = 0.001$.
%We assess the quality of a post hoc bound as follows. For $k = 1, \dots, m$, let us consider $S_k = \{i_1, \dots, i_k\}$, where the $i_j$ are distinct elements of $\Nm$. In particular, we have $\abs{S_k}=k$ for all $k$ and $S_m = \{1, \ldots , m\}$. In our experiments, we ch

%In order to evaluatein nested sets of hypotheses of the form $S_k = \{i_1, \dots, i_k\}$ for $1 \leq u \leq k$.  Two natural such increasing sets are (1) the sets of the $k$ hypotheses with smallest $p$-values,

\begin{figure}[!h]
  \centering
\includegraphics[width=0.99\textwidth]{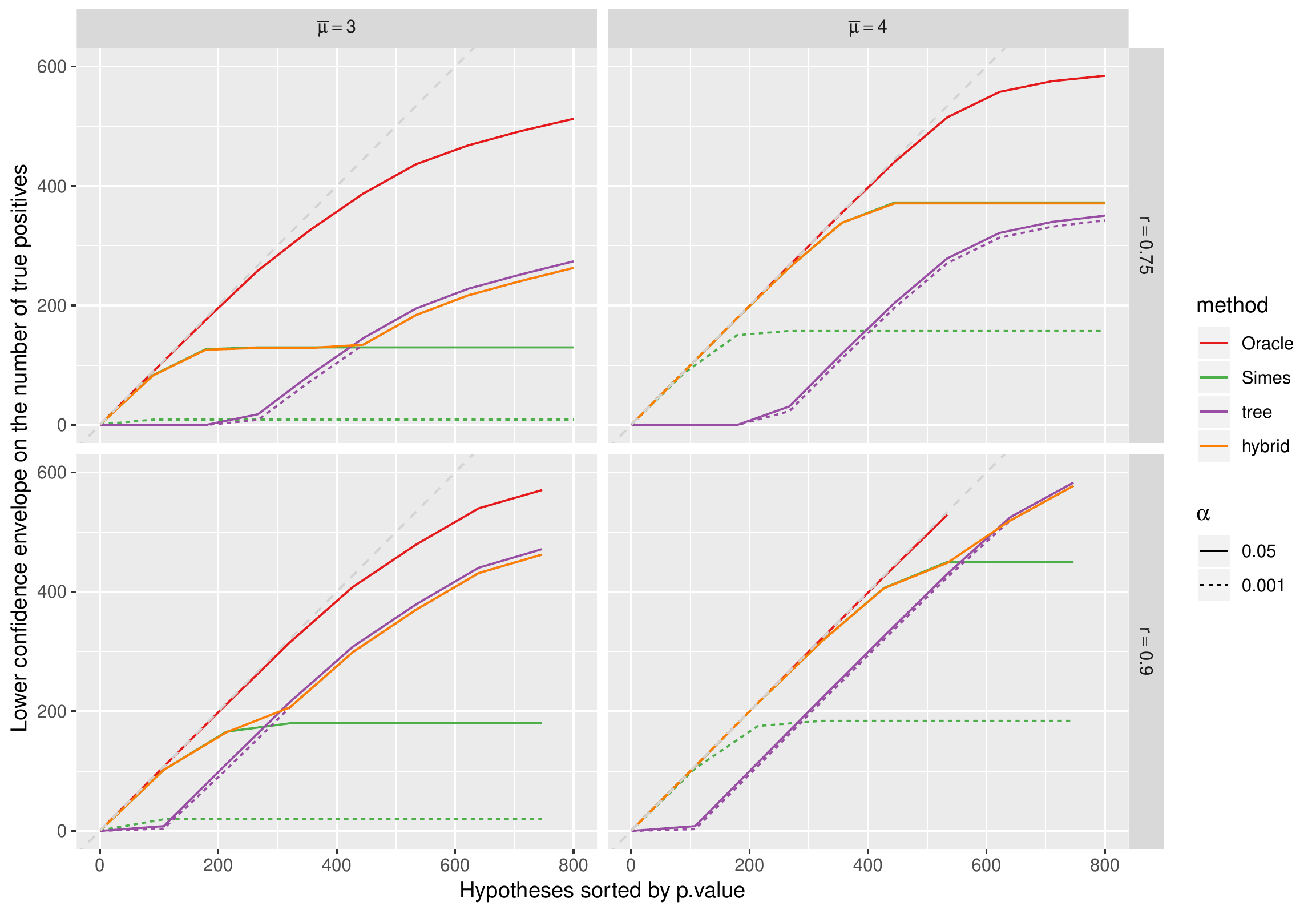}
\caption[Combining Simes and tree-based upper confidence bounds]{Combining Simes and tree-based confidence envelopes on the number of true positives into a hybrid confidence envelope.}
\label{fig:upper-bounds-hybrid}
\end{figure}

\section{Discussion}\label{sec:dis}

\subsection{Comparison to \citet{meijer2015region}}\label{sec:compaGoeman}

%\textcolor{magenta}{
Since our aim is similar to the one of \citet{meijer2015region} (denoted MKG below for short), let us make a short qualitative comparison between MKG and our study. 
 First, while both approaches are based on graph-structured subsets $\{R_k,k\in\setK \}$, the geometrical shapes of the nodes $R_k$ are different:  the nodes in MKG correspond to all possible consecutive intervals, possibly overlapping, while our regions are based on partitioned regions at different resolutions. Our approach avoids redundancies of the tests but is suitable when the signal is structured according to the pre-specified partition structure, and may lead to a less accurate bound otherwise. This in turn impacts the way the local pieces of information are combined. The MKG approach uses a sequential, top-down algorithm, with an $\alpha$-recycling method (that allows, for instance, to spend the same nominal level $\alpha$ both for a parent and its child). By contrast, our approach uses a bottom-up algorithm, with an overall nominal level adjusted by a simple overall union bound, which is generally conservative but seems fair here as the nodes are disjoint (at each resolution). 
 %}
 
%\textcolor{magenta}{
Second, the criteria used are different: MKG  focus on simultaneous FWER control of local tests of intersections of null hypotheses $\cap_{i\in R_k} H_{0,i}$, $k\in\setK$, while our statistical criterion ensures with high probability $|\cH_0\cap R_k| \leq \zeta_k$, for all $k\in\setK$, for some bounds $\zeta_k$. As already noted in BNR (see the supplementary file therein),  the two approaches coincide when  $\zeta_k=|R_k|-1$, because $|\cH_0\cap R_k| > |R_k|-1$ is equivalent to the fact that $\cap_{i\in R_k} H_{0,i}$ is true. Hence, a family  $\{R_k,k\in\setK \}$ violating $|\cH_0\cap R_k| > |R_k|-1$ for some $k$ will also wrongly reject $\cap_{i\in R_k} H_{0,i}$ for some $k$. However, when using another type of $\zeta_k$, such as the DKWM device used here, such a connection is not valid anymore and the two criteria does not incorporate the local structure of the nodes in the same way. Here, using $\zeta_k$'s based on classical estimators will in principle lead to better post hoc bounds.  
%The setting is different: MKG intersection of null hypotheses $\beta_i=0$, $i\in I$ and they inspect all the possible (consecutive adjacent contiguous) intervals, possibly overlapping; us we just have individual hypothesis so we do not need to have local tests, but to derive the calculation of the bound, we need the forest structure, which does not cover the overlapping case. However, inspecting all possible intervals is not necessarily desirable to get the bound, because the information is somewhat redundant. Conversely, if the signal is not contain in our reference family, considering all the possible intervals might be better.
%}

%\textcolor{magenta}{
Third, within each node, the local statistics used are not of the same nature: in MKG, the local tests are based on a  multivariate $\chi^2$-type test, see \cite{goeman2004global}. Here, we use an estimator relying on individual $p$-values that exploits the independence structure. This means that the assumptions made in MKG are much weaker, since it is valid under arbitrary dependence. Our approach can in principle also accommodate such a distributional setting, but this needs additional investigations, see the discussion in Section~\ref{extensiondep}. 
%}

%\textcolor{magenta}{
%Assumptions: MKG more general because do not use independence, but see next discussion. 
%}
%\textcolor{magenta}{
Finally, let us mention a setting for which the two methods can be fairly compared. 
First take the MKG method with Bonferroni local tests. As proved in MKG, the resulting FWER controlling procedure (reject the $H_{0,i}$ for which $V(\{i\})=0$) then reduces to the Holm procedure \cite{holm1979simple}. 
By contrast, if we consider $\zeta_k$ equals to the number of accepted null hypotheses by the Holm procedure restricted to $R_k$ (satisfying \eqref{equ:disjoint}), %\todo{Guillermo : $R_k$ disjoint ici} 
our methodology induces another overall FWER controlling procedure: simply the one rejecting all the null hypotheses rejected by the local Holm procedures. Both FWER controlling procedures are valid under arbitrary independence. Interestingly, if the signal is sparse but localized in one of the pre-specified $R_k$, the new procedure will dominate the Holm procedure (this is supported by a numerical experiment and a theoretical study, not reported here for short). %\todo{Guillermo : on risque de nous demander de ressortir tout ce qui a fini dans l'appendix B de ma thèse ici haha, pour info les scripts liés à cette partie sont pas sur le svn mais sur un dossier partagé Dropbox intitulé JER. Faut aller dans JER/Detersim pour trouver tout le bazard que j'y ai laissé} 
This illustrates, once again, that our methodology can improve the state of the art, even in a very elementary framework.  

\subsection{Extension to general local confidence bounds}\label{extensiondep}

In this work, the local bounds $\zeta_k$ have been designed by using the DKW inequality.  This can be straightforwardly extended  to the case where the bound \eqref{equ:zeta} is replaced by $\zeta_k(X)=L_k(\alpha/K)$, for which the function  $L_k(\cdot)$ is a local bound satisfying the condition
\begin{equation}\label{equlocalbound2}
\forall \lambda\in(0,1), \qquad \forall k\in \setK, \qquad\forall P \in \cP,
 \qquad \P_{X\sim P}\Big(|R_k\cap \cH_0(P)| \leq L_k(\lambda) \Big)\leq \lambda\:.
\end{equation}
The properties of the final post hoc bound will obviously depend on the choice of $L_k$.
For instance, the validity of our post hoc bounds relies on \eqref{eq:indep}, which is a strong assumption. The latter is only used to make the DKW inequality valid.
If this assumption is violated, we should use another local bound $L_k$, that satisfies condition \eqref{equlocalbound2} under the specific  dependence setting of the data. For instance, when the dependence is known or satisfies a randomization hypothesis (see \citealp{hemerik2018false}), such a local bound can be easily constructed by applying the $\lambda$-calibration methodology of BNR (e.g., the one corresponding to the balanced template therein).
However, %while this will result in extra-conservativeness,
the computational complexity of the final post hoc bound will substantially increase, which will make such an approach difficult to use in practice. Solving this problem seems challenging and is left for future work.

\section{Proofs}\label{sec:proof}

\subsection{Proof of Lemma~\ref{lem:Vtilde}}\label{proof:firstlemma}
% \begin{proof}
The second and third inequalities in \eqref{equ:Vtildeconservative}
are straightforward from the fact that $\Vtildeq$ is non-increasing in $q$
and $\Vtilde^1=\Vbar$. For the first inequality,
let $S\subset \Nm$ and consider $A \subset \Nm$ such that $ \forall k  \in \setK$,
$|R_k\cap A| \leq \zeta_k $. For any $Q\subset \setK$, we get
\begin{align*}
|S\cap A|&\leq \sum_{k\in Q} \left| S\cap A\cap R_{k}   \right| + \bigg| S\cap A\cap \comp{\bigg( \bigcup_{k\in Q}R_{k}    \bigg)}  \bigg|\\
&\leq \sum_{k\in Q} \zeta_{k}\wedge \left|S\cap R_{k}   \right|  +  \bigg|S\setminus\bigcup_{k\in Q} R_{k}\bigg|,
\end{align*}
which implies the result.
%\end{proof}

\subsection{Proof of Theorem~\ref{thm:Forest}}\label{sec:proof:thm}

%In this section we sometimes use the identification and notation coming from Lemma~\ref{lm:partition}.
In this proof, we fix $S\subset\N{m}$. Also, we let
\begin{equation}
\mathcal{A}(\Rfam)=  \left\{ A \subset \Nm : \forall k  \in \setK ,\: |R_k\cap A| \leq \zeta_k\right\},
\label{eq:mathcalA}
\end{equation}
so that $V^*_{\Rfam}(S)=\max_{A\in \mathcal{A}(\Rfam)} |S\cap A|$.
%\begin{proof}[Proof of Theorem \ref{thm:Forest}]
Also note that~\eqref{eq:Vtildeq}--\eqref{eq:VtildeK} can be rewritten as
\begin{equation}
\Vtilde(S)=\min_{\setK' \subset\setK}   \left( \sum_{k\in\setK'} \zeta_{k}\wedge|S\cap R_{k}| +\left|S\setminus\bigcup_{ k\in\setK' } R_{k}\right|  \right).\label{eq:reformuler}
\end{equation}

\subsubsection{Proof of \eqref{itemi}}

First, by Lemma~\ref{lm:completed}, it is sufficient to prove \eqref{itemi} for $\Rfam^\oplus$. Hence, we can focus without generality on the case where \eqref{eq:atom} holds.
%Let $\Rfam^\oplus$ be the completed structure of $\Rfam$, as in Definition~\ref{def:complet}. Since $\Rfam^\oplus$ satisfies~\eqref{eq:atom}, by previous case, $\wt{V}_{\Rfam^\oplus}(S)=V^*_{\Rfam^\oplus}(S)$. Then by Lemma~\ref{lm:completed} we deduce immediately that $\Vtilde(S)=V^*_{\Rfam}(S)$.
%
%
%To prove~\eqref{itemi}, we first focus on the special case where~\eqref{eq:atom} holds.
%\paragraph{Case where~\eqref{eq:atom} holds}
Recall that this means that $(i,i)\in\setK$ for all $1\leq i \leq N$. %To begin, let us prove that:
%\begin{equation}
%\Vtilde(S)=\min_{  \substack{ \overline{\setK}\subset{\setK} \\  \text{the } R_{k}, k\in \overline{\setK}, \\ \text{ form a partition of } \N{m}  }    }  \sum_{k\in\overline{\setK}} \zeta_{k}\wedge|S\cap R_{k}| .%+\left|R\setminus I_{1,\Lambda}\right|.
%\label{eq:minpartition}
%\end{equation}
%that is, the minimum in~\eqref{eq:reformuler} is always achieved on a partition of $\N{m}$. %$I_{1,\Lambda}$.
Now, to prove that $\Vtilde(S)=V^*_{\Rfam}(S)$, it suffices to build $A\subset S$ such that $A\in\mathcal{A}\left(\Rfam\right)$ and $|A|= \Vtilde(S)$. %= \sum_{(i,j)\in\overline{\setK}} \zeta_{i,j}\wedge|R\cap R_{i,j}| +\left|R\setminus I_{1,\Lambda}\right|  .$$
The key point is that for any $h$, $A$ is the disjoint union of the $A\cap R_{k}$, $k\in\setK^h$, because the $R_{k}$, $k\in\setK^h$, form a partition of $\Nm$ (by Lemma~\ref{lm:part:atom}). %, and of $A\setminus I_{1,\Lambda}$. First, let $A\setminus I_{1,\Lambda}=R\setminus I_{1,\Lambda}$.
Let %also
$H=\max_{ k\in\setK } \h(k)$
be the greater depth of the Forest structure, we will construct $A$ %\cap I_{1,\Lambda}$
with a decreasing recursion over $h\in\{1,\dotsc,H\}$. To this end, we need some additional notation: first, for any $k\in\setK$, let $\setK_{k}=\{ \tk\in\setK: R_{\tk}\subset R_{k} \}$ be the set of indexes of elements that are subsets of $R_{k}$. Then, for any $h$, let $\setK^{\geq h}=\bigcup_{ h\leq h'\leq H  }\setK^{h'}$. Note that $\setK^{\geq 1}=\setK$. Finally let
$$\mathfrak{P}^h=\{ \mathcal{P}\subset \setK^{\geq h} :\text{ the } R_{k},\, k\in\mathcal{P},\text{ form a partition of } \Nm  \}  ,$$
%and note that for all $ h\leq h'\leq H$, $\setK^{h'}\in \mathfrak{P}^h$.
and note that the result of Lemma~\ref{lm:min:part:atom} (that is, equation~\eqref{eq:minpartition}) can be rewritten in
\begin{equation}
\Vtilde(S)=\min_{ \mathcal{P} \in  \mathfrak{P}^1}  \sum_{k\in\mathcal{P}} \zeta_{k}\wedge|S\cap R_{k}| .%+\left|R\setminus I_{1,\Lambda}\right|.
\label{eq:FINALFORM}
\end{equation}

The decreasing recursion starts like this: noting that $\setK^{H}$ is the set of all the $(i,i)$'s, $1\leq i\leq N$, we define $A^{H}$ by choosing (arbitrarily) $\zeta_{i,i}\wedge|S\cap \Rf{i}{i}|$ distinct elements of $S\cap \Rf{i}{i}$ for each $1\leq i\leq N$. Note that we have both
$$\forall k\in\setK^{\geq H}   ,\;\; |A^H\cap R_{k} | \leq \zeta_{k}       ,$$
and
\begin{equation*}
|A^H|=\sum_{k\in\setK^H}  \zeta_{k}\wedge|S\cap R_{k}|
=  \min_{\mathcal{P}\in \mathfrak{P}^{H}}   \sum_{k\in\mathcal{P}}\zeta_{k}\wedge |S\cap R_{k}|  ,
\end{equation*}
since $\mathfrak{P}^{H}=\left\{  \setK^H\right\}$.

Now let $h$ be given and assume we have constructed an $A^{h+1}\subset S$ %\cap I_{1,\Lambda}$
such that both
$$\forall k\in\setK^{\geq h+1}   ,\;\; |A^{h+1}\cap R_{k} | \leq \zeta_{k}       ,$$
and
\begin{align}
|A^{h+1}|&=\min_{\mathcal{P}\in \mathfrak{P}^{h+1}} \sum_{k\in\mathcal{P}}\zeta_{k}\wedge |S\cap R_{k}|\notag \\
&= \sum_{k\in\mathcal{P}^{h+1}}\zeta_{k}\wedge |S\cap R_{k}| , \label{eq:minP}
\end{align}
for a given $\mathcal{P}^{h+1}\in\mathfrak{P}^{h+1}$. Using that $|A^{h+1}|=\sum_{k\in\mathcal{P}^{h+1}}|A^{h+1} \cap R_{k}|$ and that $|A^{h+1}\cap R_{k} | \leq \zeta_{k} \wedge |S\cap R_{k}|  $ for all $k\in\mathcal{P}^{h+1}$, we deduce that $|A^{h+1}\cap R_{k} | = \zeta_{k} \wedge |S\cap R_{k}|  $ for all $k\in\mathcal{P}^{h+1}$.

Now we want to construct $A^h$ by defining all the $A^h\cap R_{k}$, $k\in\setK^h$. By writing that $R_{k}=\bigcup_{\tk \in \mathcal{P}^{h+1}\cap \setK_{k}} R_{\tk}$, the union being disjoint, we have first that,  for all $k\in\setK^h$,
\begin{align*}
 |A^{h+1}\cap R_{k} | &= \sum_{\tk\in \mathcal{P}^{h+1}\cap \setK_{k}  }  |A^{h+1}\cap R_{\tk} | \\
 &= \sum_{\tk\in \mathcal{P}^{h+1}\cap \setK_{k}  }  \zeta_{\tk} \wedge |S\cap R_{\tk}|.
\end{align*}
Second, we have that:
\begin{align}
\min_{\mathcal{P}\in \mathfrak{P}^{h}} \sum_{k\in\mathcal{P}}\zeta_{k}\wedge |S\cap R_{k}|&=\sum_{k\in\setK^h}\min_{\mathcal{P}\in\mathfrak{P}^h}\left( \sum_{\tk\in\mathcal{P}\cap\setK_{k}}\zeta_{\tk} \wedge |S\cap R_{\tk}|\right) \label{eq:chaud1}\\
&=\sum_{k\in\setK^h} \min\Bigg( \zeta_{k}\wedge |S\cap R_{k}| , % \\
%&\qquad\qquad
 \min_{\mathcal{P}\in\mathfrak{P}^{h+1}}\left( \sum_{\tk\in\mathcal{P}\cap\setK_{k}}\zeta_{\tk} \wedge |S\cap R_{\tk}|\right)    \Bigg) \label{eq:chaud2} \\
&=\sum_{k\in\setK^h} \min\Bigg( \zeta_{k}\wedge |S\cap R_{k}| , % \\
%&\qquad\qquad
\sum_{   \tk\in \mathcal{P}^{h+1}\cap \setK_{k}  }  \zeta_{\tk} \wedge |S\cap R_{\tk}|  \Bigg)  \label{eq:chaud3}\\
&=\sum_{k\in\setK^h} \min\left( \zeta_{k}\wedge |S\cap R_{k}| ,  |A^{h+1}\cap R_{k} |   \right).\notag
\end{align}
In the above, \eqref{eq:chaud1} holds by additivity and because for every $\mathcal{P}\in \mathfrak{P}^{h}$, any element of $\mathcal{P}$ is also an element of one of the $\mathcal{P}\cap\setK_{k}$, $k\in\setK^h$. Moreover, for every $\mathcal{P}\in \mathfrak{P}^{h}$ and $k\in\setK^h$, $\mathcal{P}\cap\setK_{k}$ is either $\{k\}$, either a set of elements of depth $\geq h+1$, hence~\eqref{eq:chaud2}. Finally, \eqref{eq:chaud3} holds because all the minima in~\eqref{eq:chaud2} are realized in $\mathcal{P}^{h+1}$, otherwise the minimality of $\mathcal{P}^{h+1}$ in~\eqref{eq:minP} would be contradicted.

We finally construct all the $A^h\cap R_{k}$, $k\in\setK^h$, in the following way: if $ |A^{h+1}\cap R_{k} |  \leq  \zeta_{k}\wedge |S\cap R_{k}| $, we let $A^h\cap R_{k}=A^{h+1}\cap R_{k} $, else we let $A^h\cap R_{k}$ be a subset of $\zeta_{k}\wedge |S\cap R_{k}| $ distinct elements of $A^{h+1}\cap R_{k} $.
%we pick a set $B^h$ of $ |A^{h+1}\cap R_{i,j} |  -   \zeta_{i,j}\wedge |R\cap R_{i,j}| $ distinct elements of $A^{h+1}\cap R_{i,j} $ and we let $A^h\cap R_{i,j}=(A^{h+1}\cap R_{i,j}) \setminus B^h$.
 This both ensures that
$$|A^{h}|=\min_{\mathcal{P}\in \mathfrak{P}^{h}} \sum_{k\in\mathcal{P}}\zeta_{k}\wedge |S\cap R_{k}|  , $$
and that
$$\forall k\in\setK^{\geq h}   ,\;\; |A^h\cap R_{k} | \leq \zeta_{k}       ,$$
because $\setK^{\geq h} =\setK^{ h} \cup \setK^{\geq h+1}  $ and $A^h\subset A^{h+1}$, which ends the recursion.

Now letting $A=A^1$, we have found an $A\subset S$ such that $A\in\mathcal{A}\left(\Rfam\right)$ and $|A|= \Vtilde(S)$ (by~\eqref{eq:FINALFORM}).

\subsubsection{Proof of \eqref{itemii}}

%
%\paragraph{General case for~\eqref{itemi} and ~\eqref{itemii}}
%Let $\Rfam^\oplus$ be the completed structure of $\Rfam$, as in Definition~\ref{def:complet}. Since $\Rfam^\oplus$ satisfies~\eqref{eq:atom}, by previous case, $\wt{V}_{\Rfam^\oplus}(S)=V^*_{\Rfam^\oplus}(S)$. Then by Lemma~\ref{lm:completed} we deduce immediately that $\Vtilde(S)=V^*_{\Rfam}(S)$.
%We only have left to prove that this result can be improved in $\Vtilde^{d}(S)=V^*_{\Rfam}(S)$.

By~\eqref{itemi} and Lemmas~\ref{lm:min:part:atom} and~\ref{lm:completed}, we have %that:
$$V^*_{\Rfam}(S)=V^*_{\Rfam^\oplus}(S) =\widetilde V_{\Rfam^\oplus}(S)=  \sum_{k\in\overline{\setK}} \zeta_{k}\wedge|S\cap R_{k}|, $$
for some $\overline{\setK} \subset \setK^\oplus$ such that the $R_{k}$, $k\in\overline{\setK}$, form a partition of $\N{m}$. Hence,
\begin{align*}
V^*_{\Rfam}(S)&= \sum_{k\in\setK\cap\overline{\setK}} \zeta_{k}\wedge|S\cap R_{k}|+\sum_{k\in\overline{\setK}\setminus\setK} \zeta_{k}\wedge|S\cap R_{k}|\\
&=\sum_{k\in\setK\cap\overline{\setK}} \zeta_{k}\wedge|S\cap R_{k}|+\sum_{k\in\overline{\setK}\setminus\setK}|S\cap R_{k}|\\
&=\sum_{k\in\setK\cap\overline{\setK}} \zeta_{k}\wedge|S\cap R_{k}|+ \left|S\setminus \bigcup_{k\in\setK\cap\overline{\setK}} R_k  \right|,
%\\
%&\geq  \Vtilde^d(S),
\end{align*}
because the $R_k$, $k\in\overline{\setK}\setminus\setK $ are all disjoint. Now,  $| \setK\cap\overline{\setK} |\leq d$ by definition of $d$, which means that the latter display is larger than or equal to $\Vtilde^d(S)$, which proves the result.
%However, at most $d$ elements of $\setK$ can appear in $\overline{\setK}$. Furthermore, recall that for $k\in\overline{\setK} \setminus \setK$, $\zeta_{k}=|R_{k}|$. Hence $V^*_{\Rfam}(S)\geq \Vtilde^d(S)$ which ends the proof.
%\end{proof}

\subsection{Proof of Corollary~\ref{cor:disjoint}}\label{sec:proof:cor}

\paragraph{Proof of \emph{(i)} }This is a direct byproduct of Theorem~\ref{thm:Forest}, because if \eqref{equ:nested} holds, then $d=1$ and thus  $V^*_{\Rfam}= \Vtilde^d=\Vtilde^1=\Vbar$.%, which recovers Proposition 2.5 of BNR.
%Another interesting case is the structure \eqref{equ:disjoint}, for which $d=K$ and $V^*_{\Rfam}= \Vtilde$. In addition, it turns out that $\Vtilde$ can have a simpler expression in that case.

%\begin{proof}
\paragraph{Proof of \emph{(ii)} }By Theorem~\ref{thm:Forest}, $V^*_{\Rfam}=\Vtilde=\Vtilde^K$ defined by~\eqref{eq:Vtildeq}--\eqref{eq:VtildeK}. Now, for any $S\subset \Nm$, for any $Q\subset \setK$ with $|Q|\leq K-1$, by denoting $k_0$ any element not in $Q$, we have
$$
R_{k_0}\cap \left(\bigcup_{k\in Q} R_{k}\right)=\varnothing,
$$
by~\eqref{equ:disjoint}, and
\begin{align*}
 \sum_{k\in Q} \zeta_{k}\wedge|S\cap R_{k}| &+\left|S\setminus\bigcup_{k\in Q} R_{k}\right| =  |S\cap R_{k_0}| +   \sum_{k\in Q} \zeta_{k}\wedge|S\cap R_{k}| +\left|S\setminus\left(\bigcup_{k\in Q} R_{k} \cup R_{k_0}\right)\right| \\
 &\geq  \zeta_{k_0}\wedge |S\cap R_{k_0}| +   \sum_{k\in Q} \zeta_{k}\wedge|S\cap R_{k}| +\left|S\setminus\left(\bigcup_{k\in Q} R_{k} \cup R_{k_0}\right)\right| \\
 &=\sum_{k\in Q\cup\{k_0\}} \zeta_{k}\wedge|S\cap R_{k}| +\left|S\setminus\bigcup_{k\in Q\cup\{k_0\}} R_{k}\right|.
\end{align*}
Hence, the minimum in  \eqref{eq:Vtildeq} within the $\Vtilde^K$  expression is attained for $Q=\mathcal{K}$ and the result is proved.
%\end{proof}

\subsection{Proof of Proposition~\ref{prop:calibration}}
\label{sub:proof:calibration}
%\begin{proof}
Let us show that for all $\lambda\in (0, 1/2)$, for any $S \subset \Nm$ with cardinal $s=|S|$, we have with probability at least $1-\lambda$ that
\begin{equation}\label{equlocalbound}
|S \cap \cH_0|\leq \min_{t\in[0,1)} \left( \frac{\sqrt{\log(1/\lambda)/2}}{2(1-t)} +
\left\{\frac{\log(1/\lambda)/2}{4(1-t)^2} + \frac{N_t(S)}{1-t}\right\}^{1/2} \right)^2,
\end{equation}
for $N_t(S)=\sum_{i\in S} \mathbf{1}{\{p_i(X) > t\}}$.
Let $v=|S \cap \cH_0|$ (assumed to be positive without loss of generality) and $U_1,\dots, U_{v}$ being $v$ i.i.d. uniform random variables. %Since $\Vl(X,S,\lambda)$ is nondecreasing in each $p$-value and by independence, it is sufficient to show the result for $\{p_i(X), \:i \in \cH_0\cap S\}$ replaced by $\{U_1,\dots, U_{v}\}$.
The DKW inequality (with the optimal constant of \citealp{massart1990tight}) ensures that, with probability at least $1-\lambda$, for all $t\in[0,1]$, we have
\begin{align*}
v^{-1} \sum_{i=1}^v \mathbf{1}{\{U_i > t\}} - (1-t) \geq -\sqrt{\log (1/\lambda)/(2v)}.
\end{align*}
Now using Lemma~\ref{lem:trivial} with $x=v^{1/2}$, $a=1-t$, $b= \sqrt{\log(1/\lambda)/2}$ and $c=\sum_{i=1}^v \mathbf{1}{\{U_i > t\}}$ provides~\eqref{equlocalbound} but with $N_t(S)$ replaced by $c$. Since $p_i(X)$ stochastically dominates $U_i$, by independence $N_t(S)$ also dominates $c$, which yields
$$\forall k\in\setK,\: \P\left(\left|R_k \cap \mathcal{H}_0 \right| > \zeta_k(X)  \right) \leq \frac{\alpha}{K} ,  $$
by choosing $\lambda=\frac{\alpha}{K}$. Then~\eqref{eq:jercontrol} follows by a classical union bound argument. %from writing that, for all $P\in\cP$,
%\begin{align*}
% \P_{X\sim P}\left(\exists k  \in \setK ,\: |R_k\cap \cH_0(P)| > \zeta_k \right)&\leq \sum_{k\in\setK} \P_{X\sim P}\left( |R_k\cap \cH_0(P)| > \zeta_k \right)\\
% &\leq \sum_{k\in\setK}\frac{\alpha}{K}=\alpha.
%\end{align*}

%\end{proof}

\subsection{Proof of Proposition~\ref{prop:comparison} }
\label{sub:proof:comparison}
%\begin{proof}
We have for any $t\in[0,1)$,
\begin{align*}
\frac{\E(\VBonf(R_1))}{|R_1|} &= s^{-1} \sum_{i\in R_1\cap \cH_0} \P(p_i(X)>\alpha/m) + s^{-1}\sum_{i\in R_1\cap \cH_1} \P(p_i(X)>\alpha/m)
\\
&=  (1-r)(1-\alpha/m) + r \left(1-\ol{\Phi}(\ol{\Phi}^{-1}(\alpha/m)-\mu)\right),
\end{align*}
which gives \eqref{boundVBonf}.
Next,
\begin{align*}
\VSimes(R_1)&= \min_{1\leq k \leq s}\left\{\sum_{i\in R_1}\ind{p_i(X)>\alpha k/m} + k-1\right\}\\
&\geq \sum_{i\in R_1}\ind{p_i(X)>\alpha s /m},
\end{align*}
which gives \eqref{boundSimes}.
Finally, for all $t\in[0,1)$, by denoting $N=\sum_{i\in R_1} \mathbf{1}{\{p_i(X) > t\}}$, we have
\begin{align*}
\E(\VDKW(R_1)) & \leq  \E  \left[\left( \frac{C}{2(1-t)} +
\left\{\frac{C^2}{4(1-t)^2} + \frac{N}{1-t}\right\}^{1/2} \right)^2\right]\\
&\leq  \E  \left[\left( \frac{C}{1-t} +   \left\{\frac{N}{1-t}\right\}^{1/2} \right)^2\right]\\
&\leq  \frac{C^2}{(1-t)^2} +\frac{\E N}{1-t}+\frac{2C}{(1-t)^{3/2}} \:\E  \left[N^{1/2}\right]\\
&\leq  \frac{C^2}{(1-t)^2} +\frac{\E N}{1-t}+\frac{2C}{1-t} \:\left(\frac{\E  N}{1-t}\right)^{1/2},
\end{align*}
where we used $\sqrt{x+y}\leq \sqrt{x}+\sqrt{y}$ for all $x,y\geq 0$ and that $x\mapsto x^{1/2}$ is concave.
Since
$$
\Esp{  N / |R_1|} = (1-r) (1-t) + r \left(1-\ol{\Phi}(\ol{\Phi}^{-1}(t)-\mu)\right),
$$
and $\Esp{  N} \leq s (1-t)$, this provides
\begin{align*}
\frac{\E(\VDKW(R_1))}{|R_1|}
&\leq \min_t \left\{s^{-1}\frac{C^2}{(1-t)^2} +1-r + r \frac{\ol{\Phi}(\mu-\ol{\Phi}^{-1}(t))}{1-t}+s^{-1/2}\frac{2C}{1-t}\right\}.
\end{align*}
Taking $t=1/2$ %and $|R_1|\geq C^2/(1-t)^2$\todo{ou plutÃÅœt $\frac{2C}{s^{{1}/{2}}}\leq 1$}, this
gives \eqref{boundVDKW}.

%\end{proof}

\section*{Acknowledgements}

This work has been supported by ANR-16-CE40-0019 (SansSouci) and ANR-17-CE40-0001 (BASICS).

\bibliographystyle{apalike}
\bibliography{biblio}

\appendix

\section{Auxiliary lemmas}\label{app:sec:aux}

The following lemma holds.
\begin{lemma}\label{lem:trivial}
For all $a > 0$ and $b,c,x\geq0$, the two following assertions are equivalent
\begin{itemize}
\item[(i)] $c - a x^2 \geq - bx$;
\item[(ii)] $x\leq \frac{b}{2a} + \sqrt{\frac{b^2}{4a^2} + \frac{c}{a}}$.
\end{itemize}
In particular, we have for all $d\geq 0$,
\begin{equation}\label{equ:majdwedge}
d\wedge \left(\frac{b}{2a} + \sqrt{\frac{b^2}{4a^2} + \frac{c}{a}}\right)^2 \leq d\wedge \left(\frac{c+d^{1/2} b}{a}\right).
\end{equation}
\end{lemma}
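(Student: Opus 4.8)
\textbf{Proof plan for Lemma~\ref{lem:trivial}.}

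The statement has two parts: the equivalence $(i)\Leftrightarrow(ii)$, and the consequence \eqref{equ:majdwedge}. For the equivalence, the plan is to treat $c - ax^2 \geq -bx$ as a quadratic inequality in $x$. Rewriting, $(i)$ says $ax^2 - bx - c \leq 0$. Since $a>0$, the parabola $x\mapsto ax^2-bx-c$ opens upward, so the solution set in $x$ is exactly the interval between its two real roots (the discriminant $b^2+4ac$ is $\geq 0$ because $b,c\geq 0$ and $a>0$, so the roots are always real). The larger root is $\frac{b+\sqrt{b^2+4ac}}{2a} = \frac{b}{2a}+\sqrt{\frac{b^2}{4a^2}+\frac{c}{a}}$, which is exactly the right-hand side of $(ii)$. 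The smaller root is $\frac{b-\sqrt{b^2+4ac}}{2a}\leq 0$. Since we restrict to $x\geq 0$, the constraint ``$x$ is at least the smaller root'' is automatic, so $(i)$ reduces for $x\geq0$ to ``$x$ is at most the larger root'', i.e.\ $(ii)$. I would write this out cleanly, being slightly careful about the corner case $a$ fixed positive so no division-by-zero issue arises.

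For the inequality \eqref{equ:majdwedge}: set $x^\star = \frac{b}{2a}+\sqrt{\frac{b^2}{4a^2}+\frac{c}{a}}$, so the left side is $d\wedge (x^\star)^2$ and the right side is $d\wedge \frac{c+d^{1/2}b}{a}$. If $(x^\star)^2 \leq d$, then I want to show $(x^\star)^2 \leq \frac{c+d^{1/2}b}{a}$. By the equivalence just proved (applied with $x = x^\star$, which satisfies $(ii)$ with equality), we have $c - a(x^\star)^2 \geq -b x^\star$, i.e.\ $a(x^\star)^2 \leq c + bx^\star$. Now using $x^\star = \sqrt{(x^\star)^2} \leq \sqrt{d} = d^{1/2}$ (valid precisely because we are in the case $(x^\star)^2\leq d$, and $x^\star\geq 0$), we get $a(x^\star)^2 \leq c + b d^{1/2}$, hence $(x^\star)^2 \leq \frac{c+d^{1/2}b}{a}$, so $d\wedge(x^\star)^2 = (x^\star)^2 \leq d\wedge\frac{c+d^{1/2}b}{a}$. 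In the complementary case $(x^\star)^2 > d$, the left side equals $d$, and $d\wedge\frac{c+d^{1/2}b}{a}\leq d$ trivially would be the wrong direction, so here I need $d \leq \frac{c+d^{1/2}b}{a}$: but $(x^\star)^2 > d$ together with $a(x^\star)^2\leq c+bx^\star$ does not immediately give this, so instead I should argue directly. Actually the clean way: the left side $d\wedge(x^\star)^2$ is always $\leq (x^\star)^2$, and we showed $a(x^\star)^2 \leq c + bx^\star$; separately, $d\wedge(x^\star)^2 \leq d$ gives $\sqrt{d\wedge(x^\star)^2}\leq \sqrt d = d^{1/2}$ and also $\leq x^\star$. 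Hmm — the cleanest route is: let $y = d\wedge(x^\star)^2 \geq 0$. Then $\sqrt{y}\leq x^\star$ so (monotonicity) $a y \leq a (x^\star)^2 \le c + b x^\star$... that still has $x^\star$ not $\sqrt y$.

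Let me restructure the second part to avoid that snag. Set $y = d\wedge (x^\star)^2$. I claim $ay \leq c + b\sqrt{y}$ and $y\leq d$; the latter is by definition, and combined with $\sqrt{y}\leq \sqrt d = d^{1/2}$ this gives $ay \leq c + b d^{1/2}$, i.e.\ $y \leq \frac{c+d^{1/2}b}{a}$, which together with $y\leq d$ yields $y \leq d\wedge\frac{c+d^{1/2}b}{a}$ as desired. So it remains to prove $ay \leq c + b\sqrt{y}$, equivalently (by the equivalence $(i)\Leftrightarrow(ii)$ applied with $x=\sqrt y \geq 0$) that $\sqrt y \leq x^\star$. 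But $\sqrt y = \sqrt{d\wedge(x^\star)^2} \leq \sqrt{(x^\star)^2} = x^\star$, which holds. This closes the argument in all cases at once. The main (minor) obstacle is simply organizing the case analysis so that the substitution $x\mapsto\sqrt{y}$ into the equivalence is applied at the right point; there is no real difficulty, as everything reduces to elementary algebra of quadratics and monotonicity of $\sqrt{\cdot}$.
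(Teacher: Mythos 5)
Your final argument is correct, and both halves match the paper in substance, with one organizational difference worth noting. For the equivalence, the paper simply declares it obvious; your quadratic-roots argument (upward parabola, larger root equals the right-hand side of $(ii)$, smaller root nonpositive so irrelevant for $x\geq 0$) is exactly the right way to make it explicit. For \eqref{equ:majdwedge}, the paper splits into two cases according to whether $\bigl(\frac{b}{2a}+\sqrt{\frac{b^2}{4a^2}+\frac{c}{a}}\bigr)^2$ exceeds $d$: in the first case it applies $(ii)\Rightarrow(i)$ with $x=d^{1/2}$, and in the second it expands the square algebraically and bounds the cross term by $d^{1/2}$. Your restructured version avoids the case split entirely by applying the equivalence once with $x=\sqrt{y}$ where $y=d\wedge(x^\star)^2$, using $\sqrt{y}\leq x^\star$ to get $(ii)$, hence $ay\leq c+b\sqrt{y}\leq c+bd^{1/2}$, and then $y\leq d$ to conclude. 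This is a genuine (if minor) streamlining: one substitution replaces the paper's two cases and its explicit expansion of the square. The exploratory detour in the middle of your write-up, where you first attempt a case analysis that stalls in the case $(x^\star)^2>d$, is harmless since the final unified argument is complete; just make sure the clean version is what appears in the written proof.
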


\begin{proof}
The equivalence between (i) and (ii) is obvious.
For $d\geq 0$, if we have the inequality $\left(b/(2a) + \sqrt{b^2/(4a^2) + c/a}\right)^2\geq d$, then (ii) is satisfied with $x=d^{1/2}$, which entails
$c - a d\geq - bd^{1/2} $ and gives $d\leq (c +d^{1/2} b)/a$. If, on the contrary, $\left(b/(2a) + \sqrt{b^2/(4a^2) + c/a}\right)^2\leq d$, then
\begin{align*}
 \left(\frac{b}{2a} + \sqrt{\frac{b^2}{4a^2} + \frac{c}{a}}\right)^2&=\frac{b^2}{2a^2}+\frac{c}{a}+\frac{b}{a}\sqrt{b^2/(4a^2) + c/a}\\
 &=\frac{c}{a} +\frac{b}{a} \left(b/(2a) + \sqrt{b^2/(4a^2) + c/a}\right)\leq \frac{c}{a} +\frac{b}{a}d^{1/2}.
\end{align*}
This entails the result.
%\begin{align*}
%&\left(\frac{C}{2(1-t)} +
%\left(\frac{C^2}{4(1-t)^2} + \frac{\sum_{i\in R_k} \mathbf{1}{\{p_i(X) > t\}}}{1-t}\right)^{1/2}\right)^2\\
%%= &\frac{C^2}{2(1-t)^2}+ \frac{\sum_{i\in R_k} \mathbf{1}{\{p_i(X) > t\}}}{1-t} + \frac{C}{1-t} \left(\frac{C^2}{4(1-t)^2} +\frac{\sum_{i\in R_k} \mathbf{1}{\{p_i(X) > t\}}}{1-t}\right)^{1/2}\\
%%\leq &\frac{C^2}{(1-t)^2}+ \frac{\sum_{i\in R_k} \mathbf{1}{\{p_i(X) > t\}}}{1-t} + \frac{C}{1-t} \left(\frac{\sum_{i\in R_k} \mathbf{1}{\{p_i(X) > t\}}}{1-t}\right)^{1/2}\\
%%\leq &\left(\frac{C}{1-t} +
%%\left( \frac{\sum_{i\in R_k} \mathbf{1}{\{p_i(X) > t\}}}{1-t}\right)^{1/2}\right)^2\\
%\end{align*}
%and
%\begin{align*}
%\frac{|R_k|^{1/2} C}{1-t} = \left(\frac{|R_k|}{ 1-t}\right)^{1/2} \frac{C}{(1-t)^{1/2}} \leq
%\end{align*}
\end{proof}

The two following lemmas %give some important technical results
are used in the proof of Theorem~\ref{thm:Forest}, in the case where condition~\eqref{eq:atom} holds.
\begin{lemma}
 For a reference family that has a Forest structure, if~\eqref{eq:atom} holds, then for any $h\geq1$, the $\Rf{i}{j}$, $(i,j)\in\setK^h$, form a partition of  $\N{m}$.
 \label{lm:part:atom}
 \end{lemma}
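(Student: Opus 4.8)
The plan is to prove \textbf{Lemma~\ref{lm:part:atom}} by induction on the depth parameter $h$, using the forest structure together with the all-atoms condition~\eqref{eq:atom}. Recall that $\setK^h$ collects the indices of the reference sets of depth exactly $h$ plus the atoms $(i,i)$ of depth $\leq h$. First I would fix $h\geq 1$ and argue that the sets $\Rf{i}{j}$, $(i,j)\in\setK^h$, are pairwise disjoint: take two distinct indices $k,\tk\in\setK^h$. By~\eqref{eq:Forest} either $R_k\cap R_{\tk}=\varnothing$ (and we are done), or one is contained in the other, say $R_k\subsetneq R_{\tk}$. But strict inclusion forces $\phi(k)>\phi(\tk)$ by the definition~\eqref{eq:depth} of the depth. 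If both $k$ and $\tk$ correspond to depth exactly $h$ this is impossible; if $\tk$ is an atom of depth $\leq h$, then $R_k\subsetneq R_{\tk}$ would make $R_k$ a proper subset of a single atom $P_{\tk}$, contradicting that $R_k$ is a nonempty union of atoms; and if $k$ is an atom of depth $\leq h$ while $\tk$ has depth exactly $h$, then $\phi(k)\leq h=\phi(\tk)<\phi(k)$, again a contradiction. Hence disjointness holds.

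The second and main part is to show the $\Rf{i}{j}$, $(i,j)\in\setK^h$, \emph{cover} $\N{m}$. Since~\eqref{eq:atom} holds, $\N{m}=\bigcup_{n=1}^N P_n$ and each $P_n=R_{(n,n)}$ belongs to the family, so it suffices to show every atom $P_n$ is contained in some $\Rf{i}{j}$ with $(i,j)\in\setK^h$. I would fix $n$ and look at the chain of members of $\{R_k\}_{k\in\setK}$ that contain $P_n$: by~\eqref{eq:Forest} these are totally ordered by inclusion (any two of them contain $P_n$, hence intersect, hence are nested), so they can be listed as $P_n=R_{k_1}\subsetneq R_{k_2}\subsetneq\dots\subsetneq R_{k_\ell}$ with strictly increasing cardinalities, and by~\eqref{eq:depth} with strictly decreasing depths $\phi(k_1)>\phi(k_2)>\dots>\phi(k_\ell)$, where $\phi(k_1)=\phi((n,n))$ is the depth of the atom $P_n$. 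If $\phi((n,n))\leq h$, then $(n,n)\in\setK^h$ and $P_n=\Rf{n}{n}$ already works. Otherwise $\phi((n,n))>h$, and since the depths $\phi(k_j)$ decrease by at least $1$ at each step down to $\phi(k_\ell)$ (a root, of depth $1\leq h$), there is a unique index $j$ with $\phi(k_j)=h$ exactly — this is the ancestor of $P_n$ at depth $h$ — giving $k_j\in\setK^h$ and $P_n\subset R_{k_j}$. Combining disjointness and covering yields the partition claim.

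The step I expect to be the main obstacle is the covering argument, specifically justifying that the chain of ancestors of a given atom hits \emph{every} depth value between $1$ and the atom's own depth without skipping $h$. This requires the observation that if $R_k\subsetneq R_{\tk}$ with no intermediate member of the family strictly between them (i.e. $\tk$ is the parent of $k$ in the forest of Figure~\ref{fig:graph}), then $\phi(\tk)=\phi(k)-1$: indeed the members strictly containing $R_k$ are exactly $R_{\tk}$ together with the members strictly containing $R_{\tk}$, by the total-order property, so the two counts in~\eqref{eq:depth} differ by exactly one. I would prove this small fact first as the key combinatorial lemma, after which the descent from $\phi((n,n))$ down through consecutive integers to a root is immediate, and in particular passes through the value $h$ whenever $\phi((n,n))\geq h\geq 1$.
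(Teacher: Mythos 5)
Your proof is correct and follows essentially the same route as the paper's: pairwise disjointness via the same case analysis (depth comparison when both indices have depth exactly $h$, and the union-of-atoms argument when the larger set is an atom), and covering via the nested chain of members containing a given atom, whose depths run through every value from $\phi((n,n))$ down to $1$. The only difference is cosmetic: you spell out the key fact that a parent's depth is exactly one less than its child's, which the paper uses implicitly when asserting that a nested family of at least $h$ sets containing $P_n$ must contain one of depth exactly $h$.
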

 \begin{proof}
 Let $h\geq1$. Let $(i,j), (\ti,\tj)\in \setK^h$ such that $(i,j)\neq (\ti,\tj)$. By~\eqref{eq:Forest}, either $\Rf{i}{j}$ and $\Rf{\ti}{\tj}$ are disjoint, or, without loss of generality, $\Rf{i}{j}\subset \Rf{\ti}{\tj}$. If $\h(\ti,\tj)=h$ then the latter is not possible because that would mean that $\h(i,j)\geq h+1$. If $\ti=\tj$, then $\Rf{i}{j}\subset \Rf{\ti}{\tj}$ would imply that $P_i\cup\dotsb\cup P_j \subset P_\ti$ which in turn implies $i=j=\ti=\tj$ which is also impossible. So $\Rf{i}{j}$ and $\Rf{\ti}{\tj}$ are disjoint.

 Now take any $e\in\Nm$. $(P_n)_{1\leq n \leq N}$ is a partition so there exists some $n\leq N$ such that $e\in P_n$. If $\h(n,n)\leq h$ then $(n,n)\in\setK^h$. If $\h(n,n) > h$, then $\{k \in\setK : P_n\subsetneq R_k\}$ has at least $h$ elements. Furthermore those elements are nested by~\eqref{eq:Forest}, so there exists $k\in \setK$ such that $P_n\subsetneq R_k$ and $\phi(k)=h$, hence $e\in R_k$ with $k\in\setK^h$. Finally in both cases $e\in \bigcup_{k\in\setK^h}R_k$ so $\Nm=\bigcup_{k\in\setK^h}R_k$, which concludes the proof.
 \end{proof}
\begin{lemma}
 For a reference family that satisfies \eqref{eq:Forest} and \eqref{eq:atom}, we have
 \begin{equation}
\Vtilde(S)=\min_{  \substack{ \overline{\setK}\subset{\setK} \\  \text{the } R_{k}, k\in \overline{\setK}, \\ \text{ form a partition of } \N{m}  }    }  \sum_{k\in\overline{\setK}} \zeta_{k}\wedge|S\cap R_{k}| .%+\left|R\setminus I_{1,\Lambda}\right|.
\label{eq:minpartition}
\end{equation}
that is, the minimum in~\eqref{eq:reformuler} is always achieved on a partition of $\N{m}$.
 \label{lm:min:part:atom}
 \end{lemma}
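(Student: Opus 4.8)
The plan is to start from the reformulation $\Vtilde(S) = \min_{\setK'\subset\setK}\big(\sum_{k\in\setK'}\zeta_k\wedge|S\cap R_k| + |S\setminus\bigcup_{k\in\setK'}R_k|\big)$ established in \eqref{eq:reformuler}. One inequality in \eqref{eq:minpartition} is immediate: any $\overline{\setK}$ whose reference sets partition $\N{m}$ is an admissible choice of $\setK'$ above, and for it $\bigcup_{k\in\overline{\setK}}R_k=\N{m}$, so the leftover term $|S\setminus\bigcup_{k\in\overline{\setK}}R_k|$ vanishes; hence $\Vtilde(S)\leq \sum_{k\in\overline{\setK}}\zeta_k\wedge|S\cap R_k|$ for every such $\overline{\setK}$. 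The remaining work is the reverse inequality: from a minimizer $\setK'$ of \eqref{eq:reformuler} we must produce a partition-inducing $\overline{\setK}$ with $\sum_{k\in\overline{\setK}}\zeta_k\wedge|S\cap R_k|\leq\Vtilde(S)$.

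First I would enlarge $\setK'$ so that its reference sets cover $\N{m}$. The set $T:=\N{m}\setminus\bigcup_{k\in\setK'}R_k$ is a union of atoms, since each $R_k$ is. Under \eqref{eq:atom} every such atom $P_n$ equals $R_{(n,n)}$ with $\zeta_{n,n}=|P_n|$, so $\zeta_{n,n}\wedge|S\cap P_n|=|S\cap P_n|$ and $|S\cap T|=\sum_{n:\,P_n\subset T}\zeta_{n,n}\wedge|S\cap P_n|$. Adding all these indices $(n,n)$ to $\setK'$ therefore leaves the objective in \eqref{eq:reformuler} unchanged: the leftover term drops to $0$ while exactly the same amount reappears in the sum. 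So we may assume $\bigcup_{k\in\setK'}R_k=\N{m}$.

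Next I would remove overlaps one at a time. As long as there exist $k\neq\tk$ in $\setK'$ with $R_k\cap R_{\tk}\neq\varnothing$, property \eqref{eq:Forest} (together with $R_k\neq R_{\tk}$) forces $R_k\subsetneq R_{\tk}$ or $R_{\tk}\subsetneq R_k$; in the first case remove $k$. Since $R_k\subset R_{\tk}$ and $\tk$ stays in $\setK'$, the union $\bigcup_{k'\in\setK'}R_{k'}$ remains equal to $\N{m}$, while the objective decreases by $\zeta_k\wedge|S\cap R_k|\geq0$; so it does not increase. As $|\setK'|$ strictly decreases at each step the procedure terminates, and the final $\overline{\setK}$ has pairwise disjoint reference sets with union $\N{m}$, i.e., they form a partition of $\N{m}$. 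Its objective is $\sum_{k\in\overline{\setK}}\zeta_k\wedge|S\cap R_k|\leq\Vtilde(S)$, which finishes the proof.

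I do not expect a real obstacle. The only step requiring care is the first one: one must check that substituting the atoms contained in $T$ for the leftover term leaves the value of \eqref{eq:reformuler} exactly unchanged, which is precisely where \eqref{eq:atom} (hence $\zeta_{n,n}=|P_n|$) is used in an essential way. The overlap-removal step is pure bookkeeping, justified entirely by the forest property \eqref{eq:Forest} and the finiteness of $\setK$.
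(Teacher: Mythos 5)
Your argument is correct and follows essentially the same route as the paper's: both take an arbitrary $\setK'$, keep only its maximal (hence pairwise disjoint, by \eqref{eq:Forest}) elements, fill the uncovered part of $\N{m}$ with atoms made available by \eqref{eq:atom}, and check that the objective in \eqref{eq:reformuler} does not increase; the paper does this in one shot (its $\setK'_1\cup\setK'_2$) while you remove overlaps iteratively, an immaterial difference. One justification in your write-up is wrong, though harmlessly so: \eqref{eq:atom} only guarantees that each atom $P_n$ equals some $R_k$ with $k\in\setK$; it does \emph{not} guarantee $\zeta_{n,n}=|P_n|$ (that equality is imposed in Definition~\ref{def:complet} only for atoms \emph{added} during completion, not for atoms already present in $\setK$). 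Consequently your claim that adding the atoms of $T$ leaves the objective ``exactly unchanged'' can fail, and this is not where \eqref{eq:atom} is used ``in an essential way''; what is true, and all you need, is that the objective does not increase, since $\zeta_{n,n}\wedge|S\cap P_n|\leq|S\cap P_n|$ --- which is precisely the inequality the paper invokes at the corresponding step.
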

 \begin{proof}
 Let any $\setK'\subset{\setK}$. Because property~\eqref{eq:Forest} is true, there exists $\setK'_1\subset \setK'$ such that the $R_{k}$, $k\in\setK'_1$, are pairwise disjoint, and
$$\forall k\in \setK', \exists  \tk\in \setK'_1, R_{k}\subset R_{\tk}.$$
Note that this implies that $\bigcup_{k\in \setK'_1}R_{k}=\bigcup_{k\in \setK'}R_{k}$. Likewise, because $\setK$ includes all the $(i,i), 1\leq i\leq N$, there exists $\setK'_2\subset \setK$ such that the $R_{k}$, $k\in\setK'_2$, are pairwise disjoint, and
$$\N{m}\setminus \bigcup_{k\in \setK'_1}R_{k} =   \bigcup_{k\in \setK'_2}R_{k}  . $$
Let $\overline{\setK}=\setK'_1\cup\setK'_2$ and note that the $R_{k}$, $k\in \overline{\setK}$, form a partition of $\N{m}$. To conclude the proof of~\eqref{eq:minpartition}, we write that
\begin{equation*}
\begin{split}
& \sum_{k\in\setK'} \zeta_{k}\wedge|S\cap R_{k}| +\left|S\setminus\bigcup_{ k\in\setK' } R_{k}\right|  = \\
& \sum_{k\in\setK'} \zeta_{k}\wedge|S\cap R_{k}| +\left|S\cap\left( \N{m} \setminus \bigcup_{k\in \setK'_1}R_{k}   \right)\right| \geq\\ %+ \left|R\setminus I_{1,\Lambda}\right| \geq   \\
& \sum_{k\in\setK'_1} \zeta_{k}\wedge|S\cap R_{k}| +\sum_{k\in\setK'_2}\left|S\cap R_{k}  \right|  \geq\\%+ \left|R\setminus I_{1,\Lambda}\right| \geq   \\
& \sum_{k\in\setK'_1} \zeta_{k}\wedge|S\cap R_{k}| +\sum_{k\in\setK'_2} \zeta_{k}\wedge\left|S\cap R_{k}  \right| =% + \left|R\setminus I_{1,\Lambda}\right| =   \\
  \sum_{k\in\overline{\setK}} \zeta_{k}\wedge|S\cap R_{k}| .\qedhere%+ \left|R\setminus I_{1,\Lambda}\right|  .
\end{split}
\end{equation*}
 \end{proof}

The last lemma is useful for the general case where~\eqref{eq:atom} no longer holds, by making use of the completed Forest structure introduced in Definition~\ref{def:complet}.
\begin{lemma}
 For a reference family $\Rfam=(R_k,\zeta_k)_{k\in\setK}$ that has a Forest structure, and $\setK^+$, $\setK^\oplus$, $\Rfam^\oplus$ as in Definition~\ref{def:complet}, we have for all $S\subset\Nm$:
 $$ V^*_{\Rfam^\oplus}(S)=V^*_{\Rfam}(S), $$
 $$ \wt{V}_{\Rfam^\oplus}(S) =\Vtilde(S). $$
 \label{lm:completed}
 \end{lemma}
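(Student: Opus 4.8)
The plan is to prove the two equalities in Lemma~\ref{lm:completed} separately, but by the same underlying observation: passing from $\Rfam$ to $\Rfam^\oplus$ only adds constraints of the form ``$|R_{i,i}\cap A|\le \zeta_{i,i}=|P_i|$'', which are vacuous since $R_{i,i}=P_i$ and $|P_i\cap A|\le |P_i|$ always holds.

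\medskip

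\textbf{Equality $V^*_{\Rfam^\oplus}(S)=V^*_{\Rfam}(S)$.} Recall $V^*_\Rfam(S)=\max\{|S\cap A|:A\in\cA(\Rfam)\}$ with $\cA(\Rfam)$ as in \eqref{eq:mathcalA}. First I would observe that $\cA(\Rfam^\oplus)\subset\cA(\Rfam)$ since $\setK\subset\setK^\oplus$, which immediately gives $V^*_{\Rfam^\oplus}(S)\le V^*_{\Rfam}(S)$. For the reverse inclusion, I would show $\cA(\Rfam)\subset\cA(\Rfam^\oplus)$: take $A\in\cA(\Rfam)$; then for every $(i,j)\in\setK$ we have $|\Rf{i}{j}\cap A|\le\zeta_{i,j}$, and for every $(i,i)\in\setK^+$ we have $|\Rf{i}{i}\cap A|=|P_i\cap A|\le |P_i|=\zeta_{i,i}$ by definition of $\zeta_{i,i}$ in Definition~\ref{def:complet}. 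Hence $A\in\cA(\Rfam^\oplus)$, the two feasible sets coincide, and the two optima agree.

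\medskip

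\textbf{Equality $\wt V_{\Rfam^\oplus}(S)=\Vtilde(S)$.} Here I would use the reformulation \eqref{eq:reformuler}, namely $\Vtilde(S)=\min_{\setK'\subset\setK}\big(\sum_{k\in\setK'}\zeta_k\wedge|S\cap R_k|+|S\setminus\bigcup_{k\in\setK'}R_k|\big)$ (and likewise for $\Rfam^\oplus$ over $\setK'\subset\setK^\oplus$). Since $\setK\subset\setK^\oplus$, the minimum for $\Rfam^\oplus$ is over a larger index pool, so $\wt V_{\Rfam^\oplus}(S)\le\Vtilde(S)$. For the reverse, fix $\setK'\subset\setK^\oplus$ achieving the minimum for $\Rfam^\oplus$ and write $\setK'=\setK'_0\cup\setK'_+$ with $\setK'_0=\setK'\cap\setK$ and $\setK'_+=\setK'\cap\setK^+$. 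For each $(i,i)\in\setK'_+$ we have $\zeta_{i,i}\wedge|S\cap\Rf{i}{i}|=|P_i|\wedge|S\cap P_i|=|S\cap P_i|$, so the contribution of these atoms to the objective is exactly $\sum_{(i,i)\in\setK'_+}|S\cap P_i|=|S\cap\bigcup_{(i,i)\in\setK'_+}P_i|$ (a disjoint union of atoms). Now replace the term $|S\setminus\bigcup_{k\in\setK'}R_k|+\sum_{(i,i)\in\setK'_+}|S\cap P_i|$ by the single term $|S\setminus\bigcup_{k\in\setK'_0}R_k|$: since the $P_i$ with $(i,i)\in\setK'_+$ are each either contained in some $R_k$, $k\in\setK'_0$, or disjoint from all of them, moving them out of the ``leftover'' term and into explicit atomic terms does not change the total (this is just the additivity $|S\cap X|=|S\cap(X\cap Y)|+|S\cap(X\setminus Y)|$). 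This exhibits $\setK'_0\subset\setK$ with objective value $\le\wt V_{\Rfam^\oplus}(S)$, hence $\Vtilde(S)\le\wt V_{\Rfam^\oplus}(S)$, completing the proof.

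\medskip

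The main obstacle I anticipate is purely bookkeeping in the second part: carefully tracking which atoms $P_i$ (for $(i,i)\in\setK'_+$) are swallowed by sets in $\setK'_0$ versus which lie entirely in the complement $S\setminus\bigcup_{k\in\setK'_0}R_k$, so that the reshuffling of terms is seen to be an exact identity rather than merely an inequality. The forest structure \eqref{eq:Forest} guarantees the needed dichotomy (each atom is nested in or disjoint from each $R_k$), so no case is genuinely hard, but the indexing must be written out cleanly.
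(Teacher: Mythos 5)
Your proof is correct and takes essentially the same route as the paper's: both equalities rest on the observation that the added constraints $\zeta_{i,i}=|P_i|$ are vacuous, and the second is handled exactly as in the paper by splitting $\setK'\subset\setK^\oplus$ into $\setK'\cap\setK$ and $\setK'\cap\setK^+$ and absorbing the atomic terms into the leftover term. One small correction: the reshuffling is in general only an inequality, not the exact identity you aim for (an atom $P_i$ with $(i,i)\in\setK'_+$ that is already contained in some $R_k$, $k\in\setK'_0$, is double-counted on the $\setK'$ side), but the inequality points in the direction you need, so the argument stands.
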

 \begin{proof}
It is trivial that $\mathcal{A}\left(\Rfam\right)=\mathcal{A}\left(\mathfrak{R^\oplus}\right)$ (see~\eqref{eq:mathcalA}) because $\zeta_{k}=|R_{k}|$ for $k\in\setK^+$, hence $V^*_{\Rfam^\oplus}(S)=V^*_{\Rfam}(S)$. It is also obvious that $\Vtilde(S)\geq \wt{V}_{\Rfam^\oplus}(S) $ by~\eqref{eq:reformuler} and since $\setK\subset\setK^\oplus$.
Now let any $\setK'\subset\setK^\oplus$. Let $\setK'_1=\setK'\cap\setK$ and $\setK'_2=\setK'\cap\setK^+$. Note that $\setK'$ is the disjoint union of $\setK'_1$ and $\setK'_2$. Then,
\begin{align*}
\sum_{k\in\setK'} \zeta_{k}\wedge|S\cap R_{k}| +\left|S\setminus\bigcup_{ k\in\setK' } R_{k}\right|
 %&= \sum_{(i,j)\in\setK'_1} \zeta_{i,j}\wedge|R\cap R_{i,j}| + \sum_{(i,j)\in\setK'_2} |R_{i,j}|\wedge|R\cap R_{i,j}| +\left|R\setminus\bigcup_{ (i,j)\in\setK' } R_{i,j}\right|  \\%\left|R\cap \comp{\left(\bigcup_{ (i,j)\in\setK' } R_{i,j}\right)}\right| \\
&= \sum_{k\in\setK'_1} \zeta_{k}\wedge|S\cap R_{k}| + \sum_{k\in\setK'_2} |S\cap R_{k}| +\left|S\setminus\bigcup_{ k\in\setK' } R_{k}\right|  \\ %\left|R\cap \comp{\left(\bigcup_{ (i,j)\in\setK' } R_{i,j}\right)}\right| \\
%&\geq \sum_{(i,j)\in\setK'_1} \zeta_{i,j}\wedge|R\cap R_{i,j}| + \left|R\cap \bigcup_{(i,j)\in\setK'_2} R_{i,j} \right| +\left|R\cap \comp{\left(\bigcup_{ (i,j)\in\setK' } R_{i,j}\right)}\right| \\
%&\geq \sum_{(i,j)\in\setK'_1} \zeta_{i,j}\wedge|R\cap R_{i,j}| + \left|R\cap \left( \left( \bigcup_{(i,j)\in\setK'_2} R_{i,j} \right) \cup \comp{\left(\bigcup_{ (i,j)\in\setK' } R_{i,j}\right)}   \right)  \right| \\
%&\quad= \sum_{(i,j)\in\setK'_1} \zeta_{i,j}\wedge|R\cap R_{i,j}| + \left|R\cap \left( \left( \bigcup_{(i,j)\in\setK'_2} R_{i,j} \right) \cup \comp{\left(\bigcup_{ (i,j)\in\setK'_1 } R_{i,j}\right)}   \right)  \right| \\
&\geq \sum_{k\in\setK'_1} \zeta_{k}\wedge|S\cap R_{k}| + \Bigg|S\setminus   \bigcup_{ k\in\setK'_1 } R_{k}  \Bigg| \\
&\geq \Vtilde(S),
\end{align*}
%where the equality in the fifth line comes from $A\cup(\comp{B}\cap\comp{A})=A\cup \comp{B}$ for two subsets $A$ and $B$.
because $\zeta_{k}=|R_{k}|$ for $k\in\setK'_2$. Hence $ \wt{V}_{\Rfam^\oplus}(S) \geq\Vtilde(S) $, which concludes the proof.
 \end{proof}

\section{Material for Lemma~\ref{lm:partition}}\label{app:sec:leaves}

Algorithm~\ref{algo:partition} below builds $(P_n)$ and follows directly from the proof. It may be useful for the reader to start by looking the algorithm, in order to get a sense of what the formal proof does.

\begin{proof}[Proof of Lemma~\ref{lm:partition}]
Let $H=\max_{k\in\setK}\h(k)$, where $\phi$ is the depth function defined by~\eqref{eq:depth}. We use a recursion to build, for each $1\leq h\leq H$, an integer $N^h\geq1$ and a partition $P^h=(P^h_n)_{1\leq n\leq N^h}$ which satisfy the following three properties:
\begin{align}
    &P^h\text{ is a partition of }\Nm, \tag{$\mathscr{P}_1^h$}\\
    &\forall k \in\setK \text{ such that } \h(k) < h, \exists (i,j)\in\set{1,\dotsc,N^h}^2  : R_k=\bigcup_{i\leq n \leq j} P_n^h, \tag{$\mathscr{P}_2^h$}\\
    &\forall k\in \setK \text{ such that } \h(k) = h, \exists
    n\in\set{1,\dotsc,N^h} : R_k=P_n^h.\tag{$\mathscr{P}_3^h$}
\end{align}
We start the recursion with $h=1$. Let $Succ_{1}=\left\{ k\in\setK : \h(k)=1 \right\}$,
$$New_{1}=\left\{R_k : k\in  Succ_{1} \right\}\cup\left\{\Nm\setminus\bigcup_{k\in  Succ_{1} }R_k  \right\}\setminus\{\varnothing\} , $$
and $N^1=\left|New_{1}\right|$. We let $P^1$ be the family of elements of $New_1$. $(\mathscr{P}_1^1)$ is true because, by~\eqref{eq:Forest}, for $k,\tk\in Succ_1$, $k\neq k'$, $R_k$ and $R_\tk$ are disjoint (otherwise they can't have same depth). $(\mathscr{P}_2^1)$ and $(\mathscr{P}_3^1)$ are trivially true.

Now let $h\in\{2,\dotsc,H\}$ and assume that there exists $N^{h-1}$ and $P^{h-1}$ satisfying $(\mathscr{P}_1^{h-1})$, $(\mathscr{P}_2^{h-1})$ and $(\mathscr{P}_3^{h-1})$. For all $n\in\{1,\dotsc,N^{h-1}\}$, let $$Succ_{h,n}=\left\{ k\in\setK : \h(k)=h \text{ and } R_k\subset P_n^{h-1} \right\},
$$$$ New_{h,n}= \left\{R_k : k\in  Succ_{h,n} \right\}\cup\left\{ P_n^{h-1} \setminus\bigcup_{k\in  Succ_{h,n} }R_k  \right\}\setminus\{\varnothing\} ,$$
$\mathbb{S}^h_n=\sum_{n'=0}^n\left|New_{h,n'}\right|$ (with $\left|New_{h,0}\right|=0$ by convention), and $\left(P^h_{\mathbb{S}^h_{n-1}+1},\dotsc, P^h_{\mathbb{S}^h_n}\right)$ be the family of the elements of $New_{h,n}$. Then let $N^h=\mathbb{S}^h_{N^{h-1}}$ and $P^h=(P^h_1,\dotsc,P^h_{N^h})$. Note that for each $1\leq n\leq N^{h-1}$, $P^{h-1}_n$ is the disjoint union of $P^h_{\mathbb{S}^h_{n-1}+1},\dotsc, P^h_{\mathbb{S}^h_n}$, because by~\eqref{eq:Forest}, for $k,\tk\in Succ_{h,n}$, $k\neq k'$, $R_k$ and $R_\tk$ are disjoint (otherwise they can't have same depth). This and $(\mathscr{P}_1^{h-1})$ imply $(\mathscr{P}_1^{h})$. Let $k\in\setK$ such that $\h(k)<h$, then $(\mathscr{P}_2^{h-1})$ and $(\mathscr{P}_3^{h-1})$ imply that there exists $(i,j)\in\{1,\dotsc,N^{h-1}\}^2$ such that $R_k=\bigcup_{i\leq n \leq j} P_n^{h-1}$. Hence
$$R_k=\bigcup_{ \mathbb{S}^{h-1}_{i-1}+1    \leq n \leq \mathbb{S}^{h-1}_{j}} P_n^{h}   ,$$
and we get $(\mathscr{P}_2^h)$. Finally let $k\in\setK$ such that $\h(k)=h$. Let $\tk$ be the unique element of $\setK$ such that $\h(\tk)=h-1$ and $R_k\subsetneq R_\tk$. By $(\mathscr{P}_3^{h-1})$, there exists $n\in\{1,\dotsc,N^{h-1}\}$ such that $R_\tk=P_n^{h-1}$. Hence $k\in Succ_{h,n}$ and $R_k$ is equal to one of the elements of $New_{h,n}$, which gives us $(\mathscr{P}_3^{h})$.

Now that the recursion has ended, properties $(\mathscr{P}_1^H)$, $(\mathscr{P}_2^H)$ and $(\mathscr{P}_3^H)$ imply the existence of the desired partition. The proof of the converse statement is straightforward from~\eqref{eq:Rij}.
\end{proof}

For the purpose of Algorithm~\ref{algo:partition}, we let $\len$ and $\con$ be the concatenation and length functions such that, for all $S_1,\dotsc,S_n,S_{n+1}\subset \Nm$ and $S=(S_1,\dotsc,S_n)$, $\len(S)=n$, $\con(S,S_{n+1})=(S_1,\dotsc,S_n,S_{n+1})$ if $S_{n+1} \neq \varnothing$ and  $\con(S,\varnothing) = S$.

%\IncMargin{1em}
\begin{algorithm}
%\DontPrintSemicolon
\KwData{$\Rfam=(R_{k},\zeta_{k})_{k\in\setK}$ satisfying~\eqref{eq:Forest}.}
%\KwData{$S\subset \Nm$.}
\KwResult{$P=(P_n)_{1\leq n \leq N}$ such that for each $k\in\setK$, there exists some $(i,j)$ such that $R_k=\bigcup_{i\leq n\leq j}P_n$.}
%\Begin{
$ P \longleftarrow (\Nm)  $\;
$N \longleftarrow 1$\;
$ H \longleftarrow \max_{k\in\setK} \h(k)  $\;
\For{$h\in (1,\dotsc,H) $}{
$newP\longleftarrow ()$\;
	\For{$n \in  \{1,\dotsc,N\}$}{
		$Succ_{h,n} \longleftarrow \{ k \in  \setK : R_{k}\subset P_n, \h(k)=h\}$\;
%		\If{$Succ_{h,n}\neq \varnothing $  }{
			\For{$k \in  Succ_{h,n}  $}{
			$newP\longleftarrow \con(newP,R_k)  $\;
			}
%		}
%		\If{$ P_n \setminus\bigcup_{ k \in  Succ_{h,n} }R_k \neq\varnothing $}{
		$newP\longleftarrow \con\left(P_n \setminus\bigcup_{ k \in  Succ_{h,n} }R_k,newP\right)  $\;
%		}
	}
$P\longleftarrow newP$\;
$N\longleftarrow \len(P)$\;
}
%}
\Return $P$
\caption[Computation of \texorpdfstring{$(P_n)_{1\leq n \leq N}$}{(Pn)}]{Computation of $(P_n)_{1\leq n \leq N}$}
\label{algo:partition}
\end{algorithm}

%\DecMargin{1em}

\checknbdrafts
\end{document}